\newtheorem{theorem}{Theorem}[section]
\newtheorem{cor}[theorem]{Corollary}
\newtheorem{prop}[theorem]{Proposition}
\newtheorem{lemma}[theorem]{Lemma}
\newcommand{\qed}{\qquad$\Box$}
\newenvironment{proof}{\prepf\rm}{\endprepf}
\newcommand{\Sym}{\mathop{\mathrm{Sym}}}
\newcommand{\JI}{\mathop{\mathrm{JI}}}
\newcommand{\im}{\mathop{\mathrm{im}}}
\newcommand{\id}{\mathop{\mathrm{id}}}
\newcommand{\blobb}{\circle*{1}}
\newcommand{\blob}{\circle{1}}
\newcommand{\bloz}{\circle{2}}
\begin{document}

\title{Permutation groups, partition lattices and block structures}
\author{Marina Anagnostopoulou-Merkouri,\\
\small{School of Mathematics, University of Bristol}\\
R. A. Bailey and Peter J. Cameron\\
\small{School of Mathematics and Statistics, University of St Andrews}}
\date{}
\maketitle

\begin{abstract}
Let $G$ be a finite transitive permutation group on $\Omega$. The $G$-invariant
partitions form a sublattice of the lattice of all partitions of $\Omega$,
having the further property that all its elements are uniform (that is, have
all parts of the same size). If, in addition, all the equivalence relations
defining the partitions commute, then the relations form an
\emph{orthogonal block structure}, a concept from statistics; in this case
the lattice is modular. If it is distributive, then we have a \emph{poset
block structure}, whose automorphism group is a \emph{generalised wreath
product}. We examine permutation groups with these properties, which we
call the \emph{OB property} and \emph{PB property} respectively, and in
particular investigate when direct and wreath products of groups with these
properties also have these properties.

A famous theorem on permutation groups asserts that a transitive
imprimitive group $G$
is embeddable in the wreath product of two factors obtained from the group
(the group induced on a block by its setwise stabiliser, and the group induced
on the set of blocks by~$G$). We extend this theorem to groups with the PB
property, embedding them into generalised wreath products. We show that
the map from posets to generalised wreath products preserves intersections
and inclusions.

We have included background and historical material on these concepts.

MSC: 20B05, 06B99, 62K10

Keywords: permutation group, partition lattice, orthogonal block structure,
experimental design, modular lattice, distributive lattice, commuting 
equivalence relations.
\end{abstract}

\section{Introduction}
\label{s:intro}

Let $G$ be a finite transitive permutation group on a finite set $\Omega$. Then the
$G$-invariant partitions of $\Omega$ form a sublattice of the lattice of all
partitions of $\Omega$ (ordered by refinement). The $G$-invariant partitions
have the additional property that they are \emph{uniform} (all parts have
the same size).

In this paper all permutation groups will be finite and we are primarily interested in the class of permutation groups
for which the equivalence relations corresponding to the $G$-invariant
partitions commute pairwise. (We will see in Section~\ref{s:comput} that,
at least among transitive groups of small degree, the vast majority do satisfy
this condition; for example, $1886$ of the $1954$ transitive groups of
degree~$16$ do so.) Then  the lattice of partitions which they form is called
an \emph{orthogonal block structure}, for short an OBS.
This property can also be defined by saying that
the subgroups containing a point stabiliser $G_\alpha$ commute pairwise.
This implies that the lattice satisfies the \emph{modular law}. It
turns out that this property of a partition lattice was introduced, in the
context of statistical design, by several different statisticians: see
Section~\ref{sec:hist}.

An orthogonal block structure gives rise, by an inclusion-exclusion argument,
to an association scheme on $\Omega$; we also explain this and its relevance
to the study of permutation groups.

A more restrictive property requires that the lattice satisfies the
\emph{distributive law}. These structures are known, in the statistical
context, as \emph{poset block structures}.
These are explained in Section~\ref{sec:hist}.  The simplest non-trivial cases
are
(i)~a single non-trivial uniform partition 
and (ii)~the rows and columns of  a rectangle.
These correspond to the imprimitive wreath product and the transitive direct
product of two permutation groups.

This is related to an earlier permutation group construction, the so-called
\emph{generalised wreath product}. This takes as input data a partially
ordered set $M$ having a transitive permutation group associated with each of
its elements, and produces a product which generalises both direct and wreath
product (the cases where the poset is a $2$-element antichain or $2$-element
chain respectively). The \emph{Krasner--Kaloujnine theorem},
a well-known theorem in permutation group theory, describes the embedding
of a transitive but imprimitive permutation group in a wreath prduct; we
generalise this to embed a group whose invariant partitions form a poset
block structure into a generalised wreath product over the poset.

We say that a transitive group $G$ has the \emph{OB property} (respectively
\emph{PB property}) if the $G$-invariant partitions form an orthogonal block
structure (respectively a poset block structure). We investigate some
properties of
these groups, including their behaviour under direct and wreath products, and
characterise the regular groups with the OB property (using a theorem of
Iwasawa).

A summary of the paper follows. In Section~\ref{sec:partlat}, we give precise
definitions of orthogonal and poset block structures and the generalised
wreath product of a family of permutation groups indexed by a poset.
Section~\ref{sec:hist} describes the history of these block structures
in experimental design in statistics. Section~\ref{s:pg} contains our
main results on permutation groups. We give somewhat informal descriptions
here, since precise statements depend on the notions of generalised wreath
product and the OB and PB properties.
\begin{enumerate}
\item We show that a generalised wreath product of primitive permutation groups
is pre-primitive and has the OB property, and we give a necessary and
sufficient condition for it to have the PB property: the obstruction is the
existence of incomparable elements in the poset whose associated groups are
cyclic of the same prime order (Theorem~\ref{thm-gwpPP}).
\item We show that a transitive group $G$ which acts on a poset block structure
(in particular, a transitive group with the PB property) can be embedded
in a generalised wreath product, where the factors in the product can be
defined in terms of the action of $G$ (Theorem~\ref{t:embed}).
\item The map from posets on the index set to generalised wreath products of
families of groups preserves intersections and inclusions, where for a poset
these refer to the set of ordered pairs comprising the relation.  In 
particular, a generalised wreath product is the intersection of the iterated
wreath products over all linear extensions of the poset
(Theorem~\ref{t:poset2gwp}, Corollary~\ref{c:linext}).
\end{enumerate}
We also examine the behaviour of OB and PB under direct and wreath product.

The final Section~\ref{s:misc} describes some computational issues and gives
some open problems.

Since the paper crosses boundaries between permutation groups, lattice theory,
and statistical design, we have given some introductory material on these
topics (Section~\ref{sec:partlat}), as well as an account of the somewhat tangled history
of their occurrence in statistics (Section~\ref{sec:hist}).

\vspace{3mm}

\noindent \textbf{Acknowledgements.} We are grateful to Michael Kinyon for drawing our attention to the paper~\cite{yan}. We thank the anonymous referees for a careful reading of the paper and all the useful comments and suggestions that have greatly improved the paper. The first author thanks the School of Mathematics and Statistics of the University of St Andrews for supporting part of this work with an undergraduate StARIS scholarship. The rest of the work was undertaken during the first author's doctoral studies at the University of Bristol and she thanks the Heilbronn Institute for Mathematical Research for the financial support.

\section{Lattices of Partitions}
\label{sec:partlat}
\subsection{Partitions}

Let $\Omega$ be a finite set. The set of all partitions of $\Omega$ is
partially ordered by refinement: $\Pi_1\preccurlyeq\Pi_2$ if each part of
$\Pi_1$ is contained in a part of $\Pi_2$. With this order, the partitions
form a lattice (a partially ordered set in which any two elements have a
greatest lower bound or \textit{meet}, and a least upper bound or \textit{join}): the meet
(also called \textit{infimum})
$\Pi_1\wedge\Pi_2$ is the partition whose parts are
all non-empty intersections of parts of $\Pi_1$ and $\Pi_2$, and the join
(also called \textit{supremum})
$\Pi_1\vee\Pi_2$ is the partition in which the part containing $\alpha$
consists of all points of $\Omega$ that can be reached from $\alpha$ by
moving alternately within a part of $\Pi_1$ and within a part of $\Pi_2$.

Partitions can be considered also as equivalence relations. The composition
$R_1\circ R_2$ of two relations $R_1$ and $R_2$ is the relation in which
$\alpha$ and $\beta$ are related if and only if there exists $\gamma$ with
$(\alpha,\gamma)\in R_1$ and $(\gamma,\beta)\in R_2$.

In view of the natural correspondence between partitions and equivalence
relations, we abuse notation by talking about the join $R_1\vee R_2$ of
two equivalence relations, or the composition $\Pi_1\circ\Pi_2$ of two
partitions.

\begin{prop}
$R_1\circ R_2=R_1\vee R_2$ if and only if $R_1\circ R_2=R_2\circ R_1$.
\end{prop}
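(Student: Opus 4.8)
The plan is to prove both implications by manipulating compositions of relations directly, using two elementary facts: that an equivalence relation $R$ satisfies $R\circ R=R$ and $R^{-1}=R$, and that $R_1\vee R_2$ is the smallest equivalence relation containing $R_1\cup R_2$. I would state this characterisation of the join explicitly at the outset, since both directions lean on it.

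For the ``if'' direction, assume $R_1\circ R_2=R_2\circ R_1$, and set $S:=R_1\circ R_2$. First I would check that $S$ is an equivalence relation. Reflexivity is immediate since each $R_i$ is reflexive. Symmetry follows from $S^{-1}=(R_1\circ R_2)^{-1}=R_2^{-1}\circ R_1^{-1}=R_2\circ R_1=R_1\circ R_2=S$. Transitivity is the one computation with any content: $S\circ S=R_1\circ(R_2\circ R_1)\circ R_2=R_1\circ(R_1\circ R_2)\circ R_2=(R_1\circ R_1)\circ(R_2\circ R_2)=R_1\circ R_2=S$, using the commutation hypothesis in the middle and idempotence of $R_1$ and $R_2$ at the end. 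Since $R_2$ is reflexive we have $R_1\subseteq S$, and likewise $R_2\subseteq S$, so $S$ is an equivalence relation containing $R_1\cup R_2$; hence $R_1\vee R_2\subseteq S$. Conversely $S=R_1\circ R_2\subseteq(R_1\vee R_2)\circ(R_1\vee R_2)=R_1\vee R_2$, so $S=R_1\vee R_2$.

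For the ``only if'' direction, assume $R_1\circ R_2=R_1\vee R_2$. Then $R_1\circ R_2$ is an equivalence relation, in particular symmetric, so $R_1\circ R_2=(R_1\circ R_2)^{-1}=R_2^{-1}\circ R_1^{-1}=R_2\circ R_1$, as required.

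The argument is short and essentially obstacle-free; the only place where something must genuinely be verified rather than read off is the transitivity computation in the first direction, and even there it collapses to a single line once one recalls that equivalence relations are idempotent under composition. If I wanted to be thorough I would spell out why $R_1\circ R_2\subseteq(R_1\vee R_2)\circ(R_1\vee R_2)$ (monotonicity of composition) and why the latter equals $R_1\vee R_2$ (idempotence again), but these are routine.
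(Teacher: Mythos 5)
Your proof is correct, and the forward direction takes a genuinely different route from the paper. The paper works directly with its definition of the join via alternating paths: it takes a path witnessing $(\alpha,\beta)\in R_1\vee R_2$ and repeatedly shortens three consecutive steps $R_1,R_2,R_1$ to two using the commutation hypothesis, until the path has length two, giving $R_1\vee R_2\subseteq R_1\circ R_2$. You instead use the lattice-theoretic characterisation of the join as the smallest equivalence relation containing $R_1\cup R_2$, and verify algebraically (via $R^{-1}=R$, $R\circ R=R$, associativity, and the commutation hypothesis in the transitivity computation) that $R_1\circ R_2$ is itself an equivalence relation containing both, so it contains the join; the reverse inclusion follows from monotonicity and idempotence. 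Your approach is cleaner to write and avoids path bookkeeping, but it quietly relies on the equivalence between the paper's path definition of $\vee$ and the ``smallest equivalence relation containing the union'' description — a standard fact, though worth a sentence if you want the argument self-contained relative to the paper's definitions. The ``only if'' direction (symmetry of $R_1\circ R_2$ forces $R_1\circ R_2=R_2\circ R_1$) is the same in both proofs.
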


\begin{proof}
Clearly $R_1\circ R_2\subseteq R_1\vee R_2$.

Suppose that $R_1\circ R_2=R_2\circ R_1$. If $\alpha$ and $\beta$ lie in the
same part of $R_1\vee R_2$, then there is a path joining them, whose edges
lie alternately in the same part of $R_1$ and of $R_2$. But any three
consecutive steps $(\alpha_1,\alpha_2,\alpha_3,\alpha_4)$ with
$(\alpha_1,\alpha_2),(\alpha_3,\alpha_4)\in R_1$ and $(\alpha_2,\alpha_3)\in R_2$ can be
shortened to two steps: for there exists $\beta'$ with $(\alpha_1,\beta')\in R_2$
and $(\beta',\alpha_3)\in R_1$; then $(\beta',\alpha_4)\in R_1$ by transitivity.
So $R_1\vee R_2=R_1\circ R_2$.

Conversely, suppose that $R_1\circ R_2=R_1\vee R_2$. Then $R_1\circ R_2$ is
symmetric, so it is equal to $R_2\circ R_1$.\qed
\end{proof}

This result was first proved in~\cite{ddj}.

\subsection{Lattices}

A finite lattice is conveniently represented by its \emph{Hasse diagram}:
this is the plane diagram with a dot for each lattice element; if
$a\prec b$ then $b$ is higher than $a$ in the plane; and if $b$ covers $a$
(that is, $a\prec b$ but there is no element~$c$ with $a\prec c\prec b$), then
an edge joins $a$ to $b$.

In a lattice, the \emph{modular law} states that
\[a\preccurlyeq c \hbox{ implies }a\vee(b\wedge c)=(a\vee b)\wedge c.\]
A lattice $L$ is \emph{modular} if this holds for all $a,b,c\in L$.

\begin{prop}\label{prop:OBS-modular}
  In a lattice of partitions, if every pair of 
  partitions commute, then the lattice is modular.
\label{p:modular}
\end{prop}

\begin{proof}
We are required to prove that $\Phi\preccurlyeq \Psi$ implies
$\Phi\vee(\Xi\wedge \Psi)=(\Phi\vee \Xi)\wedge \Psi$.
In Figure~\ref{f:mod}, the dots represent points
in~$\Omega$.  Each edge is labelled by a partition of $\Omega$. If an edge labelled~$\Phi$
joins points $\alpha$ and $\beta$, this means that $\alpha$ and $\beta$ are in the same
part of~$\Phi$;
and similarly for $\Xi$ and $\Psi$.

\begin{figure}[htbp]
\begin{center}
\setlength{\unitlength}{1mm}
\begin{picture}(80,20)
\multiput(0,10)(10,0){3}{\circle*{2}}
\multiput(50,10)(20,0){2}{\circle*{2}}
\put(60,5){\circle*{2}}
\put(0,10){\line(1,0){10}}\put(4,11){$\Phi$}
\put(10,11){\line(1,0){10}}\put(14,12){$\Xi$}
\put(10,9){\line(1,0){10}}\put(14,6){$\Psi$}
\put(50,10){\line(1,0){20}}\put(59,11){$\Psi$}
\put(50,10){\line(2,-1){10}}\put(54,4){$\Phi$}
\put(60,5){\line(2,1){10}}\put(64,4){$\Xi$}
\put(-2,10){\makebox(0,0)[r]{$\alpha$}}
\put(22,10){\makebox(0,0)[l]{$\gamma$}}
\put(9,9){\makebox(0,0)[rt]{$\beta$}}
\put(48,10){\makebox(0,0)[r]{$\zeta$}}
\put(72,10){\makebox(0,0)[l]{$\theta$}}
\put(60,3){\makebox(0,0)[t]{$\eta$}}
\end{picture}
\end{center}
\caption{\label{f:mod}The modular law for commuting partitions}
\end{figure}

Since $\Phi\preccurlyeq\Psi$, any $\Phi$-$\Psi$ path can be replaced by a single
$\Psi$ edge. So, considering the paths from $\alpha$ to $\gamma$ in the diagram on
the left shows that
$\Phi\vee(\Xi\wedge \Psi)\preccurlyeq(\Phi\vee \Xi)\wedge \Psi$.
Also, on the right, the $\Psi$-$\Phi$ path from $\theta$ to $\eta$
implies that there is a $\Psi$ edge between them. Thus there is a $\Xi \wedge \Psi$ path from
$\theta$ to $\eta$, and hence a $(\Xi \wedge \Psi)\vee\Phi$ path from $\theta$ to $\zeta$:
this gives the reverse inequality.
\qed
\end{proof}

Proposition~\ref{prop:OBS-modular} is Theorem 9.11 in~\cite{bailey:as} and Proposition 8 in~\cite{Rota1996}.

\vspace{3mm}

A lattice is \emph{distributive} if it satisfies the conditions
\begin{eqnarray*}
(a\vee b)\wedge c &=& (a\wedge c)\vee(b\wedge c),\\
(a\wedge b)\vee c &=& (a\vee c)\wedge(b\vee c),
\end{eqnarray*}
for all $a,b,c$.

\begin{prop}\label{prop:dist-laws}
\begin{enumerate}
\item Each of the two distributive laws implies the other.
\item A distributive lattice is modular.
\end{enumerate}
\label{p:dist-l}
\end{prop}

\begin{proof}
(a) Suppose that the first law above holds. Then
\begin{eqnarray*}
(a\vee c)\wedge(b\vee c) &=& ((a\vee c)\wedge b)\vee((a\vee c)\wedge c)\\
&=&(a\wedge b)\vee(c\wedge b)\vee c\\
&=&(a\wedge b)\vee c.
\end{eqnarray*}
The proof of the other implication is similar.

\medskip

(b) Suppose that $L$ is
distributive and let $a,b,c\in L$ with $a\preccurlyeq c$. Then
\[a\vee(b\wedge c) = (a\vee b)\wedge(a\vee c)=(a\vee b)\wedge c,\]
since $a\preccurlyeq c$ implies $a\vee c=c$.\qed
\end{proof}

Proposition~\ref{prop:dist-laws} is a standard result in lattice theory and appears in~\cite{dp} as Lemmas 4.2 and 4.3.

\medskip

The fundamental theorem on distributive lattices states that
every finite distributive lattice is isomorphic to a sublattice of the
\emph{Boolean lattice} of all subsets of a finite set. More precisely, a
\emph{down-set} in a partially ordered set $(M,{\sqsubseteq})$ is a subset $D$
of $M$ with the property that, if $m\in D$ and $m'\sqsubseteq m$, then
$m'\in D$. The down-sets form a lattice under the operations of intersection
and union.

\begin{theorem}
A finite distributive lattice $L$ is isomorphic to the lattice of down-sets in
a partially ordered set $M$. We can take $M$ to be the set of
join-indecomposable elements of $L$ (elements $m$ for which $m=m_1\vee m_2$
implies $m=m_1$ or $m=m_2$).
\label{t:ftdl}
\end{theorem}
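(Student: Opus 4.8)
The plan is to exhibit an explicit isomorphism between $L$ and the lattice $\mathcal{O}(M)$ of down-sets of the poset $M$ of join-indecomposable elements of $L$ (ordered by the restriction of $\preccurlyeq$; here we do not count the bottom element $\hat{0}=\bigvee\emptyset$ as join-indecomposable, since the empty set must be allowed as a down-set). The candidate map is $\phi\colon L\to\mathcal{O}(M)$ defined by $\phi(a)=\{m\in M:m\preccurlyeq a\}$, which is visibly a down-set of $M$ and is order-preserving; I would construct its inverse as $D\mapsto\bigvee D$.

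The first step, which uses no distributivity, is to show that $a=\bigvee\phi(a)$ for every $a\in L$: every element is the join of the join-indecomposables lying below it. This goes by induction on $|{\downarrow}a|$, where ${\downarrow}a=\{x\in L:x\preccurlyeq a\}$. If $a=\hat{0}$ or $a$ is join-indecomposable the claim is immediate; otherwise $a=b\vee c$ with $b,c\prec a$, so that ${\downarrow}b,{\downarrow}c\subsetneq{\downarrow}a$ and, by the inductive hypothesis together with $\phi(b),\phi(c)\subseteq\phi(a)$, we get $a=b\vee c\preccurlyeq\bigvee\phi(a)\preccurlyeq a$. In particular $\phi$ is injective, since $\phi(a)=\phi(b)$ forces $a=\bigvee\phi(a)=\bigvee\phi(b)=b$. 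It also follows that $\phi$ reflects order: $\phi(a)\subseteq\phi(b)$ gives $a=\bigvee\phi(a)\preccurlyeq\bigvee\phi(b)=b$.

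The crux is surjectivity, and this is exactly where the distributive law enters. Given a down-set $D=\{d_1,\dots,d_k\}$ of $M$, I would verify that $\phi(\bigvee D)=D$. The inclusion $D\subseteq\phi(\bigvee D)$ is trivial; for the reverse, let $m\in M$ with $m\preccurlyeq d_1\vee\cdots\vee d_k$. Then $m=m\wedge(d_1\vee\cdots\vee d_k)=(m\wedge d_1)\vee\cdots\vee(m\wedge d_k)$ by (the iterated form of) distributivity, and since $m$ is join-indecomposable — a property that passes from binary to finite joins by an obvious induction — we get $m=m\wedge d_i$ for some $i$, so $m\preccurlyeq d_i$ and hence $m\in D$ because $D$ is a down-set. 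Thus $\phi$ is a bijection which preserves and reflects order, and an order-isomorphism between lattices automatically preserves meets and joins, so $\phi$ is a lattice isomorphism onto $\mathcal{O}(M)$. The only real obstacle is this last step: one must see that join-indecomposability meshes with the distributive law precisely so as to confine $m$ beneath a single generator $d_i$; in a non-distributive lattice $m$ can instead be pieced together from several $d_i$ while lying below none of them, which is exactly why the representation theorem fails there.
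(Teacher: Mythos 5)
Your proof is correct, and it is the standard argument for Birkhoff's representation theorem: show every element is the join of the join-indecomposables below it (no distributivity needed, by induction), and use distributivity plus join-indecomposability to show that $\phi(\bigvee D)=D$ for a down-set $D$, which gives surjectivity; order-isomorphisms of lattices preserve meets and joins. The paper itself gives no proof, citing \cite[p.~192]{CTTA} instead, so there is nothing to compare against beyond noting that yours is the classical route that such references take.

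One point you handled well and should keep explicit: under the paper's literal definition the bottom element $\hat0$ is join-indecomposable, and including it in $M$ would break the theorem (the lattice of down-sets would acquire an extra element, since $\emptyset\subsetneq\{\hat0\}$ would sit below the image of $\hat0$). Your convention of taking $M$ to be the join-indecomposables other than $\hat0$ is exactly what the paper uses later, in Section~\ref{s:pbs}, where $M$ is described as the set of non-zero join-indecomposable elements (JI elements different from $E$). The only cosmetic caveat is the empty down-set in your surjectivity step: for $D=\emptyset$ one has $\bigvee D=\hat0$ and $\phi(\hat0)=\emptyset$, which your argument covers implicitly but could state in a half-sentence.
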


A proof of this theorem is in \cite[p.~192]{CTTA}. We sometimes abbreviate
``join-indecomposable'' to JI.

In particular, if $M$ is an antichain (a poset in which any two elements are
incomparable), then every subset is a down-set, and
the corresponding lattice is the \emph{Boolean lattice} on $M$.

There are well-known characterisations of these classes of lattices. The
Hasse diagrams of $P_5$ and $N_3$ are shown in Figure~\ref{f:p5n3}.

\begin{theorem}\label{thm:diamond-pentagon}
\begin{enumerate}
\item A lattice is modular if and only if it does not contain $P_5$ as a
sublattice.
\item A lattice is distributive if and only if it does not contain $P_5$ or
$N_3$ as a sublattice.
\end{enumerate}
\end{theorem}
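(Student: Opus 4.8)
The plan is to prove each biconditional by treating its two directions separately, with the ``only if'' directions (a forbidden sublattice obstructs the property) being routine and the ``if'' directions (failure of the property manufactures a forbidden sublattice) carrying the real content. Throughout I use the elementary fact that the modular law and the two distributive laws are identities in the operations $\vee$ and $\wedge$, so that, since a sublattice is closed under $\vee$ and $\wedge$ and inherits the order, any such identity valid in $L$ is valid in every sublattice of $L$.

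For the easy directions it therefore suffices to observe that $P_5$ is not modular and $N_3$ is not distributive. Writing the elements of $P_5$ as a chain $\hat0\prec p\prec q\prec\hat1$ together with an element $r$ incomparable to $p$ and~$q$, we have $p\preccurlyeq q$ but $p\vee(r\wedge q)=p\vee\hat0=p$ while $(p\vee r)\wedge q=\hat1\wedge q=q\ne p$. Writing the elements of $N_3$ as $\hat0$, three pairwise incomparable atoms $x,y,z$, and $\hat1$, we have $(x\vee y)\wedge z=z$ but $(x\wedge z)\vee(y\wedge z)=\hat0\ne z$; by Proposition~\ref{p:dist-l}(a) the other distributive law fails too. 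Combined with Proposition~\ref{p:dist-l}(b), this yields the ``only if'' directions of both (a) and (b).

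For the ``if'' direction of~(a), suppose $L$ is not modular, so that there are $a,b,c$ with $a\preccurlyeq c$ and $a\vee(b\wedge c)\prec(a\vee b)\wedge c$, the reverse weak inequality always holding. Set $u=a\vee(b\wedge c)$ and $v=(a\vee b)\wedge c$, and consider the five elements $b\wedge c$, $u$, $v$, $b$, $a\vee b$. A short computation (using only that $b\wedge c\preccurlyeq b$, that $b\preccurlyeq a\vee b$, and that $u\preccurlyeq v$) gives $u\vee b=v\vee b=a\vee b$ and $u\wedge b=v\wedge b=b\wedge c$. From these identities one checks that the five elements are pairwise distinct, that $b$ is incomparable with both $u$ and $v$, and that $b\wedge c$ and $a\vee b$ are the least and greatest among them; hence they form a sublattice isomorphic to $P_5$. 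For the ``if'' direction of~(b), suppose $L$ is not distributive. If $L$ contains a copy of $P_5$ we are done; otherwise $L$ is modular by part~(a). Fixing $a,b,c$ that witness the failure of the first distributive law, form the symmetric ``median'' elements
\[ d=(a\wedge b)\vee(b\wedge c)\vee(c\wedge a),\qquad e=(a\vee b)\wedge(b\vee c)\wedge(c\vee a), \]
together with $a'=d\vee(a\wedge e)$ and the analogously defined $b'$ and $c'$. One shows that $d\prec e$ and that $d=a'\wedge b'=b'\wedge c'=c'\wedge a'$ and $e=a'\vee b'=b'\vee c'=c'\vee a'$, so $\{d,a',b',c',e\}$ is a copy of $N_3$ in $L$.

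The main obstacle is this last step: verifying that the median construction genuinely produces a diamond requires a sequence of applications of the modular law to evaluate the pairwise meets and joins of $a'$, $b'$, $c'$ and to see that these three elements are distinct and pairwise incomparable (so that the collapse is not to a chain). Arranging these modular-law manipulations in a workable order, and extracting $d\prec e$ from non-distributivity, is the only genuinely delicate part of the argument; everything else is bookkeeping. \qed
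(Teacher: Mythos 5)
The paper itself does not prove this theorem; it simply cites Davey and Priestley \cite[p.~134]{dp}, so there is no internal argument to compare against, and your sketch is essentially the classical proof that the cited source contains: the easy directions via the fact that the modular and distributive identities pass to sublattices together with their failure in $P_5$ and $N_3$; the pentagon $\{b\wedge c,\,u,\,v,\,b,\,a\vee b\}$ manufactured from a failure of the modular law; and, in the modular but non-distributive case, the median construction $d$, $e$, $a'=d\vee(a\wedge e)$, and its cyclic analogues. Part (a) as you present it is complete: the four identities you state do follow from the inequalities you list, and distinctness and incomparability are then formal consequences. In part (b) you assert, rather than verify, exactly the computations that carry the weight; they do go through, and the step you single out as delicate, namely $d\prec e$, is in fact short: modularity gives $d\wedge c=(a\wedge c)\vee(b\wedge c)$, while trivially $e\wedge c=(a\vee b)\wedge c$, so $d=e$ would force the first distributive law to hold at the witnessing triple $(a,b,c)$. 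The identities $a'\wedge b'=d$ and $a'\vee b'=e$ (and their cyclic versions) are routine modular-law manipulations, and once they are in hand the distinctness and pairwise incomparability of $a',b',c'$ follow formally from $d\ne e$ (for instance, $a'=b'$ would give $d=a'\wedge b'=a'\vee b'=e$). So your route is the standard one and is sound; the only shortcoming is that the diamond verification in (b) is announced rather than carried out, whereas the paper sidesteps the issue entirely by citation.
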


\begin{figure}[htbp]
\begin{center}
\setlength{\unitlength}{1mm}
\begin{picture}(54,24)
\multiput(10,0)(0,24){2}{\circle*{1}}
\multiput(2,8)(0,8){2}{\circle*{1}}
\put(18,12){\circle*{1}}
\put(10,0){\line(-1,1){8}}
\put(10,24){\line(-1,-1){8}}
\put(2,8){\line(0,1){8}}
\put(10,0){\line(2,3){8}}
\put(10,24){\line(2,-3){8}}
\multiput(30,12)(12,0){3}{\circle*{1}}
\multiput(42,0)(0,24){2}{\circle*{1}}
\multiput(42,0)(12,12){2}{\line(-1,1){12}}
\multiput(42,0)(-12,12){2}{\line(1,1){12}}
\put(42,0){\line(0,1){24}}
\end{picture}
\end{center}
\caption{\label{f:p5n3}The lattices $P_5$ (left) and $N_3$ (right)}
\end{figure}

The proof of this theorem can be found in \cite[p.~134]{dp}.

\subsection{Orthogonal block structures}
\label{sec:OBS}

The next definition comes from experimental design in statistics: see the discussion
in Section~\ref{sec:hist}.
Our treatment follows~\cite{bailey:as}.

An \emph{orthogonal block structure} $(\Omega,\mathcal{B})$ consists of
a collection $\mathcal{B}$ of
partitions of a single set~$\Omega$ satisfying the conditions
\begin{enumerate}
\item $\mathcal{B}$ is a sublattice of the partition lattice (that is,
closed under meet and join);
\item $\mathcal{B}$ contains the two extreme partitions (the equality partition $E$ whose
parts are singletons, and the universal partition $U$ with just one part);
\item every partition in $\mathcal{B}$ is uniform (that is, has all parts of
the same size);
\item any two partitions in $\mathcal{B}$ commute.
\end{enumerate}
The set $\mathcal{B} = \{E,U\}$ is an orthogonal block structure, which we call
\textit{trivial}.


We remark that the definition in \cite[Chapter 6]{bailey:as} has a more
complicated condition in place of our condition (d). With any partition $\Pi$
is associated a subspace $V_\Pi$ of the vector space $\mathbb{R}^\Omega$
consisting of functions which are constant on the parts of $\Pi$, and the
operator $P_\Pi$ of orthogonal projection of $\mathbb{R}^\Omega$ onto $V_\Pi$;
two partitions $\Pi_1$ and $\Pi_2$ are said to be orthogonal if $P_{\Pi_1}$
and $P_{\Pi_2}$ commute. The remark at the top of page 153 of \cite{bailey:as}
notes that, in the presence of conditions (a)--(c), this is equivalent to our simpler condition
(d).

An \emph{association scheme} on $\Omega$ is a partition of $\Omega^2$ into
symmetric relations $S_0,S_1,\ldots,S_r$ having the properties that $S_0$
is the relation of equality and that the span over $\mathbb{R}$ of the
zero-one relation matrices is an algebra. (Combinatorially this means that,
given $i,j,k\in\{0,\ldots,r\}$ and $\alpha$, $\beta\in\Omega$ with $(\alpha,\beta)\in S_k$,
the number $p_{ij}^k$ of elements $\gamma\in\Omega$ such that $(\alpha,\gamma)\in S_i$
and $(\gamma,\beta)\in S_j$ is independent of the choice of $(\alpha,\beta)\in S_k$,
depending only on $i,j,k$.)

An orthogonal block structure gives rise to an association scheme as follows.
Let $R_0$, $R_1$ , \ldots, $R_t$ be equivalence relations forming an OBS. For each~$i$, let
\[S_i=R_i\setminus\bigcup_{j:R_j\subset R_i}R_j.\]
Then the non-empty relations $S_i$ are symmetric and partition $\Omega^2$;
after removing the empty ones and re-numbering, we obtain an association
scheme.

The non-equality relations in an association scheme are often thought of as
graphs. We remark that, while in the association scheme associated with a
primitive permutation group, all these graphs are connected, the association
scheme associated with an orthogonal block structure is very different: all
the graphs, except possibly the one associated with the universal relation $U$,
are disconnected.

Note that, if two OBSs are isomorphic, then the association schemes obtained
in this way are also isomorphic. The converse, however, is false, as the
following example shows.

\paragraph{Example} Recall that two Latin squares $A = (a_{ij})$ and $B = (b_{ij})$ over alphabets $\mathcal{A}$ and $\mathcal{B}$ are \emph{orthogonal} if for every pair $(a, \beta)\in \mathcal{A}\times \mathcal{B}$ there exists a unique pair $(i, j)$ such that $a_{ij} = a$ and $b_{ij} = \beta$. Note that it is a convention going back to Euler that the two alphabets are the Latin and Greek letters respectively when dealing with two orthogonal Latin squares.

Take a complete set of $q-1$ mutually orthogonal Latin
squares of order $q$. Take $\Omega$ to be the set of cells of the square and let $\mathcal{B}$ be the set containing the partitions $E$ and $U$, and the partitions into rows, columns,
and letters of each of the squares. We claim that $\mathcal{B}$ is an orthogonal block structure.

Conditions (a) and (c) are straightforward to verify and (b) holds by construction, so we will only show that the partitions commute. Let $\Pi_1, \Pi_2\in \mathcal{B}$. If $\Pi_1$ and $\Pi_2$ are the row and the column partitions, then they commute because any cell can be reached by first moving along a row and then a column or vice versa. 

If $\Pi_1$ is the row partition and $\Pi_2$ is a letter partition, then we note that each letter appears in every row, and so if $x_1$ and $x_2$ are two cells in rows $r_1$ and $r_2$ respectively containing the letters $a$ and $b$ in the Latin square corresponding to $\Pi_2$, then we can reach $x_2$ from $x_1$ either by first moving to the cell in $r_1$ containing the letter $b$ and then to $x_2$, or by first moving to the cell containing the letter $a$ in $r_2$ and then moving along $r_2$ to $x_2$. Therefore $\Pi_1$ and $\Pi_2$ commute. An entirely similar argument shows that $\Pi_1$ and $\Pi_2$ commute if $\Pi_1$ is the column partition.

Finally, suppose that $\Pi_1$ and $\Pi_2$ are both letter partitions and let $L_1$ and $L_2$ be the Latin squares corresponding to $\Pi_1$ and $\Pi_2$ respectively. Moreover, let $x_1$ and $x_2$ be cells that contain the letters $a$ and $b$ in $L_1$ and the letters $\alpha$ and $\beta$ in $L_2$. Orthogonality ensures that there is exactly one cell $x_3$ which contains the letter $a$ in the $L_1$ and the letter $\beta$ in $L_2$, so we can move from $x_1$ to $x_2$ via $x_3$. Similarly, there exists a unique cell $x_4$ that contains the letter $\alpha$ in  $L_2$ and the letter $b$ in $L_1$, so we can move from $x_1$ to $x_2$ via $x_4$, and so $\Pi_1\circ \Pi_2 = \Pi_2\circ \Pi_1$.

Since every pair of cells are either in the same row or column or carry the
same letter in one of the squares, constructing the association scheme obtains the empty relation from the universal partition $U$. So the association scheme has $q+1$
classes apart from the diagonal.

On the other hand, if we omit one of the Latin squares from the set, then the
remaining ones give an OBS with $q$ partitions apart from $E$ and $U$;
the last partition is recovered by deleting the pairs in all these from the
relation~$U$. So the association schemes are the same.

In particular, for $q=2$, we obtain two orthogonal block structures, one of
which is distributive and the other not, which give the same association
scheme.

\medskip

A similar inclusion-exclusion on subspaces of $\mathbb{R}^\Omega$ finds the
orthogonal decomposition of
$\mathbb{R}^\Omega$ into common eigenspaces for the matrices in the scheme.

Let us now record some remarks on association schemes.
\begin{itemize}
\item The product of two relation matrices is a linear combination of the
relation matrices, hence symmetric; thus any two relation matrices commute,
and the algebra associated with the association scheme (called its
\emph{Bose--Mesner algebra}) is commutative.
\item There is a more general notion, that of a \emph{homogeneous coherent
configuration}, defined as for association schemes but with the condition that
every relation is symmetric replaced by the weaker condition that the converse
of any relation in the configuration is another relation in the configuration.
Some authors (including Hanaki and Miyamoto~\cite{hm}) extend the usage of
the term ``association scheme'' to this more general situation; but we will
not do so.
\end{itemize}

\medskip

It was pointed out to us by the anonymous referee that partition lattices whose equivalence relations commute were independently introduced by G.-C. Rota and his students and collaborators, who called them \emph{linear lattices}. Details can be found in~\cite{Rota1996}, where it is explained how they arise in lattice theory and are connected with logic. The authors agree with the referee's view
that Rota would have enjoyed the connection with OBSs.

We conclude this section by stating a result by Dubreil and Dubreil-Jacotin~\cite{ddj} that gives an alternative characterisation of partition lattices whose equivalence relations commute. We will say that two partitions $\Pi_1$ and $\Pi_2$ are \emph{independent}, if for every $P_1 \in \Pi_1$ and $P_2 \in \Pi_2$ we have $P_1 \cap P_2 \ne \emptyset$.

\begin{prop}
Two partitions $\Pi_1$ and $\Pi_2$ commute if and only if for every part $P\in \Pi_1\vee \Pi_2$, the restrictions $\Pi_1|_P$ and $\Pi_2|_P$ are independent.
\end{prop}

\subsection{Crossing and  nesting} 

Two methods of constructing new OBSs from old, both widely used in experimental
design, are crossing and nesting, defined as follows.

Let $\mathcal{P}_1=(\Omega_1,\mathcal{B}_1)$ and
$\mathcal{P}_2=(\Omega_2,\mathcal{B}_2)$ be orthogonal
block structures.
We think of the elements of $\mathcal{B}_1$ and
$\mathcal{B}_2$ as equivalence relations. In  each construction, we build
a new OBS on $\Omega_1\times\Omega_2$.
For each pair $R_1\in\mathcal{B}_1$ and $R_2\in\mathcal{B}_2$, we define a
relation $R_1\times R_2$ to hold between two pairs $(\alpha_1,\alpha_2)$ and
$(\beta_1,\beta_2)$ if and only if $(\alpha_1,\beta_1)\in R_1$ and
$(\alpha_2,\beta_2)\in R_2$.
It is clear that $R_1\times R_2$ is an equivalence relation.

The first method uses the set of equivalence relations
\[ \{ R_1 \times R_2: R_1 \in \mathcal{B}_1,\ R_2\in\mathcal{B}_2\}.\]
This gives the set $\mathcal{B}_1 \times \mathcal{B}_2$ of
equivalence relations on $\Omega_1 \times \Omega_2$.
This is called \textit{crossing} $\mathcal{P}_1$ and $\mathcal{P}_2$, and
written $\mathcal{P}_1\times\mathcal{P}_2$.

The second method uses the set of equivalence relations
\[
\{R_1 \times U_2: R_1\in \mathcal{B}_1\} \cup \{E_1\times R_2: R_2 \in \mathcal{B}_2\},
\]
where $U_2$ is the universal relation in $\Omega_2$ and $E_1$ is the equality
relation in~$\Omega_1$.  This is called \textit{nesting} $\mathcal{P}_2$ within
$\mathcal{P}_1$, and written as $\mathcal{P}_1/\mathcal{P}_2$.

Of course,  the roles of $\mathcal{P}_1$ and $\mathcal{P}_2$ can be reversed, to give
$\mathcal{P}_2/\mathcal{P}_1$, with $\mathcal{P}_1$ nested within $\mathcal{P}_2$.

It is straightforward to show that,
if $\mathcal{P}_1$ and $\mathcal{P}_2$ are both closed under taking suprema and taking
infima, then so are $\mathcal{P}_1 \times \mathcal{P}_2$, $\mathcal{P}_1/\mathcal{P}_2$
and $\mathcal{P}_2/\mathcal{P}_1$.

If $R_1$ and $R_3$ are in $\mathcal{B}_1$ and $R_2$ and $R_4$ are in $\mathcal{B}_2$ then
$(R_1\circ R_3) \times (R_2\circ R_4)= (R_1\times R_2) \circ (R_3 \times R_4)$.
Therefore, since every two equivalence relations in $\mathcal{B}_1$ commute
 and every two equivalence relations in $\mathcal{B}_2$ commute, then the same is true for every two
 equivalence relations in each of $\mathcal{P}_1 \times \mathcal{P}_2$,
 $\mathcal{P}_1/\mathcal{P}_2$ and $\mathcal{P}_2/\mathcal{P}_1$.
\label{crossandnest}

Let $G\leq {\rm Sym}(\Gamma)$ and $H\leq {\rm Sym}(\Delta)$ be transitive permutation groups. Recall that the product action of the direct product $G\times H$ on $\Gamma\times \Delta$ is the action defined by $(\gamma, \delta)(g, h) = (\gamma g, \delta h)$ for all $(\gamma, \delta)\in \Gamma\times \Delta$ and $(g, h)\in G\times H$. For permutation group theorists, note the similarities between crossing and
nesting on one hand, and direct product (with product action) and wreath
product (with imprimitive action) on the other. Statisticians call the
results of crossing and nesting trivial OBSs \emph{row-column structures}
and \emph{block structures} respectively.

Nelder \cite{JAN1} introduced the class of orthogonal block structures which can
be obtained from trivial structures by repeatedly crossing and nesting, and
called them \emph{simple orthogonal block structures}.   See Section~\ref{sec:hist}.

\subsection{Poset block structures}\label{s:pbs}

There is a class of OBSs, more general than the simple ones, effectively
introduced in \cite{ARL}, and now called poset block structures, which
we define.

A \emph{poset block structure} is an orthogonal block structure
in which the lattice of partitions is distributive. (We have seen in
Proposition~\ref{p:dist-l} that the
distributive law is stronger than the modular law.)

Using the Fundamental Theorem on Distributive Lattices (Theorem~\ref{t:ftdl}),
we can turn this abstract definition into something more useful. Recall that
a distributive lattice $L$ is the lattice of down-sets in a poset
$(M,\sqsubseteq)$, where $M$ can be recovered from $L$ as the set of non-zero
join-indecomposable elements (that is, JI elements different from $E$). Put
$N=|M|$. Now we attach a finite set $\Omega_i$ of size $n_i>1$ to each
element $m_i\in M$, and
take $\Omega$ to be the Cartesian product of the sets $\Omega_i$ for all
$m_i\in M$. Now we need to define a partition $\Pi_D$ for each down-set $D$
in $M$. This is done as follows. Define a relation $R_D$ on $\Omega$ by
\[R_D((\alpha_1,\ldots,\alpha_N),(\beta_1,\ldots,\beta_N))\Leftrightarrow(\forall m_i\notin D)
(\alpha_i=\beta_i),\]
where $\alpha_i,\beta_i\in\Omega_i$ for all $m_i\in M$. Then $R_D$ is an
equivalence relation on $\Omega$, and we let $\Pi_D$ be the corresponding
partition. (The appearance of the poset $M$ explains the name \emph{poset
block structures}.)

It is straightforward to check that 
\begin{enumerate}
\item the partitions $E$ and $U$ of $\Omega$ correspond to the
empty set and the whole of $M$;
\item if $D_1$ and $D_2$ are down-sets in $M$, then
\[\Pi_{D_1\cap D_2}=\Pi_{D_1}\wedge\Pi_{D_2}\hbox{ and }
\Pi_{D_1\cup D_2}=\Pi_{D_1}\vee\Pi_{D_2}.\]
So the partitions $\Pi_D$ form a lattice isomorphic to the given lattice $L$.
\end{enumerate}

This is proved in \cite{DCC,TPSRAB}, where it is shown that every poset block
structure (according to our definition) is given by this construction.

At this point, we mention a paper by Yan~\cite{yan}, whose title suggests
that it concerns distributive lattices of commuting equivalence relations.
In fact, both her hypotheses and her conclusion are much stronger than
ours. In the case of uniform partitions, her theorem asserts the following:
if $\Pi_1$ and $\Pi_2$ are commuting uniform equivalence relations such
that every equivalence relation $\Psi$ which commutes with both of them
associates with them, in the sense that
\[\Psi\wedge(\Pi_1\vee\Pi_2)=(\Psi\wedge\Pi_1)\vee(\Psi\wedge\Pi_2),\]
then $\Pi_1$ and $\Pi_2$ are comparable in the partial order. (This does not
say that every distributive lattice of commuting partitions is a chain.)

\paragraph{Notation} For every $i\in \{1, \ldots, N\}$, let $A(i)$ denote the set $\{j \in \{1, \ldots, N\} \, : \, m_i \sqsubset m_j\}$ and $A[i]$ the set $\{j\in \{1, \ldots, N\} \, : \, m_i \sqsubseteq m_j\}$. Similarly, let
$D(i)$ denote the set $\{j \in \{1, \ldots, N\} \, : \, m_j \sqsubset m_i\}$ and $D[i]$ the set $\{j\in \{1, \ldots, N\} \, : \, m_j \sqsubseteq m_i\}$.
(Mnemonic: $A={}$`ancestor', $D={}$`descendant'.)

\subsection{Generalised wreath products}
\label{s:gwp}

Closely related to poset block structures is the notion of generalised
wreath product. We now define those, following the notation used in~\cite{bprs}.

\medskip

We write $\Omega^i$ for the Cartesian product $\prod_{j \in A(i)} \Omega_j$ and $\pi^i$ for the natural projection from $\Omega$ onto $\prod_{j\in A(i)}\Omega_j$. Finally, for every $m_i\in M$, let $G(m_i)$ be a permutation group on $\Omega_i$, and let $F_i$ denote the set of all functions from $\Omega^i$ into $G(m_i)$. Thus, if $f_i\in F_i$, then
$f_i$ allocates a permutation in $G(m_i)$ to each element of $\Omega^i$.

The \emph{generalised wreath product} $G$ of the groups $G(m_1), \ldots, G(m_N)$ over the poset $M$ is the group $\prod_{i = 1}^N F_i$, and it acts on $\Omega$ in the following way: if $\omega = (\omega_1,\ldots,\omega_N) \in \Omega$ and $f = (f_1, \ldots, f_N) \in G$, then
\[
(\omega f)_i = \omega_i(\omega\pi^i f_i)
\]
for $i=1,\ldots,N$.

We note that, if $M$ is the $2$-element antichain $\{m_1,m_2\}$, then the
generalised wreath product of $G(m_1)$ and $G(m_2)$ is their direct product;
while if $M$ is a $2$-element chain, with $m_1\sqsubset m_2$, then $G$ is the
wreath product $G(m_1)\wr G(m_2)$, in its imprimitive action.

The next result gives the automorphism group of a poset block structure.

\begin{prop}
The automorphism group of the poset block structure given above is the 
generalised wreath product of symmetric groups $S_{n_i}$ over the poset $(M,\sqsubseteq)$.
\label{p:pbs}
\end{prop}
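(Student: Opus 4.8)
The plan is to show inclusion in both directions between the automorphism group $A$ of the poset block structure $(\Omega,\mathcal{B})$ and the generalised wreath product $W$ of the symmetric groups $S_{n_i}$ over $(M,\sqsubseteq)$, both viewed as subgroups of $\Sym(\Omega)$.

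First I would verify that $W\leqslant A$. An element of $W$ has the form $f=\prod_i f_i$ with $f_i\colon\Omega^i\to S_{n_i}$; I must check that $f$ preserves every partition $\Pi_D$ for each down-set $D$, equivalently that $R_D$ is $f$-invariant. Fix $\omega,\omega'\in\Omega$ with $(\omega,\omega')\in R_D$, so $\alpha_i=\alpha_i'$ for all $m_i\notin D$. For such $i$, I claim $(\omega f)_i=(\omega' f)_i$: since $A(i)$ consists of indices $j$ with $m_i\sqsubset m_j$, and $D$ is a down-set not containing $m_i$, no $m_j$ with $j\in A(i)$ lies in $D$ either (if $m_j\in D$ and $m_i\sqsubset m_j$ then $m_i\in D$); hence $\omega$ and $\omega'$ agree on all coordinates in $A(i)$, so $\omega\pi^i=\omega'\pi^i$, so $f_i$ is applied to the same group element at the same point, giving $(\omega f)_i=\alpha_i(\omega\pi^i f_i)=\alpha_i'(\omega'\pi^i f_i)=(\omega' f)_i$. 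Thus $(\omega f,\omega' f)\in R_D$, so $f\in A$.

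Next I would prove $A\leqslant W$, which is the harder direction and the main obstacle. The idea is to use the join-indecomposable partitions $\Pi_{\langle m_i\rangle}$ (where $\langle m_i\rangle=D[i]$ is the principal down-set) together with the partitions $\Pi_{A(i)^c}$-type objects to ``coordinatise'' the action. Concretely, for a fixed index $i$, the two down-sets $D[i]$ and $D(i)=D[i]\setminus\{m_i\}$ differ only in $m_i$; an automorphism $g\in A$ preserves both $\Pi_{D[i]}$ and $\Pi_{D(i)}$, and the parts of $\Pi_{D(i)}$ refine those of $\Pi_{D[i]}$ with quotient of size $n_i$. Dually, using the complementary down-set $A(i)^c=\{j:m_i\not\sqsubset m_j\}$ (one checks this is a down-set) together with $A[i]^c$, one recovers that $g$ acts on each block in a way that, coordinate by coordinate, gives a well-defined permutation of $\Omega_i$ depending only on the coordinates indexed by $A(i)$. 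Carrying this out for all $i$ simultaneously exhibits $g$ as $\prod_i f_i$ for suitable functions $f_i\colon\Omega^i\to S_{n_i}$, i.e. $g\in W$. The delicate points are: checking that the relevant sets are down-sets; verifying that the permutation induced on the $i$th coordinate genuinely factors through $\pi^i$ (not through more coordinates) — this uses that $g$ preserves $\Pi_{A(i)^c}$, whose relation forces agreement on all coordinates outside $A(i)$; and checking that the pieces assemble into a single element acting by the generalised wreath product formula rather than merely lying in the set-theoretic product.

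Finally I would remark that the argument in fact shows more: the automorphism group of any poset block structure on $\Omega=\prod\Omega_i$ is determined entirely by the poset $M$ and the sizes $n_i$, since only the lattice structure (via down-sets) and the coordinate decomposition were used; the symmetric groups arise because nothing constrains the action on an individual $\Omega_i$ beyond being a permutation. This gives the stated identification of $\operatorname{Aut}(\Omega,\mathcal{B})$ with the generalised wreath product $S_{n_1}\wr_M\cdots\wr_M S_{n_N}$.
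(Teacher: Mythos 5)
The paper does not actually prove Proposition~\ref{p:pbs}: it simply cites \cite{bprs}, where the result is obtained inside a more elaborate framework (drawing on Wells's semigroup wreath products), so your direct two-inclusion argument is an independent route rather than a reconstruction of the paper's proof. Your easy inclusion is complete and correct: for a down-set $D$ and $m_i\notin D$, the down-set property gives $A(i)\cap D=\emptyset$, so two $R_D$-related points have the same image under $\pi^i$ and the same $i$th coordinate, and the generalised wreath product formula then preserves $R_D$.

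The reverse inclusion, however, is only sketched and has a genuine gap, plus a bookkeeping slip. With the paper's convention $R_D(\alpha,\beta)\Leftrightarrow\alpha_j=\beta_j$ for all $m_j\notin D$, the relation $R_{A(i)^c}$ forces agreement on the coordinates \emph{in} $A(i)$, not ``outside $A(i)$'' as you state; and it is preservation of $R_{A[i]^c}$ (agreement on $A[i]=A(i)\cup\{i\}$) that shows $(\omega g)_i$ depends only on $\omega_i$ and $\omega\pi^i$. More importantly, you assert but never verify that, for fixed $\omega\pi^i$, the induced map $\omega_i\mapsto(\omega g)_i$ is a \emph{bijection} of $\Omega_i$; without this you only obtain functions $f_i\colon\Omega^i\to\Omega_i^{\Omega_i}$, not $f_i\colon\Omega^i\to S_{n_i}$, and membership in the generalised wreath product fails. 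The missing step does require an argument, for instance: take $\omega,\omega'$ differing only in coordinate $i$; then $(\omega,\omega')\in R_{A(i)^c}$ but $(\omega,\omega')\notin R_{A[i]^c}$, so their images under $g$ agree on $A(i)$ yet must disagree somewhere in $A[i]$, hence disagree exactly in coordinate $i$; this gives injectivity, and finiteness gives a permutation. Once that is in place, defining $f_i(\omega\pi^i)$ to be this permutation yields $(\omega g)_i=\omega_i(\omega\pi^i f_i)$ for all $i$ and $\omega$, and since the paper's generalised wreath product is by definition the full product $\prod_i F_i$ with exactly this action, there is nothing further to ``assemble''; that worry, and the detour through $D[i]$ and $D(i)$, can simply be dropped.
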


This is proved in \cite{bprs}.

The operations of crossing and nesting preserve the class of poset block
structures: crossing corresponds to taking the disjoint union of the two
posets (with no comparability between them); nesting corresponds to taking
the ordered sum (with every element of the second poset below every element
of the first).

Proposition~\ref{p:pbs} shows that poset block structures always have large
auto\-morphism groups. By contrast, orthogonal block structures may have no
non-trivial automorphisms at all. Let $L$ be a Latin square, with $\Omega$
the set of positions. Take the two trivial partitions and the three partitions
into rows, columns and entries. Automorphisms of this structure are known as
\emph{autotopisms} in the Latin square literature; it is known that almost all
Latin squares have trivial autotopism group: see \cite{PJCarx,MWan}.

\section{History in Design of Experiments}
\label{sec:hist}

These ideas were developed gradually in the early days of design of statistical experiments.
In order to describe them in a standard way, we will use some notation introduced by 
Nelder in \cite{JAN1}.  If $n$ is a positive integer, then we denote by 
$\underline{\underline{n}}$
any set of  size~$n$ which has the \emph{trivial block structure} $\{U,E\}$.
(This notation is used in \cite{bailey:as} but is replaced by $[n]$ in \cite{DCC}.)

\subsection{Fisher and Yates at Rothamsted}

Ronald Fisher was the first statistician at Rothamsted Experimental Station, working there
from 1919 to 1933: see \cite{RABHist}.  He advocated two, fairly simple, blocking structures.
In the first, called a \textit{block design}, the $bk$ plots were partitioned into $b$~blocks
of size~$k$, thus giving the orthogonal block structure
$\underline{\underline{b}}/\underline{\underline{k}}$.  In the second, called a
\textit{Latin square}, the $n^2$ plots formed a square array with $n$~rows and
$n$~columns, to which $n$~treatments were applied in such a way that each treatment
occurred once in each row and once in each column.  Ignoring the treatments, this gives the
orthogonal block structure $\underline{\underline{n}}\times\underline{\underline{n}}$.  

Frank Yates worked in the Statistics Department at Rothamsted Experimental Station from
1931 until 1968: see \cite{RABHist}.  He gradually developed more and more complicated
block structures for designed experiments. His paper on ``Complex Experiments''
\cite{FYCE}, read to the Royal Statistical Society in 1935, covers many of these.
After describing block designs and Latin squares, he proposes ``splitting of plots''
(page~197) into subplots in both cases.
If the  number of subplots per plot is~$s$, this leads to the orthogonal block structures
$\underline{\underline{b}}/\underline{\underline{k}}/\underline{\underline{s}}$ and
$(\underline{\underline{n}}\times\underline{\underline{n}})/\underline{\underline{s}}$
(treatments are ignored in these block structures).
These are all based on partially ordered sets (although he did not use this terminology),
as shown in Figure~\ref{fig:PIC}.

\begin{figure}
  \setlength{\unitlength}{1mm}
  \begin{center}
    \begin{tabular}{lc@{\qquad}c@{\qquad}}
    \hline
    \centering Verbal description &
    \multicolumn{1}{p{1.2in}}{\centering Hasse diagram of poset} &
    \multicolumn{1}{p{1.3in}}{\centering Hasse diagram of OBS}\\
    \hline
\raisebox{0.55in}{Block design} &
\begin{picture}(30,30)
      \put(15,10){\blobb}
      \put(15,20){\blobb}
      \put(15,10){\line(0,1){10}}
      \put(16,10){\makebox(0,0)[l]{$\underline{\underline{k}}$}}
            \put(16,20){\makebox(0,0)[l]{$\underline{\underline{b}}$}}
\end{picture} &
\begin{picture}(30,30)
  \put(15,5){\blob}
  \put(15,15){\blob}
  \put(15,25){\blob}
  \put(16,25){\makebox(0,0)[l]{$U$}}
  \put(14,25){\makebox(0,0)[r]{$1$}}
    \put(16,15){\makebox(0,0)[l]{blocks}}
    \put(14,15){\makebox(0,0)[r]{$b$}}
      \put(16,5){\makebox(0,0)[l]{plots}}
      \put(14,5){\makebox(0,0)[r]{$bk$}}
      \put(15,5.5){\line(0,1){9}}
      \put(15,15.5){\line(0,1){9}}
  \end{picture}
\\
    \hline
    \raisebox{0.55in}{Latin square} &
    \begin{picture}(30,30)
      \put(10,15){\blobb}
      \put(20,15){\blobb}
      \put(9,15){\makebox(0,0)[r]{$\underline{\underline{n}}$}}
            \put(21,15){\makebox(0,0)[l]{$\underline{\underline{n}}$}}
    \end{picture} &
    \begin{picture}(30,30)
      \put(15,5){\blob}
      \put(15,25){\blob}
      \put(5,15){\blob}
      \put(25,15){\blob}
      \put(15.5,5.5){\line(1,1){9}}
      \put(14.5,5.5){\line(-1,1){9}}
      \put(5.5,15.5){\line(1,1){9}}
            \put(24.5,15.5){\line(-1,1){9}}
            \put(17,25){\makebox(0,0)[l]{$U$}}
            \put(13,25){\makebox(0,0)[r]{$1$}}
            \put(17,4.5){\makebox(0,0)[l]{plots}}
           \put(13,5){\makebox(0,0)[r]{$n^2$}}
           \put(26,16){\makebox(0,0)[l]{$n$}}
           \put(24,13){\makebox(0,0)[l]{columns}}
           \put(4,16){\makebox(0,0)[r]{$n$}}
           \put(6,13){\makebox(0,0)[r]{rows}}
    \end{picture}
    \\
    \hline
    \raisebox{0.7in}{Split-plot design} &
    \begin{picture}(30,40)
      \put(15,10){\blobb}
      \put(15,20){\blobb}
      \put(15,30){\blobb}
      \put(15,10){\line(0,1){20}}
      \put(16,30){\makebox(0,0)[l]{$\underline{\underline{b}}$}}
      \put(16,20){\makebox(0,0)[l]{$\underline{\underline{k}}$}}
      \put(16,10){\makebox(0,0)[l]{$\underline{\underline{s}}$}}
    \end{picture}
    &
    \begin{picture}(30,40)
      \put(15,5){\blob}
      \put(15,15){\blob}
      \put(15,25){\blob}
      \put(15,35){\blob}
      \put(15,5.5){\line(0,1){9}}
      \put(15,15.5){\line(0,1){9}}
      \put(15,25.5){\line(0,1){9}}
      \put(16,35){\makebox(0,0)[l]{$U$}}
      \put(16,25){\makebox(0,0)[l]{blocks}}
      \put(16,15){\makebox(0,0)[l]{plots}}
      \put(16,5){\makebox(0,0)[l]{subplots}}
      \put(14,35){\makebox(0,0)[r]{$1$}}
      \put(14,25){\makebox(0,0)[r]{$b$}}
      \put(14,15){\makebox(0,0)[r]{$bk$}}
      \put(14,5){\makebox(0,0)[r]{$bks$}}
    \end{picture}
    \\
    \hline
    \raisebox{0.7in}{\parbox{1.2in}{\raggedright Latin square with split plots}}
    &
    \begin{picture}(30,40)
      \put(15,15){\blobb}
      \put(25,25){\blobb}
      \put(5,25){\blobb}
      \put(15,15){\line(1,1){10}}
      \put(15,15){\line(-1,1){10}}
      \put(26,25){\makebox(0,0)[l]{$\underline{\underline{n}}$}}
      \put(4,25){\makebox(0,0)[r]{$\underline{\underline{n}}$}}
      \put(15,14){\makebox(0,0)[t]{$\underline{\underline{s}}$}}
    \end{picture}
    &
    \begin{picture}(30,40)
      \put(15,5){\blob}
        \put(15,15){\blob}
        \put(25,25){\blob}
        \put(5,25){\blob}
        \put(15,35){\blob}
        \put(15,5.5){\line(0,1){9}}
        \put(15.5,15.5){\line(1,1){9}}
        \put(14.5,15.5){\line(-1,1){9}}
        \put(5.5,25.5){\line(1,1){9}}
        \put(24.5,25.5){\line(-1,1){9}}
        \put(17,35){\makebox(0,0)[l]{$U$}}
        \put(16,4.5){\makebox(0,0)[l]{subplots}}
        \put(16,14.5){\makebox(0,0)[l]{plots}}
           \put(14,15){\makebox(0,0)[r]{$n^2$}}
           \put(14,5){\makebox(0,0)[r]{$n^2s$}}
                      \put(26,26){\makebox(0,0)[l]{$n$}}
           \put(24,23){\makebox(0,0)[l]{columns}}
           \put(4,26){\makebox(0,0)[r]{$n$}}
           \put(6,23){\makebox(0,0)[r]{rows}}
    \end{picture}
    \\
    \hline
  \end{tabular}
  \end{center}
  \caption{Orthogonal block structures mentioned by Yates in \cite{FYCE}
    \label{fig:PIC}}
  \end{figure}

Yates also suggests ``two $4\times 4$ Latin squares with subplots'' (page~201), which gives
the orthogonal block structure
$\underline{\underline{2}}/(\underline{\underline{4}} \times \underline{\underline{4}})
/\underline{\underline{2}}$;
splitting each row of an $\underline{\underline{r}}\times \underline{\underline{c}}$
rectangle into two subrows, which gives the orthogonal block structure
$(\underline{\underline{r}}/\underline{\underline{2}})\times \underline{\underline{c}}$
(page 202);
and a collection of four $5 \times 5$ Latin squares (page 218), which gives the orthogonal
block structure
$\underline{\underline{4}}/(\underline{\underline{5}}\times\underline{\underline{5}})$.
These are shown in Figure~\ref{fig:PIC2}.

\begin{figure}
  \setlength{\unitlength}{1mm}
  \begin{center}
    \begin{tabular}{lc@{\qquad}c@{\qquad}}
    \hline
   \centering Verbal description &
    \multicolumn{1}{p{1.2in}}{\centering Hasse diagram of poset} &
    \multicolumn{1}{p{1.7in}}{\centering Hasse diagram of OBS}\\
    \hline
    \raisebox{0.95in}{\parbox{1.2in}{\raggedright Two Latin squares with subplots}} &
    \begin{picture}(30,50)
      \put(25,25){\blobb}
      \put(15,15){\blobb}
      \put(5,25){\blobb}
      \put(15,35){\blobb}
      \put(15,15){\line(1,1){10}}
      \put(15,15){\line(-1,1){10}}
            \put(5,25){\line(1,1){10}}
            \put(25,25){\line(-1,1){10}}
      \put(26,25){\makebox(0,0)[l]{$\underline{\underline{4}}$}}
      \put(4,25){\makebox(0,0)[r]{$\underline{\underline{4}}$}}
      \put(15,14){\makebox(0,0)[t]{$\underline{\underline{2}}$}}
           \put(15,36){\makebox(0,0)[b]{$\underline{\underline{2}}$}}
    \end{picture}
    &
    \begin{picture}(40,50)
      \put(20,5){\blob}
      \put(20,15){\blob}
      \put(30,25){\blob}
      \put(10,25){\blob}
      \put(20,35){\blob}
      \put(20,45){\blob}
      \put(20,5.5){\line(0,1){9}}
\put(20.5,15.5){\line(1,1){9}}
\put(20,35.5){\line(0,1){9}}
\put(19.5,15.5){\line(-1,1){9}}
\put(10.5,25.5){\line(1,1){9}}
\put(29.5,25.5){\line(-1,1){9}}
\put(21,45){\makebox(0,0)[l]{$U$}}
\put(19,45){\makebox(0,0)[r]{$1$}}
\put(21,35){\makebox(0,0)[l]{squares}}
\put(19,35.7){\makebox(0,0)[r]{$2$}}
\put(21,15){\makebox(0,0)[l]{plots}}
\put(19,15.1){\makebox(0,0)[r]{$32$}}
\put(21,4.5){\makebox(0,0)[l]{subplots}}
\put(19,4.75){\makebox(0,0)[r]{$64$}}
\put(9,25){\makebox(0,0)[r]{$8$ rows}}
\put(31,25){\makebox(0,0)[l]{$8$ columns}}
    \end{picture}
    \\
    \hline
    \raisebox{0.7in}{\parbox{1.2in}{Splitting rows of a rectangle}} &
    \begin{picture}(30,40)
      \put(10,15){\blobb}
      \put(10,25){\blobb}
      \put(20,20){\blobb}
      \put(10,15){\line(0,1){10}}
      \put(9,25){\makebox(0,0)[r]{$\underline{\underline{r}}$}}
      \put(9,15){\makebox(0,0)[r]{$\underline{\underline{2}}$}}
       \put(21,20){\makebox(0,0)[l]{$\underline{\underline{c}}$}}
      \end{picture}
    &
    \begin{picture}(40,40)
      \put(25,35){\blob}
      \put(15,25){\blob}
      \put(35,25){\blob}
      \put(5,15){\blob}
      \put(25,15){\blob}
      \put(15,5){\blob}
      \put(15.5,5.5){\line(1,1){9}}
      \put(14.5,5.5){\line(-1,1){9}}
      \put(25.5,15.5){\line(1,1){9}}
      \put(5.5,15.5){\line(1,1){9}}
      \put(15.5,25.5){\line(1,1){9}}
      \put(24.5,15.5){\line(-1,1){9}}
      \put(34.5,25.5){\line(-1,1){9}}
      \put(27,35){\makebox(0,0)[l]{$U$}}
      \put(23,35){\makebox(0,0)[r]{$1$}}
      \put(13,25){\makebox(0,0)[r]{$r$ rows}}
      \put(5,17){\makebox(0,0)[r]{$2r$}}
      \put(6,13){\makebox(0,0)[r]{subrows}}
      \put(14,4.75){\makebox(0,0)[r]{$2rc$}}
      \put(16,4.5){\makebox(0,0)[l]{subplots}}
      \put(24,13){\makebox(0,0)[l]{$rc$ plots}}
      \put(34,23){\makebox(0,0)[l]{columns}}
      \put(36,26){\makebox(0,0)[l]{$c$}}
      \end{picture}
    \\
    \hline
    \raisebox{0.7in}{\parbox{1.25in}{\raggedright Four Latin squares of order five}} &
    \begin{picture}(30,40)
      \put(5,15){\blobb}
      \put(25,15){\blobb}
      \put(15,25){\blobb}
      \put(15,26){\makebox(0,0)[b]{$\underline{\underline{4}}$}}
      \put(5,14){\makebox(0,0)[t]{$\underline{\underline{5}}$}}
      \put(25,14){\makebox(0,0)[t]{$\underline{\underline{5}}$}}
      \put(15,25){\line(1,-1){10}}
        \put(15,25){\line(-1,-1){10}}
    \end{picture}
    &
    \begin{picture}(40,40)
      \put(20,35){\blob}
      \put(20,25){\blob}
      \put(10,15){\blob}
      \put(30,15){\blob}
      \put(20,5){\blob}
      \put(20.5,5.5){\line(1,1){9}}
      \put(19.5,5.5){\line(-1,1){9}}
       \put(10.5,15.5){\line(1,1){9}}
       \put(29.5,15.5){\line(-1,1){9}}
       \put(20,25.5){\line(0,1){9}}
       \put(21,35){\makebox(0,0)[l]{$U$}}
       \put(19,35){\makebox(0,0)[r]{$1$}}
       \put(21,25){\makebox(0,0)[l]{squares}}
\put(19,25.7){\makebox(0,0)[r]{$4$}}
\put(9,15){\makebox(0,0)[r]{$20$ rows}}
\put(31,16){\makebox(0,0)[l]{$20$}}
\put(29,13){\makebox(0,0)[l]{columns}}
      \put(19,4.75){\makebox(0,0)[r]{$100$}}
      \put(21,4.5){\makebox(0,0)[l]{plots}}
    \end{picture}
       \\
    \hline
    \end{tabular}
  \end{center}
  \caption{More orthogonal block structures mentioned by Yates \label{fig:PIC2}}
\end{figure}

\subsection{Nelder's simple orthogonal block structures}
John Nelder worked in the Statistics Section of the UK's National Vegetable Research Station
from 1951 to 1968.  In two papers \cite{JAN1,JAN2} in 1965 he introduced the class of
orthogonal block structures which can be obtained from trivial structures by repeated
crossing and nesting, and called them \textit{simple orthogonal block structures}.
In that year, he also visited CSIRO (the Commonwealth Scientific and Industrial Research
Organisation) at the Waite Campus of the University of Adelaide in South Australia, where
he worked with Graham Wilkinson to start developing the statistical software GenStat. He
and colleagues developed GenStat further while  he was Head of the Statistics Department at
Rothamsted Experimental Station from 1968 to 1984.
The benefit of iterated crossing and nesting is that each block structure can be described
by a simple formula, which can be input as a line in the program used to analyse the data
obtained from an experiment.

\subsection{Statisticians at Iowa State University}

In parallel with Nelder's work was the work of Oscar Kempthorne and his colleagues.
Kempthorne worked at the  Statistics Department at Rothamsted Experimental Station from
1941 to 1946.  He spent most of the rest of his career at Iowa State University.  While there,
he obtained a grant from the Aeronautical Research Laboratory to work with his colleagues
on various problems in the design of experiments.

Their technical report \cite{ARL} was completed in November 1961, and consisted
of 218 typed pages.
It uses the phrases ``experimental structure'' and ``response structure'' for what we call
``block structure''.   Sometimes the treatments were also included in this structure.
Chapter~3 is based on the PhD theses of
Zyskind \cite{Zys} and Throckmorton \cite{Throck}; part of this was later
published as \cite{Zys2}.

With hindsight, it seems that they were trying to define poset block structures,
but they managed to confuse the poset~$M$ of coordinates with the lattice of
partitions.  They denoted the universal partition~$U$ by $\mu$, and the
equality partition~$E$ by $\varepsilon$.  They used complicated formulae,
called \textit{symbolic representations}, to
explain the poset~$M$, but then included $\mu$ and
$\varepsilon$ in the corresponding Hasse diagram, which they called the
\textit{structure diagram}.  They dealt with all posets of size at most four,
and showed $16$ of the $63$ posets of size five.

Figure~\ref{fig:ARL} shows three of their block structures.  The first of
these is also in Figure~\ref{fig:PIC2}; the last one
cannot be obtained by crossing and nesting, so it needs two formulae.

\begin{figure}
  \setlength{\unitlength}{1mm}
  \begin{center}
    \begin{tabular}{lc@{\qquad}c@{\qquad}}
    \hline
    \multicolumn{1}{p{1in}}{\centering Symbolic representation} &
    \multicolumn{1}{p{1.2in}}{\centering Structure diagram} &
    \multicolumn{1}{p{1.7in}}{\centering Hasse diagram of OBS}\\
    \hline
    \raisebox{0.7in}{\parbox{1.2in}{$S\colon RC$}} &
    \begin{picture}(30,40)
      \put(15,35){\bloz}
      \put(15,25){\bloz}
      \put(5,15){\bloz}
      \put(25,15){\bloz}
      \put(15,5){\bloz}
      \put(16,6){\line(1,1){8}}
      \put(14,6){\line(-1,1){8}}
        \put(6,16){\line(1,1){8}}
        \put(24,16){\line(-1,1){8}}
        \put(15,26){\line(0,1){8}}
        \put(15,37){\makebox(0,0)[b]{$\mu$}}
        \put(17,25){\makebox(0,0)[l]{$S$}}
        \put(27,15){\makebox(0,0)[l]{$C$}}
        \put(3,15){\makebox(0,0)[r]{$R$}}
        \put(15,3){\makebox(0,0)[t]{$\varepsilon$}}
    \end{picture} &
        \begin{picture}(40,40)
      \put(20,35){\blob}
      \put(20,25){\blob}
      \put(10,15){\blob}
      \put(30,15){\blob}
      \put(20,5){\blob}
      \put(20.5,5.5){\line(1,1){9}}
      \put(19.5,5.5){\line(-1,1){9}}
       \put(10.5,15.5){\line(1,1){9}}
       \put(29.5,15.5){\line(-1,1){9}}
       \put(20,25.5){\line(0,1){9}}
       \put(20,36){\makebox(0,0)[b]{$U$}}
             \put(22,25){\makebox(0,0)[l]{$S$}}
             \put(9,15){\makebox(0,0)[r]{$R$}}
             \put(31,15){\makebox(0,0)[l]{$C$}}
      \put(20,4){\makebox(0,0)[t]{$E$}}
    \end{picture}
    \\
    \hline
    \raisebox{0.9in}{\parbox{1in}{$S\colon(R)(C\colon L)$}} &
    \begin{picture}(30,50)
      \put(15,45){\bloz}
      \put(15,35){\bloz}
      \put(5,25){\bloz}
      \put(5,15){\bloz}
      \put(25,15){\bloz}
      \put(15,5){\bloz}
      \put(16,6){\line(1,1){8}}
      \put(14,6){\line(-1,1){8}}
      \put(5,16){\line(0,1){8}}
      \put(6,26){\line(1,1){8}}
      \put(24.8,16.4){\line(-1,2){9}}
      \put(15,36){\line(0,1){8}}
       \put(15,47){\makebox(0,0)[b]{$\mu$}}
        \put(17,35){\makebox(0,0)[l]{$S$}}
        \put(27,15){\makebox(0,0)[l]{$R$}}
        \put(3,15){\makebox(0,0)[r]{$L$}}
        \put(3,25){\makebox(0,0)[r]{$C$}}
        \put(15,3){\makebox(0,0)[t]{$\varepsilon$}}
    \end{picture}
    &
    \begin{picture}(40,50)
      \put(25,45){\blob}
      \put(25,35){\blob}
      \put(15,25){\blob}
      \put(35,25){\blob}
      \put(5,15){\blob}
      \put(25,15){\blob}
      \put(15,5){\blob}
      \put(25,35.5){\line(0,1){9}}
      \put(15.5,5.5){\line(1,1){9}}
      \put(14.5,5.5){\line(-1,1){9}}
      \put(25.5,15.5){\line(1,1){9}}
      \put(5.5,15.5){\line(1,1){9}}
      \put(15.5,25.5){\line(1,1){9}}
      \put(24.5,15.5){\line(-1,1){9}}
      \put(34.5,25.5){\line(-1,1){9}}
      \put(27,35){\makebox(0,0)[l]{$S$}}
      \put(25,46){\makebox(0,0)[b]{$U$}}
      \put(13,25){\makebox(0,0)[r]{$C$}}
      \put(4,15){\makebox(0,0)[r]{$L$}}
      \put(15,4){\makebox(0,0)[t]{$E$}}
      \put(24,13){\makebox(0,0)[l]{$C\wedge R$}}
         \put(36,25){\makebox(0,0)[l]{$R$}}
      \end{picture}
    \\
    \hline
    \raisebox{0.9in}{\parbox{1in}{\raggedright $(S\colon Q)$ $(P)$ and $(SP\colon R)$}}
    &
    \begin{picture}(30,50)
      \put(15,40){\bloz}
      \put(5,30){\bloz}
      \put(25,30){\bloz}
      \put(5,20){\bloz}
      \put(25,20){\bloz}
      \put(15,10){\bloz}
      \put(16,11){\line(1,1){8}}
      \put(14,11){\line(-1,1){8}}
      \put(6,31){\line(1,1){8}}
      \put(24,31){\line(-1,1){8}}
      \put(5,21){\line(0,1){8}}
      \put(25,21){\line(0,1){8}}
      \put(6,21){\line(2,1){18}}
      \put(15,42){\makebox(0,0)[b]{$\mu$}}
      \put(15,8){\makebox(0,0)[t]{$\varepsilon$}}
      \put(3,30){\makebox(0,0)[r]{$P$}}
      \put(3,20){\makebox(0,0)[r]{$R$}}
      \put(27,30){\makebox(0,0)[l]{$S$}}
      \put(27,20){\makebox(0,0)[l]{$Q$}}
    \end{picture}
    &
    \begin{picture}(40,50)
      \put(15,45){\blob}
      \put(5,35){\blob}
      \put(25,35){\blob}
       \put(15,25){\blob}
       \put(35,25){\blob}
        \put(5,15){\blob}
        \put(25,15){\blob}
        \put(15,5){\blob}
        \put(15.5,5.5){\line(1,1){9}}
        \put(25.5,15.5){\line(1,1){9}}
        \put(5.5,15.5){\line(1,1){9}}
        \put(15.5,25.5){\line(1,1){9}}
        \put(5.5,35.5){\line(1,1){9}}
        \put(14.5,5.5){\line(-1,1){9}}
        \put(24.5,15.5){\line(-1,1){9}}
        \put(34.5,25.5){\line(-1,1){9}}
        \put(14.5,25.5){\line(-1,1){9}}
        \put(24.5,35.5){\line(-1,1){9}}
        \put(15,46){\makebox(0,0)[b]{$U$}}
        \put(4,35){\makebox(0,0)[r]{$P$}}
        \put(27,35){\makebox(0,0)[l]{$S$}}
        \put(37,25){\makebox(0,0)[l]{$Q$}}
        \put(16,25){\makebox(0,0)[l]{$P\wedge S$}}
        \put(4,15){\makebox(0,0)[r]{$R$}}
        \put(25,13){\makebox(0,0)[l]{$P\wedge Q$}}
        \put(15,4){\makebox(0,0)[t]{$E$}}
      \end{picture}
    \\
    \hline
    \end{tabular}
  \end{center}
  \caption{Some orthogonal block structures in \cite{ARL}\label{fig:ARL}}
    \end{figure}

\subsection{Unifying the theory}
\label{s:unify}

In \cite{TPSRAB}, Speed and Bailey aimed to combine the two approaches by explaining
Nelder's ``simple orthogonal block structures'' and Throckmorton's  ``complete balanced
block structures'' as ``association schemes derived from finite distributive lattices of
commuting uniform equivalence relations''.  They noted that the words ``permutable'' and
``permuting'' were sometimes used  in place of  ``commuting''.  Each partition is
defined by a ``hereditary'' subset of the poset~$M$. 
This is the dual notion to down-set.  A subset~$H$ of $M$ is hereditary if, whenever $m\in H$
and $m\sqsubseteq m'$, then $m'\in H$.  Then $\Omega = \Omega_1 \times \cdots \times
\Omega_N$ (where $N=\left|M\right|$).  Two elements $(\alpha_1, \ldots, \alpha_N)$ and
$(\beta_1, \ldots, \beta_N)$ are in the same part of the partition $\Pi_H$ if and only if
$\alpha_i=\beta_i$ for all $i$ in $H$.

To match the partial order on partitions to the partial order $\subseteq$ on subsets of~$M$,
they defined $\preccurlyeq$ in the opposite way to what we do here. They 
proved that every distributive block structure is isomorphic to a poset block structure, but did
not use the latter term, even though they showed that the construction depends on a
partially ordered set.

They also explained that most of the theory extends to what we now call an \textit{orthogonal
  block structure}, where the lattice is modular but not necessarily distributive.
Figure~\ref{fig:SB} shows the corresponding Hasse diagrams in their two examples.
In the one on the left, the non-trivial partitions form the rows, columns, and the Greek and Latin letter partitions of a pair of mutually orthogonal Latin squares. Note that the underlying set has
size~$n^2$ with $n \notin \{1,2,6\}$, since it is well known that there exists a pair of two mutually orthogonal Latin squares of order $n$ if and only if $n\notin \{1,  2, 6\}$.  One way of achieving the one on the right is to use
some carefully chosen subgroups of the elementary abelian group of order~$16$.

\begin{figure}
  \setlength{\unitlength}{1mm}
  \begin{center}
       \begin{tabular}{cc}
 \hline
         \begin{picture}(40,50)
      \put(20,15){\blob}
      \put(20,35){\blob}
      \put(5,25){\blob}
      \put(15,25){\blob}
      \put(25,25){\blob}
      \put(35,25){\blob}
      \put(20.5,15.5){\line(3,2){14}}
      \put(20.3,15.2){\line(1,2){4.5}}
       \put(19.5,15.5){\line(-3,2){14}}
       \put(19.7,15.2){\line(-1,2){4.5}}
        \put(20.5,34.5){\line(3,-2){14}}
      \put(20.3,34.8){\line(1,-2){4.5}}
       \put(19.5,34.5){\line(-3,-2){14}}
       \put(19.7,34.8){\line(-1,-2){4.5}}
    \end{picture}
         &
         \begin{picture}(50,50)
           \put(25,5){\blob}
           \put(15,15){\blob}
           \put(25,15){\blob}
           \put(35,15){\blob}
             \put(5,25){\blob}
                      \put(15,25){\blob}
           \put(25,25){\blob}
           \put(35,25){\blob}
           \put(45,25){\blob}
                      \put(15,35){\blob}
           \put(25,35){\blob}
           \put(35,35){\blob}
           \put(25,45){\blob}
           \put(25.5,5.5){\line(1,1){9}}
           \put(35.5,15.5){\line(1,1){9}}
            \put(15.5,15.5){\line(1,1){9}}
            \put(25.5,25.5){\line(1,1){9}}
              \put(5.5,25.5){\line(1,1){9}}
              \put(15.5,35.5){\line(1,1){9}}
              \put(24.5,5.5){\line(-1,1){9}}
              \put(14.5,15.5){\line(-1,1){9}}
               \put(34.5,15.5){\line(-1,1){9}}
                \put(24.5,25.5){\line(-1,1){9}}
 \put(44.5,25.5){\line(-1,1){9}}
 \put(34.5,35.5){\line(-1,1){9}}
 \put(25,5.5){\line(0,1){9}}
 \put(25,15.5){\line(0,1){9}}
 \put(35,15.5){\line(0,1){9}}
 \put(15,15.5){\line(0,1){9}}
  \put(25,25.5){\line(0,1){9}}
 \put(35,25.5){\line(0,1){9}}
 \put(15,25.5){\line(0,1){9}}
  \put(25,35.5){\line(0,1){9}}
         \end{picture}
    \\
    \hline
    \end{tabular}
\end{center}
  \caption{Hasse diagrams of two non-distributive orthogonal block structures\label{fig:SB}}
\end{figure}

In \cite{Geelong}, Bailey restricted attention to  distributive block structures, using the term
``ancestral subset'' in place of ``hereditary subset'' and drawing the Hasse diagrams in the
way consistent with our current use of the refinement partial order $\preccurlyeq$.
This cited \cite{Zys} as well as \cite{Throck}, and commented that Holland \cite{Holl}
``defines the automorphism group of a poset block structure to be a
\textit{generalised wreath product}''.  The explicit form for such a group was given in \cite{bprs},
following the arguments in \cite{Holl}. 

Paper \cite{bprs} gives a formal definition of \textit{poset block structure}
and an auto\-morphism of such a structure.  It shows that, in the finite case,
the auto\-morphism group is the generalised wreath product of the relevant
symmetric groups.  The argument draws on work of Wells \cite{Well} for
semi-groups.  The paper also states that, in the finite case, the
generalised wreath product
of permutation groups is the same as that constructed by \cite{Holl, Sill}.

In \cite{SB2}, Speed and Bailey discuss \textit{factorial dispersion models}, which
are statistical models whose underlying structure is a poset block structure. Now
hereditary subsets are called \textit{filters} and the refinement partial order is shown
in the same way as we do here.

Papers \cite{Geelong,bprs,SB2} have the disadvantage that the partial order on the
subsets of~$M$ is the wrong way up for inclusion.  In the current paper, our use of
down-sets rather than hereditary subsets gets  round this problem.

In \cite{HS}, Houtman and Speed extend the meaning of ``orthogonal block structure''
to mean a particular desirable property of  covariance matrices.  This is even more general
than their being based on an association scheme, so we do not use that meaning here.

The survey paper \cite{DCC} explains the combinatorial aspects of all these
ideas in more detail.  It notes that a ``complete balanced response structure''
is not necessarily a  poset block structure, but can always be extended to one
by the inclusion of infima. 

It also discusses automorphisms.  In the present
paper, an automorphism of a poset block structure is a permutation of the
base-set $\Omega$ which preserves each of the relevant partitions.
In \cite{DCC,J}, this is called a ``strong automorphism'', while a ``weak
automorphism'' preserves the set of these partitions. These are called
``strict automorphism'' and ``automorphism'', respectively, in \cite{Par}.

If there are non-identity weak automorphisms, then under suitable conditions
we can extend our group by adjoining these. We do not discuss this here, but
note that three of the types of primitive group in the celebrated
O'Nan--Scott theorem~\cite{scott} can be realised in this way: affine groups,
wreath products with product action, and diagonal groups.

\subsection{Statistics and group theory}
Why do statisticians care about these groups?  First, because of the need to randomise.
An experimental design is an allocation of treatments to the elements of the base-set
$\Omega$. 
To avoid possible bias, this
allocation is then randomised by applying a permutation chosen at random from the
automorphism group of the block structure. Denote by $Y_\alpha$ the random variable for
the response on plot~$\alpha$.  The method of randomisation allows us to assume that the
covariance of $Y_\alpha$ and $Y_\beta$ is equal to the covariance of $Y_\gamma$ and
$Y_\delta$ (but unknown in advance) if and only if $(\gamma,\delta)$ is in the same orbit
of the action of the automorphism group on $\Omega \times \Omega$ as at least one
of $(\alpha,\beta)$ and $(\beta,\alpha)$.

For the full generalised wreath product of symmetric groups, these orbits on pairs are
precisely the association classes of the association scheme described in \cite{TPSRAB}.
Thus the eigenspaces of the covariance matrix are known in advance of data collection.
These eigenspaces are called \textit{strata} in \cite{JAN1,JAN2}.  Data can be projected onto
each stratum for a straightforward analysis.

Now suppose that each symmetric group $G_i$ in the generalised wreath product is
replaced by a subgroup $H_i$.  Lemma~11 in \cite{bprs}
shows that the eigenspaces are known
in advance if and only if the permutation character of the generalised wreath product
is multiplicity-free (or a slight weakening of this, because the covariance-matrix must
be symmetric).  In particular, so long as each subgroup $H_i$ is doubly transitive then the
strata are the same as they are for the generalised wreath product of symmetric groups.

Paper~\cite{TPSRAB} concludes with acknowledgements to several people, including
P.~J.~Cameron and D.~E.~Taylor.  These two had explained to the authors of
\cite{TPSRAB} the importance of having
a permutation character which is multiplicity-free.

\section{Permutation Groups}
\label{s:pg}

In this section, we consider transitive permutation groups, and say that such
a group $G$ has the \emph{OB property} (respectively, the \emph{PB property}) if the
$G$-invariant partitions form an orthogonal block structure (respectively, a poset
block structure). We examine the behaviour of these properties under various
products of permutation groups. Our major result is a proof that any transitive
group $G$ with the PB property is embeddable in a generalised wreath product of
transitive groups extracted from $G$. 

\subsection{Introduction to OB groups}
Let $G$ be a transitive permutation group on $\Omega$. The set of all
$G$-invariant partitions satisfies the first three of the four conditions listed in
Section~\ref{sec:OBS}
for an orthogonal block structure. When does it satisfy the fourth?
We will say that $G$ has the OB property if the fourth condition holds.

We observe that, for a given point $\alpha\in\Omega$, there is a natural
order-preserving bijection between $G$-invariant partitions of $\Omega$
and subgroups of $G$ containing $G_\alpha$: if $G_\alpha\le H\le G$, then
$\alpha H$ is a part of a $G$-invariant partition; in the other direction,
if $\Pi$ is a $G$-invariant partition, the corresponding subgroup is the 
setwise stabiliser of the part of $\Pi$ containing $\alpha$. If $\Pi_1$ and
$\Pi_2$ correspond to $H$ and $K$, then $\Pi_1\wedge\Pi_2$ corresponds to
$H\cap K$, and $\Pi_1\vee\Pi_2$ corresponds to $\langle H,K\rangle$.
(The result for join is in \cite{a-mcs}, and for meet 
\cite[Theorem 1.5A]{dm}.


\begin{theorem}
Suppose that $G$-invariant partitions $\Pi_1$ and $\Pi_2$ correspond to
subgroups $H$ and $K$ containing $G_\alpha$. Then $\Pi_1$ and $\Pi_2$
commute if and only if $HK=KH$.
\label{t:com}
\end{theorem}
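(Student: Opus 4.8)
The plan is to translate both sides of the claimed equivalence into statements about cosets of $G_\alpha$. Fix the point $\alpha$ and identify $\Omega$ with the set of right cosets $G_\alpha\backslash G$ via $\alpha g\mapsto G_\alpha g$, so that $G$ acts by right multiplication. Under this identification, since $\Pi_1$ is $G$-invariant and its part containing $\alpha$ is $\alpha H$, the part containing $\alpha g$ is $\alpha Hg$; hence $\alpha g_1$ and $\alpha g_2$ lie in the same part of $\Pi_1$ precisely when $\alpha Hg_1=\alpha Hg_2$, i.e.\ when $g_1g_2^{-1}$ stabilises the set $\alpha H$, i.e.\ (by the definition of the partition--subgroup correspondence recalled before the theorem) when $g_1g_2^{-1}\in H$. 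Thus $R_1=\{(\alpha g_1,\alpha g_2):g_1g_2^{-1}\in H\}$, and likewise $R_2=\{(\alpha g_1,\alpha g_2):g_1g_2^{-1}\in K\}$; the hypothesis $G_\alpha\le H$ (resp.\ $G_\alpha\le K$) is exactly what makes these descriptions independent of the chosen coset representatives.

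Next I would compute the two compositions directly from the definition of composition of relations. A pair $(\alpha g_1,\alpha g_2)$ lies in $R_1\circ R_2$ if and only if there is some $g_3\in G$ with $g_1g_3^{-1}\in H$ and $g_3g_2^{-1}\in K$; putting $h=g_1g_3^{-1}$ and $k=g_3g_2^{-1}$ gives $g_1g_2^{-1}=hk\in HK$, and conversely a factorisation $g_1g_2^{-1}=hk$ with $h\in H$ and $k\in K$ is witnessed by $g_3=h^{-1}g_1$. Hence $R_1\circ R_2=\{(\alpha g_1,\alpha g_2):g_1g_2^{-1}\in HK\}$, and by the symmetric argument $R_2\circ R_1=\{(\alpha g_1,\alpha g_2):g_1g_2^{-1}\in KH\}$. (That these are genuine relations on $\Omega$, i.e.\ that the condition is insensitive to the choice of representatives, follows just as in the previous step, since $HK$ and $KH$ both contain $G_\alpha$ on the left and on the right.)

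To finish, observe that every element of $G$ arises as $g_1g_2^{-1}$ — for instance taking $g_2=1$ — so the relations $R_1\circ R_2$ and $R_2\circ R_1$ are equal if and only if $HK=KH$ as subsets of $G$. Since $\Pi_1$ and $\Pi_2$ commute means, by definition, that $R_1\circ R_2=R_2\circ R_1$ (equivalently, by the first Proposition of Section~\ref{sec:partlat}, that this common composition equals $R_1\vee R_2$), this is exactly the statement of the theorem.

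I do not anticipate a real obstacle here: the heart of the matter is the familiar dictionary between $G$-invariant partitions and overgroups of a point stabiliser, together with an elementary manipulation of products of subgroups. The only points requiring care are bookkeeping — keeping the side of the cosets and the direction of the action consistent throughout — and checking representative-independence, which is where the hypotheses $G_\alpha\le H$ and $G_\alpha\le K$ enter. If one prefers to avoid coordinates, the same three steps go through verbatim with setwise stabilisers of blocks in place of cosets of $G_\alpha$.
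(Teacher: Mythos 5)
Your proposal is correct and takes essentially the same approach as the paper: both arguments rest on the dictionary between $G$-invariant partitions and overgroups of $G_\alpha$, translating the compositions $\Pi_1\circ\Pi_2$ and $\Pi_2\circ\Pi_1$ into the subgroup products $HK$ and $KH$ based at the point $\alpha$. The only difference is presentational — you compute the relations on all pairs $(\alpha g_1,\alpha g_2)$ at once with the representative-independence checks made explicit, while the paper works with the part containing $\alpha$ and handles the two directions separately — so the substance is identical.
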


\begin{proof} 
Suppose that $HK=KH$. Then $HK$ is a subgroup, and is equal to
$\langle H,K\rangle$. 
The points $\beta$ such that $(\alpha,\beta)\in\Pi_1\circ\Pi_2$ (respectively,
$\Pi_2\circ\Pi_1$, $\Pi_1\vee\Pi_2$) are those that can be reached from $\alpha$
by applying an element of $HK$ (respectively, $KH$, $\langle H,K\rangle$). So the
three relations are all equal.

Conversely, suppose that $\Pi_1$ and $\Pi_2$ are the $G$-invariant partitions
corresponding to $H$ and $K$, and that $\Pi_1\circ\Pi_2=\Pi_1\vee\Pi_2$.
In particular, this holds for the part containing $\alpha$. So any point in
this part can be reached from $\alpha$ by first moving to a point $\beta$ in
the same part of $\Pi_1$, then to a point $\gamma$ in the same part of $\Pi_2$
as $\beta$. Since the stabiliser of the part of $\Pi_1$ containing $\alpha$ is~$H$,
we have $\beta=\alpha h$ for some $h\in H$. Then the part of $\Pi_2$
containing $\beta$ is obtained by mapping the part containing $\alpha$
by $h$, so its stabiliser is $K^h$; so $\gamma=\beta h^{-1}kh$
for some $k\in K$. Thus $\gamma=\alpha kh$. We conclude that the
part of $\Pi_1\vee\Pi_2$ containing $\alpha$ is $\alpha KH$. Because
the partitions commute, this part is also equal to $\alpha HK$. We now claim that this implies that $HK=KH$.

Let $g_1\in HK$. Then $\alpha g_1\in \alpha HK = \alpha KH$, so there exists some $g_2 \in KH$ such that $\alpha g_1 = \alpha g_2$. It follows that $\alpha g_1g_2^{-1} = \alpha$, and so $g_1g_2^{-1}\in G_{\alpha}$, which in turn implies that $g_1 \in G_{\alpha}g_2 \subseteq G_{\alpha}KH$. But since $G_{\alpha} \leq H$ and $G_{\alpha} \leq K$, we have $G_{\alpha}KH = KH$, and so $g_1 \in KH$. Therefore $HK \subseteq KH$. By symmetry, we also get $KH \subseteq HK$, and thus $HK = KH$, as claimed.
\qed
\end{proof}

\begin{cor}
$G$ has the OB property if and only if, for any two subgroups $H$ and $K$
between $G_\alpha$ and $G$, we have $HK=KH$.
\label{c:permut}
\end{cor}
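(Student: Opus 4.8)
The plan is to deduce the corollary directly from Theorem~\ref{t:com} together with the order-preserving bijection, recorded just before that theorem, between the $G$-invariant partitions of $\Omega$ and the subgroups of $G$ lying between $G_\alpha$ and $G$. By definition $G$ has the OB property exactly when condition~(d) of Section~\ref{sec:OBS} holds for the lattice of $G$-invariant partitions, that is, when any two $G$-invariant partitions commute (conditions (a)--(c) hold automatically for the $G$-invariant partitions, as already noted). So the task reduces to showing that ``$\Pi_1$ and $\Pi_2$ commute for every pair of $G$-invariant partitions $\Pi_1,\Pi_2$'' is equivalent to ``$HK=KH$ for every pair of subgroups $H,K$ with $G_\alpha\le H,K\le G$''.

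First I would fix $\alpha\in\Omega$ and invoke the bijection: each $G$-invariant partition $\Pi$ corresponds to the subgroup $H$ which is the setwise stabiliser of the part of $\Pi$ containing $\alpha$, and conversely each subgroup $H$ with $G_\alpha\le H\le G$ gives rise to the $G$-invariant partition with parts the $H$-orbits. Since this is a bijection between the two families, a statement quantified over all pairs of $G$-invariant partitions is equivalent to the same statement quantified over all pairs of intermediate subgroups. Then, applying Theorem~\ref{t:com} to each such pair, ``$\Pi_1$ commutes with $\Pi_2$'' is equivalent to ``$HK=KH$''; quantifying over all pairs yields the claimed equivalence.

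The only point requiring any care — and essentially the only thing to verify — is that the correspondence $\Pi\leftrightarrow H$ is genuinely a bijection onto \emph{all} intermediate subgroups, so that the two quantifications really do match up; but this is exactly the fact cited before Theorem~\ref{t:com}. Beyond that there is no obstacle: all of the substance lies in Theorem~\ref{t:com}, and the corollary is a short translation of it.
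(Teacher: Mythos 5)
Your proof is correct and takes essentially the same route as the paper, whose own proof is just the observation that the corollary is Theorem~\ref{t:com} applied to every pair of $G$-invariant partitions (equivalently, every pair of subgroups containing $G_\alpha$) via the order-preserving bijection recorded beforehand. One incidental slip worth noting: the partition attached to a subgroup $H$ with $G_\alpha\le H\le G$ is not the orbit partition of $H$ (which need not be $G$-invariant) but the partition whose parts are the $G$-translates of $\alpha H$; this does not affect your argument, since only the existence of the bijection is used.
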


\begin{proof} This simply means that the conditions of Theorem~\ref{t:com} hold
for all $G$-invariant partitions (or all subgroups containing $G_\alpha$). \qed
\end{proof}

Subgroups $H$ and $K$ are said to \emph{commute} if $HK=KH$. Thus a transitive
permutation group has the OB property if any two subgroups containing a given
point stabiliser commute. (Note: In the literature the term ``permute'' is
often used for this concept; since our subject is permutation groups, we feel
that ``commute'' is less confusing.)

In some cases we can describe all the orthogonal block structures arising from
OB groups.
\begin{enumerate}
\item If the degree $n$ is prime, then a transitive permutation group of 
degree $n$ preserves only the trivial partitions, so it is OB, with the
corresponding OBS being trivial.
\item Suppose that $n=pq$, where $p$ and $q$ are distinct primes. If $G$ is
OB, then it has at most one invariant partition with parts of size $p$, and at
most one with parts of size $q$. Thus, if $G$ is imprimitive, the OBS preserved
by $G$ is obtained from the trivial structures on $p$ and $q$ points either by
crossing or by nesting in either order. Thus $G$ is embedded either in the
direct product or the wreath product (in some order) of transitive groups of
degrees $p$ and $q$.
\item Suppose that $n = p^2$ for some prime $p$ and that there are more than two non-trivial $G$-invariant partitions. Then each such partition has $p$ parts of size $p$, and any two have meet $E$ and join $U$. Thus any three of these partitions give the structure of a Latin square $L$ to the underlying set $\Omega$.

Let $P$ be a Sylow $p$-subgroup of $G$. The stabiliser in $P$ of a part of a non-trivial $G$-invariant partition $\Pi_1$ has index $p$ in $G$ by the Orbit-Stabiliser Theorem, and fixes all parts of $\Pi_1$. If $\Pi_2$ is another such partition, then the stabiliser in $P$ of a part in each of $\Pi_1$ and $\Pi_2$ fixes all parts of $\Pi_1\wedge\Pi_2$, that is, it is the identity. So $|P|=p^2$; and $P$, having more than one subgroup of index~$p$, is the elementary abelian group. Since $P$ induces a cyclic group $C_p$ of order $p$ on each of the sets of rows, columns and letters of $L$, we see that $L$ is the Cayley table of~$C_p$.

The automorphism group of this Latin square is $(C_p\times C_p)\colon C_{p-1}$. So $G$ is contained in this group. But $C_p\times C_p$ has $p+1$ non-trivial invariant partitions (corresponding to its $p+1$ subgroups of order $p$) and $C_{p-1}$ fixes just two of these partitions and permutes the other $p-1$ regularly. Since $G$ fixes at least three partitions, we conclude that $G=C_p\times C_p$. The $p+1$ non-trivial $G$-invariant partitions together with $U$ and $E$ form an orthogonal block structure which is not a poset block structure.
\end{enumerate}

(Transitive groups of degree $pq$ may not be OB. If $q\mid p-1$, then the
nonabelian group of order $pq$, acting regularly, has $p$ invariant partitions
each with $p$ parts of size $q$; these do not commute. In other words, the
subgroups of order $q$ do not commute.)

\subsection{Properties of OB groups}

\subsubsection{General results}

A transitive permutation group $G$ is \emph{pre-primitive} (see~\cite{a-mcs})
if every $G$-invariant partition is the orbit partition of a subgroup of $G$.
As explained in that paper, we may assume that this subgroup of $G$ is normal.

\begin{cor}
If $G$ is pre-primitive, then it has the OB property.
\end{cor}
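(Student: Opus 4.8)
The plan is to reduce the OB property, via Corollary~\ref{c:permut}, to the statement that any two subgroups $H$ and $K$ with $G_\alpha \le H, K \le G$ commute, i.e.\ $HK = KH$. So first I would fix a point $\alpha$ and take two such subgroups $H$ and $K$. Since $G$ is pre-primitive, the $G$-invariant partition $\Pi_1$ corresponding to $H$ is the orbit partition of some normal subgroup $N \trianglelefteq G$, and similarly $\Pi_2$ (corresponding to $K$) is the orbit partition of some normal subgroup $N' \trianglelefteq G$. The part of $\Pi_1$ containing $\alpha$ is then the $N$-orbit $\alpha N$, and its setwise stabiliser is $G_{\alpha N}$; but from the correspondence described in the excerpt this stabiliser is exactly $H$. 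So $H = G_\alpha N$ (the orbit $\alpha N = \alpha(G_\alpha N)$ and $G_\alpha N$ is a subgroup containing $G_\alpha$ with that orbit), and likewise $K = G_\alpha N'$.

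Next I would simply compute $HK$ and $KH$ using normality. We have $HK = (G_\alpha N)(G_\alpha N')$. Since $N \trianglelefteq G$, the set $N G_\alpha = G_\alpha N$ is a subgroup, and similarly $N' G_\alpha = G_\alpha N'$. Moreover, because $N$ and $N'$ are normal, $NN' = N'N$ is a subgroup. Then $HK = G_\alpha N G_\alpha N' = G_\alpha N N' = G_\alpha N' N = G_\alpha N' G_\alpha N = KH$, where I use that $N G_\alpha = G_\alpha N$ and $N' N = N N'$ to shuffle factors past each other, and that $G_\alpha$ is absorbed into the normal-closure products. (One should be slightly careful: $G_\alpha N G_\alpha N'$ equals $G_\alpha N N'$ because $N G_\alpha = G_\alpha N$ gives $G_\alpha N G_\alpha = G_\alpha G_\alpha N = G_\alpha N$.) Hence $HK = KH$, and by Corollary~\ref{c:permut}, $G$ has the OB property.

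The main thing to get right—and the only real subtlety—is the identification $H = G_\alpha N$ from the hypothesis that $\Pi_1$ is the orbit partition of the normal subgroup $N$; this uses the order-preserving bijection between $G$-invariant partitions and overgroups of $G_\alpha$, together with the remark in the excerpt (citing \cite{a-mcs}) that in a pre-primitive group one may take the witnessing subgroup to be normal. Once that identification is in place, the commutation $HK = KH$ is a routine manipulation with normal subgroups, so I do not expect any genuine obstacle; the proof is short. An alternative, essentially equivalent, route avoids subgroups entirely: if $\Pi_1, \Pi_2$ are orbit partitions of normal subgroups $N, N'$, then $\Pi_1 \circ \Pi_2$ relates $\beta$ to $\gamma$ iff $\gamma \in \beta N N'$, while $\Pi_2 \circ \Pi_1$ relates them iff $\gamma \in \beta N' N$; since $NN' = N'N$, the two compositions agree, so every pair of $G$-invariant partitions commutes. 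Either way the write-up is brief.
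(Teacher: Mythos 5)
Your proposal is correct and follows essentially the same route as the paper: the paper's own proof notes that the invariant partitions are orbit partitions of normal subgroups and that normal subgroups commute (your ``alternative route''), and immediately afterwards it gives precisely your main computation $HK=N_HG_\alpha\cdot N_KG_\alpha=N_HN_KG_\alpha=N_KN_HG_\alpha=KH$. Your care over the identification $H=G_\alpha N$ via the overgroup--partition bijection is a sound filling-in of a step the paper leaves implicit.
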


\begin{proof} If $G$ is pre-primitive, then the $G$-invariant partitions
  are orbit partitions of normal subgroups of $G$; and normal subgroups commute,
  so the corresponding partitions commute.\qed
\end{proof}

Both properties can be expressed in group-theoretic terms.
Thus, the transitive permutation group $G$ is pre-primitive if and only if
$G_\alpha$ has a normal supplement in every overgroup (that is, every overgroup
has the form $N_HG_\alpha$, where $N_H$ is a normal subgroup of $G$). By
Theorem~\ref{t:com}, $G$ is OB if and only if all the subgroups containing
$G_\alpha$ commute. If $H=N_HG_\alpha$ and $K=N_KG_\alpha$, with 
$N_H, N_K$ normal in $G$, then $HK=N_HG_\alpha.N_KG_\alpha=N_HN_KG_\alpha
=N_KN_HG_\alpha$, so $HK=KH$.

\begin{cor}
Suppose that the $G$-invariant partitions form a chain under $\preccurlyeq$. Then
$G$ has the OB property.
\label{c:chain}
\end{cor}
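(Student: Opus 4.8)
The plan is to derive this as an immediate consequence of Corollary~\ref{c:permut}. If the $G$-invariant partitions form a chain under $\preccurlyeq$, then by the order-preserving bijection between $G$-invariant partitions and subgroups lying between $G_\alpha$ and $G$, the set of such subgroups is linearly ordered by inclusion. So it suffices to observe that any two subgroups $H$ and $K$ with $H\le K$ (or $K\le H$) automatically commute: if $H\le K$ then $HK=K=KH$, and symmetrically if $K\le H$.

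First I would state that, for a fixed $\alpha\in\Omega$, the correspondence $\Pi\mapsto G_{\{\alpha\text{-part}\}}$ sends the chain of $G$-invariant partitions to a chain of overgroups of $G_\alpha$ in $G$, this being the content of the order-preserving bijection recalled just before Theorem~\ref{t:com}. Then I would take arbitrary $H,K$ between $G_\alpha$ and $G$, note that by comparability one contains the other, and conclude $HK=KH$ in that case. Finally, invoking Corollary~\ref{c:permut}, $G$ has the OB property.

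There is essentially no obstacle here: the only thing to be careful about is that we really do get \emph{all} pairs of overgroups of $G_\alpha$ from the chain, which is exactly why Corollary~\ref{c:permut} (rather than a single application of Theorem~\ref{t:com}) is the right tool — it quantifies over all such pairs. An alternative, lattice-theoretic route would be to note that a chain is trivially distributive, hence modular, but that only gives modularity of the lattice, not commutativity of the equivalence relations; so the subgroup-theoretic argument via Corollary~\ref{c:permut} is the clean one, and I would present just that.
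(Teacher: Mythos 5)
Your proof is correct. The only difference from the paper is the level at which you run the argument: the paper's proof is a one-liner directly at the level of partitions --- if $\Pi_1\preccurlyeq\Pi_2$ then the two relations commute (indeed $\Pi_1\circ\Pi_2=\Pi_2\circ\Pi_1=\Pi_2$), so in a chain every pair of $G$-invariant partitions commutes and condition (d) of the OBS definition holds --- with no appeal to the subgroup correspondence at all. You instead translate through the order-preserving bijection with overgroups of $G_\alpha$ and apply Corollary~\ref{c:permut}, observing that nested subgroups satisfy $HK=K=KH$. The two arguments are exact mirror images under the dictionary set up before Theorem~\ref{t:com}, so nothing is gained or lost logically; the paper's route is marginally more economical since it avoids invoking Theorem~\ref{t:com}/Corollary~\ref{c:permut} and the (order-preserving) bijection, while yours has the small virtue of staying entirely inside the group-theoretic formulation that the rest of the section uses. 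Your closing remark is also right: chains being distributive only gives modularity of the lattice, which is weaker than commutativity, so that route would not suffice.
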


\begin{proof}
If $\Pi_1\preccurlyeq\Pi_2$, then $\Pi_1$ and $\Pi_2$ commute.\qed
\end{proof}

A transitive permutation group $G$ is \emph{primitive} if the only
$G$-invariant partitions are the trivial ones (the partition $E$ into singletons
and the partition $U$ with a single part); it is \emph{quasiprimitive} if every
non-trivial normal subgroup of $G$ is transitive. It was observed in 
\cite{a-mcs} that pre-primitivity and quasi\-primitivity together are equivalent
to primitivity. However, this is not the case if we replace pre-primitivity
by the OB property.

For example, the transitive actions of $S_5$ and $A_5$ on $15$ points (on the cosets of a Sylow 2-subgroup) are both
quasi\-primitive but not pre-primitive. However, there is a unique non-trivial
invariant partition in each case, with $5$ parts each of size~$3$; so, by
Corollary~\ref{c:chain}, these groups are OB.

\medskip

Another related concept is that of stratifiability, see~\cite{rab:strat,cap}.
The permutation group $G$ on $\Omega$ is \emph{stratifiable} if the orbits of
$G$ on \emph{unordered} pairs of points of $\Omega$ form an association scheme. 
Since the relations in an association scheme commute, this is equivalent to
saying that the symmetric $G$-invariant relations commute. Since equivalence
relations are symmetric, we conclude:

\begin{prop}
A stratifiable permutation group has the OB property.
\end{prop}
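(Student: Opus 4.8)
The plan is to exploit the one-line observation that an equivalence relation, viewed as a subset of $\Omega\times\Omega$, is symmetric. A $G$-invariant partition corresponds to a $G$-invariant equivalence relation, so any two $G$-invariant partitions are in particular two symmetric $G$-invariant relations. Hence the OB property --- condition~(d), that any two $G$-invariant partitions commute --- will follow at once from stratifiability as soon as we know that stratifiability forces every pair of symmetric $G$-invariant relations to commute. (Conditions~(a)--(c) hold for the invariant partitions of any transitive group, so nothing extra is needed there.)

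For the middle step I would recall why stratifiability has this consequence. The symmetric $G$-invariant relations are precisely the unions of sets of the form $O\cup O^{*}$, where $O$ is a $G$-orbit on ordered pairs and $O^{*}$ is its converse, possibly together with the diagonal; when $G$ is stratifiable these sets (with the diagonal) are the association classes of the orbit scheme on unordered pairs. Its Bose--Mesner algebra is spanned by the symmetric zero-one matrices of the classes, so by the first remark at the end of Section~\ref{sec:OBS} it is commutative, and in particular the matrices of any two classes commute. A general symmetric $G$-invariant relation has for its matrix a zero-one sum of pairwise-commuting class matrices, so the matrices of any two symmetric $G$-invariant relations commute as integer matrices; taking supports, the relations commute under composition. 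This is exactly the equivalence asserted in the sentence preceding the statement, so in the write-up one may simply cite it rather than reprove it.

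Finally I would assemble the pieces. Given $G$-invariant partitions $\Pi_{1},\Pi_{2}$ with equivalence relations $R_{1},R_{2}$, symmetry and $G$-invariance give $R_{1}\circ R_{2}=R_{2}\circ R_{1}$ by the previous paragraph, which is precisely the statement that $\Pi_{1}$ and $\Pi_{2}$ commute (and, by the opening proposition of Section~\ref{sec:partlat}, also equal to $R_{1}\vee R_{2}$). Since this holds for every pair, condition~(d) is met and $G$ has the OB property. I do not expect a genuine obstacle here: the only point that needs care is checking that ``commute'' in the association-scheme sense (commuting relation matrices, equivalently commuting Boolean compositions) coincides with ``commute'' in the sense used to define orthogonal block structures, which is why I would make the passage through supports of zero-one matrices explicit rather than leave it implicit.
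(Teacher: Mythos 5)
Your argument is correct and follows essentially the same route as the paper, which disposes of the proposition in the sentence preceding it: stratifiability makes the symmetric $G$-invariant relations the classes (and unions of classes) of a commutative Bose--Mesner algebra, so they commute, and equivalence relations are symmetric. Your only addition is to make explicit the passage from commuting zero-one matrices to commuting relations via supports, which the paper leaves implicit.
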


The paper~\cite{cap} defines a related property for a transitive permutation 
group $G$, that of being \emph{AS-friendly}: this holds if there is a unique
finest association scheme which is $G$-invariant. It is easy to see that a
stratifiable group is AS-friendly. So we could ask, is there any relation
between being AS-friendly and having the OB property?

\medskip

In common with many other permutation group properties, the following holds:

\begin{prop}
  The OB property is upward-closed; that is, if $G$ has the OB property
  and $G\le H\le\Sym(\Omega)$ then $H$ has the OB property.
\label{p:up}
\end{prop}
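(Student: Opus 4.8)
The plan is to reduce the claim to the single observation that enlarging the group can only shrink the family of invariant partitions. First I would note that, since $G$ is transitive and $G\le H$, the group $H$ is transitive on $\Omega$ as well, so the OB property is meaningful for $H$; and, as recorded at the start of this subsection, the set of $H$-invariant partitions automatically satisfies conditions (a)--(c) of the definition of an orthogonal block structure (it is a sublattice of the partition lattice, it contains $E$ and $U$, and all its members are uniform). Hence the only thing left to verify is condition (d): that any two $H$-invariant partitions commute.

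Next I would observe that every $H$-invariant partition is $G$-invariant. Indeed, if $\Pi$ is a partition of $\Omega$ whose parts are permuted among themselves by every element of $H$, then in particular they are permuted among themselves by every element of the subgroup $G$. Thus the lattice of $H$-invariant partitions is contained in the lattice of $G$-invariant partitions.

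Finally, whether two equivalence relations commute is a property of the relations themselves, independent of any group. So if $\Pi_1$ and $\Pi_2$ are $H$-invariant, they are in particular $G$-invariant, and since $G$ has the OB property they commute; therefore any two $H$-invariant partitions commute, condition (d) holds, and $H$ has the OB property.

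There is no substantial obstacle here; the only point to be careful about is to lean on the already-established fact that conditions (a)--(c) hold automatically for the invariant partitions of any transitive group, so that (d) is genuinely all that remains to check. One could alternatively argue via Corollary~\ref{c:permut}, but transferring the condition ``all overgroups of a point stabiliser commute'' from $G$ to $H$ is less direct than the partition-lattice formulation, since the relevant overgroups (and even the point stabiliser) change when one passes from $G$ to $H$.
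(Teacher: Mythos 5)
Your proposal is correct and follows essentially the same route as the paper, which simply notes that the $H$-invariant equivalence relations form a sublattice of the $G$-invariant ones, so commutativity of the $G$-invariant partitions immediately gives condition (d) for $H$. Your additional remarks about conditions (a)--(c) and the transitivity of $H$ are harmless elaborations of the same one-line argument.
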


\begin{proof}
The $H$-invariant equivalence relations form a sublattice of the lattice of
$G$-invariant equivalence relations.\qed
\end{proof}

\subsubsection{Products}

We consider direct and wreath products of transitive groups.

\begin{theorem}
Let $G$ and $H$ be transitive permutation groups. Then $G\wr H$ (in its
imprimitive action) has the OB property if and only if $G$ and $H$ do.
\label{p:wr}
\end{theorem}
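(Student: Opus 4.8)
One direction is immediate: if $G$ and $H$ have the OB property then so does $G\wr H$, by Proposition~\ref{p:closed}. So the plan is to prove the converse: assuming $W:=G\wr H$ has the OB property, deduce the OB property for $G$ and for $H$. The tool is Corollary~\ref{c:permut}: a transitive group is OB exactly when every pair of subgroups lying between a point stabiliser and the whole group commute. Let $G$ act on $\Delta$ and $H$ on $\Gamma$, with $|\Gamma|=n$, so that $W=G^n\rtimes H$ acts imprimitively on $\Delta\times\Gamma$ with base group $G^n=\prod_{\gamma\in\Gamma}G$. Fix $\omega=(\delta_0,\gamma_0)$; its stabiliser is $W_\omega=(G_{\delta_0}\times G^{n-1})\rtimes H_{\gamma_0}$, where the distinguished factor $G_{\delta_0}$ occupies the coordinate indexed by $\gamma_0$ and $G^{n-1}=\prod_{\gamma\ne\gamma_0}G$.

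To recover the OB property of $G$, I would take any two subgroups $A$ and $B$ with $G_{\delta_0}\le A\le G$ and $G_{\delta_0}\le B\le G$, and set $\widehat A=(A\times G^{n-1})\rtimes H_{\gamma_0}$ and $\widehat B=(B\times G^{n-1})\rtimes H_{\gamma_0}$, again with $A$, $B$ placed in the $\gamma_0$-coordinate. The routine checks are that $H_{\gamma_0}$ (which permutes only the coordinates different from $\gamma_0$) normalises $A\times G^{n-1}$ and $B\times G^{n-1}$, so $\widehat A$ and $\widehat B$ are subgroups of $W$, and that both contain $W_\omega$ because $G_{\delta_0}\le A$ and $G_{\delta_0}\le B$. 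Then, moving the copies of $H_{\gamma_0}$ past the base subgroups, one finds $\widehat A\widehat B=(AB\times G^{n-1})H_{\gamma_0}$ and $\widehat B\widehat A=(BA\times G^{n-1})H_{\gamma_0}$ as subsets of $W$; inspecting the $\gamma_0$-coordinate of the base-group part shows these sets are equal precisely when $AB=BA$. Since $W$ is OB we have $\widehat A\widehat B=\widehat B\widehat A$, hence $AB=BA$; as $A$ and $B$ were arbitrary overgroups of $G_{\delta_0}$, Corollary~\ref{c:permut} gives that $G$ is OB.

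For the OB property of $H$, I would take any two subgroups $C$ and $D$ with $H_{\gamma_0}\le C\le H$ and $H_{\gamma_0}\le D\le H$, and set $\widetilde C=G^n\rtimes C$ and $\widetilde D=G^n\rtimes D$. These are obviously subgroups of $W$, and they contain $W_\omega$ since $G_{\delta_0}\times G^{n-1}\le G^n$ and $H_{\gamma_0}\le C$, $H_{\gamma_0}\le D$. Because $G^n$ is normal in $W$, one has $\widetilde C\widetilde D=G^n\,CD$ and $\widetilde D\widetilde C=G^n\,DC$ as subsets of $W$, and passing to the quotient $W/G^n\cong H$ shows these are equal if and only if $CD=DC$. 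As $W$ is OB this holds for all such $C$, $D$, so $H$ is OB, again by Corollary~\ref{c:permut}.

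There is no deep obstacle here; the one place where care is needed is the bookkeeping inside the base group $G^n$ --- verifying that $\widehat A,\widehat B,\widetilde C,\widetilde D$ genuinely lie between $W_\omega$ and $W$, that $H_{\gamma_0}$ normalises the relevant base subgroups, and that the set products collapse to the stated forms so that reading off the $\gamma_0$-coordinate (respectively, the image in $H$) faithfully detects failure of commutativity. A more structural alternative for the converse would be to observe first that every $W$-invariant partition is comparable with the block partition (each of its parts is forced by the base group $G^n$ either to lie within a single block or to be a union of whole blocks), so that the lattice of $W$-invariant partitions is the ordinal sum of the lattice of $G$-invariant partitions of $\Delta$ and the lattice of $H$-invariant partitions of $\Gamma$; pairwise commutativity then decouples across the two pieces, with pairs straddling the block partition commuting automatically because they are comparable.
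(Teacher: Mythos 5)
Your proposal is correct, but it reaches the converse by a genuinely different route from the paper. The paper works throughout at the level of invariant partitions: it invokes the fact (from the pre-primitivity paper cited as \cite{a-mcs}) that every $(G\wr H)$-invariant partition is comparable with the canonical block partition $\Pi_0$, so the invariant-partition lattice is the ordinal sum of the lattices for $G$ and for $H$; commutativity then decouples in both directions, and in particular the converse is read off by restricting partitions below $\Pi_0$ to a block and projecting partitions above $\Pi_0$ to the block set. Your converse instead goes through the Galois correspondence of Corollary~\ref{c:permut}: you lift arbitrary overgroups $A,B$ of $G_{\delta_0}$ and $C,D$ of $H_{\gamma_0}$ to the subgroups $(A\times G^{n-1})\rtimes H_{\gamma_0}$ and $G^n\rtimes C$ of the wreath product, check they lie between $W_\omega$ and $W$, and show that the set products collapse so that $\widehat A\widehat B=\widehat B\widehat A$ forces $AB=BA$ (reading the $\gamma_0$-coordinate) and $\widetilde C\widetilde D=\widetilde D\widetilde C$ forces $CD=DC$ (passing to $W/G^n\cong H$, using $G^n\cap H=1$). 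These computations are all sound, and this argument has the advantage of being self-contained: it avoids any appeal to the classification of invariant partitions of a wreath product. What it does not give by itself is the forward direction, for which you cite Proposition~\ref{p:closed}; note that that proposition (and hence the forward implication) ultimately still needs the comparability of every invariant partition with $\Pi_0$, since the crossing/nesting argument only handles partitions in the nested structure --- your closing structural paragraph, which is essentially the paper's proof, supplies exactly this missing ingredient, so the two approaches are complementary rather than redundant.
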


\begin{proof}
If $G$ and $H$ act on $\Gamma$ and $\Delta$ respectively, then $G\wr H$ acts
on $\Gamma\times\Delta$, and preserves the canonical partition
$\Pi_0$ into the sets $\Gamma_\delta=\{(\gamma,\delta):\gamma\in\Gamma\}$ for
$\delta\in\Delta$. It was shown in \cite{a-mcs} that any invariant partition
for $G\wr H$ is comparable with $\Pi_0$; the partitions below $\Pi_0$ induce
a $G$-invariant partition on each part of $\Pi_0$, while the partitions above
$\Pi_0$ induce an $H$-invariant partition on the set of parts.

Suppose that $G$ and $H$ have the OB property, and let $\Sigma_1$ and $\Sigma_2$
be $G\wr H$-invariant partitions. If one is below $\Pi_0$ and the other above,
then they are comparable, and so they commute. If both are below, then they
commute since $G$ has the OB property; and if both are above, then they
commute since $H$ has the OB property. So the OBS is obtained by nesting the
OBS for $G$ in that for $H$.

Conversely, suppose that $G\wr H$ has the OB property. Then the partitions
below $\Pi_0$ commute, so $G$ has the OB property; and the partitions above
$\Pi_0$ commute, so $H$ has the OB property.\qed
\end{proof}

\begin{cor}
  Let $G$ and $H$ be permutation groups. If $G\times H$ has the OB property
  in its product action then $G$ and $H$ both have the OB property.
\label{c:dp}
\end{cor}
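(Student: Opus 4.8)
The plan is to deduce Corollary~\ref{c:dp} from Theorem~\ref{p:wr} together with the upward-closure of the OB property (Proposition~\ref{p:up}), by exhibiting the product action of $G\times H$ as a transitive subgroup of the imprimitive wreath product $G\wr H$ on the same underlying set. Say $G$ acts on $\Gamma$ and $H$ on $\Delta$, so that $G\times H$ acts on $\Gamma\times\Delta$ by $(\gamma,\delta)^{(g,h)}=(\gamma^g,\delta^h)$, and $G\wr H$ acts on $\Gamma\times\Delta$ in its imprimitive action, with base group the set of functions $\Delta\to G$ and top group $H$. First I would check the elementary fact that the constant functions $\Delta\to G$ form a subgroup of the base group isomorphic to $G$, that this subgroup is normalised by the top group $H$, and that the subgroup of $G\wr H$ generated by the constant functions and $H$ is isomorphic to $G\times H$ and acts on $\Gamma\times\Delta$ exactly as the product action. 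This yields an embedding $G\times H\hookrightarrow G\wr H$ of permutation groups on $\Gamma\times\Delta$.

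Granting this, the corollary is immediate. By hypothesis $G\times H$ is transitive with the OB property, and $G\times H\le G\wr H\le\Sym(\Gamma\times\Delta)$, so Proposition~\ref{p:up} shows that $G\wr H$ has the OB property; then Theorem~\ref{p:wr} shows that $G$ and $H$ both have the OB property.

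I expect the only obstacle to be bookkeeping: fixing a convention for the action of $G\wr H$ and verifying that the ``diagonal-plus-top'' subgroup is genuinely closed under multiplication and acts correctly. This involves no real difficulty. If one prefers to avoid the embedding altogether, there is an equally short direct argument: for $G$-invariant equivalence relations $R_1,R_2$ on $\Gamma$, the relations $R_1\times U_\Delta$ and $R_2\times U_\Delta$, with $U_\Delta$ the universal relation on $\Delta$, are $(G\times H)$-invariant equivalence relations on $\Gamma\times\Delta$, hence commute by hypothesis; using the identity $(R_1\circ R_2)\times(U_\Delta\circ U_\Delta)=(R_1\times U_\Delta)\circ(R_2\times U_\Delta)$ recorded on page~\pageref{crossandnest}, together with the obvious fact that $A\times U_\Delta=B\times U_\Delta$ forces $A=B$, we get $R_1\circ R_2=R_2\circ R_1$. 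Hence $G$ has the OB property, and the same argument with the roles of $\Gamma$ and $\Delta$ interchanged shows $H$ does too.
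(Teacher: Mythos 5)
Your main argument is exactly the paper's proof: the paper also embeds the product action of $G\times H$ in the imprimitive wreath product $G\wr H$ (citing this containment), applies Proposition~\ref{p:up} to get the OB property for $G\wr H$, and concludes by Theorem~\ref{p:wr}. Your alternative direct argument via crossing with the universal relation $U_\Delta$ is also correct and self-contained, but it is not needed beyond the first approach.
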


\begin{proof}
As in \cite{a-mcs}, $G\times H$ is a subgroup of $G\wr H$. So, if $G\times H$
has the OB property, then $G\wr H$ has the OB property by
Proposition~\ref{p:up}, and the result holds by Theorem~\ref{p:wr}. \qed
\end{proof}

We will see later (after Theorem~\ref{t:qh}) that the converse is false.
However, we have some positive results.

First we prove some general facts about invariant partitions of direct products of an arbitrary number of groups in their product action, and slightly extend a result in~\cite{a-mcs}, proving that the direct product of an arbitrary number of primitive groups in its product action is pre-primitive. This result is interesting in its own right, but it will also be used to show that a generalised wreath product of primitive groups is pre-primitive, which constitutes a part of Theorem~\ref{thm-gwpPP}. First we give some language to describe partitions of products.

Let $G$ and $H$ act transitively on $\Gamma$ and $\Delta$ respectively, and let $\Pi$ be a $(G\times H)$-invariant partition of $\Gamma\times \Delta$. We define two partitions of $\Gamma$ in the following way:

\begin{itemize}
\item Let $P$ be a part of $\Pi$. Let $P_0$ be the subset of $\Gamma$ defined by 
\[
P_0 = \{\gamma \in \Gamma \, : \, (\exists \delta\in \Delta)((\gamma, \delta)\in P)\}.
\]
We claim that the sets $P_0$ arising in this way are pairwise disjoint. For suppose that $\gamma \in P_0\cap Q_0$, where $Q_0$ is defined similarly for another part $Q$ of $\Pi$; suppose that $(\gamma, \delta_1)\in P$ and $(\gamma, \delta_2)\in Q$. There is an element $h\in H$ mapping $\delta_1$ to $\delta_2$. Then $(1, h)$ maps $(\gamma, \delta_1)$ to $(\gamma, \delta_2)$, and hence maps $P$ to $Q$, and $P_0$ to $Q_0$; but this element acts trivially on $\Gamma$, so $P_0 = Q_0$. It follows that the sets $P_0$ arising in this way form a partition of $\Gamma$, which we call the $G$-\emph{projection partition} of $\Gamma$ induced by $\Pi$.

\item Choose a fixed $\delta\in \Delta$, and consider the intersections of the parts of $\Pi$ with $\Gamma \times \{\delta\}$. These form a partition of $\Gamma\times \{\delta\}$ and so, by ignoring the second factor, we obtain a partition of $\Gamma$ called the $G$-\emph{fibre partition} of $\Gamma$ induced by $\Pi$. Now the action of the group $\{1\}\times H$ shows that it is independent of the element $\delta\in \Delta$ chosen.
\end{itemize}

We note that the $G$-projection partition and the $G$-fibre partition are both $G$-invariant, and the second is a refinement of the first. In a similar way we get $H$-fibre and $H$-projection partitions of $\Delta$, both $H$-invariant.

\begin{prop}
Let $\Pi$ be a $G\times H$-invariant partition of $\Gamma\times\Delta$, where
$G$ and $H$ act transitively on $\Gamma$ and $\Delta$ respectively. Then
the projection and fibre partitions of $\Pi$ on $\Gamma$ are equal if and
only if $\Pi$ is obtained by crossing a $G$-invariant partition of $\Gamma$
with an $H$-invariant partition of $\Delta$.
\end{prop}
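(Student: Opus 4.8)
The plan is to prove the two implications separately; the forward direction is a routine verification, while the converse carries the real content.

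For the \emph{if} direction, suppose $\Pi$ is the crossing of a $G$-invariant partition $\mathcal{G}$ of $\Gamma$ with an $H$-invariant partition $\mathcal{H}$ of $\Delta$, so that the parts of $\Pi$ are exactly the sets $A\times B$ with $A$ a part of $\mathcal{G}$ and $B$ a part of $\mathcal{H}$. Since $B\neq\emptyset$, the $G$-projection of the part $A\times B$ is $A$, so the $G$-projection partition is $\mathcal{G}$. On the other hand, fixing $\delta\in\Delta$ and intersecting the parts of $\Pi$ with $\Gamma\times\{\delta\}$, the nonempty intersections are the sets $A\times\{\delta\}$ as $A$ runs over the parts of $\mathcal{G}$ (since $\delta$ lies in a unique part of $\mathcal{H}$), so the $G$-fibre partition is again $\mathcal{G}$. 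Hence the two partitions coincide.

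For the \emph{only if} direction, assume the $G$-projection and $G$-fibre partitions of $\Pi$ on $\Gamma$ are equal; call this common partition $\Sigma$, and let $T$ be the $H$-projection partition of $\Delta$. Both are invariant for the relevant group, as already observed. The key step is to show that \emph{every part $P$ of $\Pi$ equals $P_0\times P_0'$}, where $P_0\subseteq\Gamma$ and $P_0'\subseteq\Delta$ are the $G$- and $H$-projections of $P$. The inclusion $P\subseteq P_0\times P_0'$ is immediate. For the reverse inclusion, take $(\gamma,\delta)\in P_0\times P_0'$; by definition of the projections there exist $\delta_1$ with $(\gamma,\delta_1)\in P$ and $\gamma_1$ with $(\gamma_1,\delta)\in P$. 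Thus $(\gamma,\delta_1)$ and $(\gamma_1,\delta)$ lie in the same part $P$ of $\Pi$, so $\gamma$ and $\gamma_1$ lie in the same part of the $G$-projection partition, hence --- by the hypothesis --- in the same part of the $G$-fibre partition. Computing that partition using the specific second coordinate $\delta$ (legitimate, since the fibre partition is independent of the chosen element of $\Delta$, using transitivity of $H$), this says $(\gamma,\delta)$ and $(\gamma_1,\delta)$ lie in the same part of $\Pi$; as $(\gamma_1,\delta)\in P$ we conclude $(\gamma,\delta)\in P$, as required. I expect this step to be the main obstacle, and it is precisely here --- and only here --- that the equality hypothesis is used.

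Finally I would conclude that $\Pi=\Sigma\times T$. By the previous step, each part of $\Pi$ has the form $A\times B$ with $A=P_0$ a part of $\Sigma$ and $B=P_0'$ a part of $T$. Conversely, given any part $A$ of $\Sigma$ and any part $B$ of $T$, choose $(\gamma,\delta)\in A\times B$ and let $Q$ be the part of $\Pi$ containing it; then $Q=Q_0\times Q_0'$, and since $Q_0$ and $A$ are parts of $\Sigma$ sharing the point $\gamma$ they coincide, and likewise $Q_0'=B$, so $A\times B=Q$ is a part of $\Pi$. Hence the parts of $\Pi$ are exactly the products of a part of $\Sigma$ with a part of $T$, that is, $\Pi$ is obtained by crossing the $G$-invariant partition $\Sigma$ of $\Gamma$ with the $H$-invariant partition $T$ of $\Delta$.
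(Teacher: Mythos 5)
Your proof is correct, but it takes a different route from the paper's. The paper argues by double counting: it views a part $P$ of $\Pi$ as the edge set of a bipartite graph on $A\cup B$, where $A$ and $B$ are the parts of the two projection partitions containing $P$'s projections, observes that the valencies on each side are the sizes of the corresponding fibre parts, deduces $|A|a=|B|b$, and concludes that equality of the fibre and projection partitions on $\Gamma$ forces $P=A\times B$; this counting also yields, as a by-product, the symmetric fact that the fibre and projection partitions agree on $\Gamma$ if and only if they agree on $\Delta$. You instead run a direct element-chase: given $(\gamma,\delta)$ in the product of the two projections of $P$, you convert ``same part of the projection partition'' into ``same part of the fibre partition'' via the hypothesis, and then exploit the independence of the fibre partition from the chosen element of $\Delta$ (which is where transitivity of $H$ enters) evaluated at the specific $\delta$ to conclude $(\gamma,\delta)\in P$; the identification $\Pi=\Sigma\times T$ then follows cleanly. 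Your argument uses no counting at all, so it is more elementary and would survive without finiteness of $\Omega$, while the paper's argument buys the additional equivalence between agreement on $\Gamma$ and agreement on $\Delta$ (which, in your setup, one would instead recover afterwards by applying the easy direction on the other side). Both proofs rely on the same preliminary observations from the paper: that the projection sets form a partition and that the fibre partition is well defined.
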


\begin{proof}
First we observe that the projection and fibre partitions on $\Gamma$ agree
if and only if those on $\Delta$ agree. For the pairs in a part $P$ of $\Pi$
are the edges of a bipartite graph on $A\cup B$, where $A$ and $B$ are parts of
the projection partitions on $\Gamma$ and $\Delta$ respectively; the valency of a point in $A$ is equal to the number
of points of $B$ in a part of the fibre partition on $\Delta$, which we will
denote by $a$; and similarly the valencies $b$ of the points in $B$. Then
counting edges of the graph (that is, pairs in part $P$ of $\Pi$), we see
that $|A|a=|B|b$. Now the fibre and projection partitions on $\Gamma$ agree if
and only if $|A|=b$, which is equivalent to $|B|=a$.

Moreover, if this equality holds, then every pair in $A\times B$ lies in the
same part of $\Pi$, so $A$ and $B$ are parts of both the projection and
fibre partitions on the relevant sets. In this case, $\Pi$ is obtained by
crossing these partitions.

Conversely, it is easy to see that if $\Pi$ is obtained by crossing, then the
fibre and projection partitions coincide.\qed
\end{proof}

Next we introduce the notion of partition orthogonality.

\paragraph{Definition} Let $G, H$ be
transitive permutation groups, on $\Gamma$, $\Delta$ respectively, as above. We say that $G$ and $H$ are \emph{partition-orthogonal} if the only $G\times H$-invariant partitions of $\Gamma\times\Delta$ are of the form $\{\Gamma_i\times \Delta_j \mid i\in\{1, \ldots, m\}, j\in \{1, \ldots, n\}\}$ where $\{\Gamma_1, \ldots, \Gamma_m\}$ is a $G$-invariant partition of $\Gamma$ and $\{\Delta_1, \ldots, \Delta_n\}$ is an $H$-invariant partition of~$\Delta$.

\begin{lemma}\label{lemma-orthogonal}
Let $G_i \leq \Sym(\Omega_i)$ for $i\in \{1, \ldots, m\}$ be transitive, and let $G = G_1\times \cdots \times G_m$ act on $\Omega = \Omega_1 \times \cdots\times \Omega_m$ component-wise. If $G_i$ and $G_j$ are partition-orthogonal for all $i, j\in \{1, \ldots, m\}$ with $i\ne j$, then the $G$-invariant partitions are precisely the products of $G_i$-invariant partitions for $i\in \{1, \ldots, m\}$.
\end{lemma}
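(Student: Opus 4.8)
The plan is to induct on $m$. For $m=1$ there is nothing to prove, and for $m=2$ the assertion is exactly the definition of partition-orthogonality; so let $m\ge 3$ and assume the result for every smaller number of transitive, pairwise partition-orthogonal factors. Write $\Sigma=\Omega_1\times\cdots\times\Omega_{m-1}$ and $H=G_1\times\cdots\times G_{m-1}$, so that $G=H\times G_m$ acts on $\Omega=\Sigma\times\Omega_m$. The whole inductive step reduces to the claim $(\star)$: \emph{$H$ and $G_m$ are partition-orthogonal}. Indeed, granting $(\star)$, every $G$-invariant partition $\Pi$ of $\Omega$ is a crossing $\Pi'\times\Lambda_m$ with $\Pi'$ an $H$-invariant partition of $\Sigma$ and $\Lambda_m$ a $G_m$-invariant partition of $\Omega_m$; since $G_1,\dots,G_{m-1}$ are pairwise partition-orthogonal, the induction hypothesis writes $\Pi'=\Lambda_1\times\cdots\times\Lambda_{m-1}$, so $\Pi=\Lambda_1\times\cdots\times\Lambda_m$, and the converse inclusion (products of invariant partitions are invariant) is routine.

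To prove $(\star)$ I would attach to an arbitrary $G$-invariant partition $\Pi$ of $\Omega$, for each index $i$, its \emph{one-coordinate-change partition} $\Sigma_i$ of $\Omega_i$: declare $x,x'\in\Omega_i$ to be $\Sigma_i$-related when two points of $\Omega$ that agree off coordinate $i$ and carry $x,x'$ in coordinate $i$ lie in a common part of $\Pi$. Transitivity of $\prod_{j\neq i}G_j$ on $\prod_{j\neq i}\Omega_j$ shows this is independent of the frozen coordinates, and makes $\Sigma_i$ a genuine $G_i$-invariant partition. The key point is a ``slice'' observation: for distinct $i,j$, freezing all coordinates except $i$ and $j$ restricts $\Pi$ to a $(G_i\times G_j)$-invariant partition of $\Omega_i\times\Omega_j$ (independent of the frozen values), which by the pairwise partition-orthogonality hypothesis is a crossing; reading off its two factors, again via the frozen-coordinate argument, identifies it as $\Sigma_i\times\Sigma_j$. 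Equivalently: two points of $\Omega$ differing only in coordinates $i$ and $j$ lie in a common part of $\Pi$ precisely when their $i$th coordinates are $\Sigma_i$-related and their $j$th coordinates are $\Sigma_j$-related.

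Now fix an $(H\times G_m)$-invariant partition $\Pi$ of $\Sigma\times\Omega_m$. By the preceding Proposition, $(\star)$ follows once we check that the fibre partition and the projection partition of $\Pi$ on $\Sigma$ coincide. Both are $H$-invariant, so the induction hypothesis expresses each as a product of $G_i$-invariant partitions of the $\Omega_i$ ($i<m$), and it suffices to show that each has $i$th factor equal to $\Sigma_i$. For the fibre partition --- the restriction of $\Pi$ to $\Sigma\times\{z\}$ for a fixed $z\in\Omega_m$ --- this is immediate from the definition of $\Sigma_i$. For the projection partition, its $i$th factor relates $x$ and $x'$ exactly when some choice of $m$th coordinates completes the corresponding two points of $\Sigma$ to $\Pi$-related points of $\Omega$; but those completed points differ only in coordinates $i$ and $m$, so the slice observation for the pair $\{i,m\}$ forces their $i$th coordinates to be $\Sigma_i$-related. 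Hence both factors equal $\Sigma_i$, the two partitions of $\Sigma$ agree, and $(\star)$ holds.

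I expect $(\star)$ to be the main obstacle: the trick is to resist attacking $\Pi$ on $\Sigma\times\Omega_m$ head-on and instead to compare its fibre and projection partitions on $\Sigma$, feeding both through the induction hypothesis. This is also where \emph{pairwise} partition-orthogonality is indispensable --- the ``diagonal'' invariant partition of $\{0,1\}^2$ under $S_2\times S_2$ shows that $S_2$ is not partition-orthogonal to itself, so a weaker hypothesis (for instance, each factor orthogonal to the product of the others) would not let us pin the factors of the projection partition down to $\Sigma_i$. The remaining ingredients --- well-definedness of the $\Sigma_i$ and the coordinate bookkeeping behind the slice observation --- are routine consequences of the transitivity of the factor groups.
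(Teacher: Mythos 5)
Your proposal is correct and follows essentially the same route as the paper: induction on the number of factors, comparison of the fibre and projection partitions on $\Omega_1\times\cdots\times\Omega_{m-1}$ using the preceding proposition, application of the induction hypothesis to write both as products of $G_i$-invariant partitions, and reduction to the pairwise partition-orthogonality of $G_i$ with $G_m$. The differences are presentational rather than substantive: you argue directly instead of by contradiction, your ``slice observation'' spells out the two-coordinate step that the paper compresses into its final sentence, and your intermediate claim $(\star)$ (that $G_1\times\cdots\times G_{m-1}$ and $G_m$ are partition-orthogonal) is in effect the paper's subsequent lemma on orthogonality to a product, here established in passing within the induction.
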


\begin{proof}

We prove the claim by induction. If $m = 2$, then the claim follows by the definition. Suppose that the claim holds for $m - 1$ factors. Let $H = G_1\times \cdots \times G_{m - 1}$ and suppose for a contradiction that there is some $G$-invariant partition $\Pi$ which is not a direct product of partitions of the sets $\Omega_i$. Then the $H$-fibre and $H$-projection partitions induced on $\Omega_1\times \cdots \times \Omega_{m - 1}$ by $\Pi$ must differ. 

By the induction hypothesis, all the $H$-invariant partitions are direct products of partitions, and therefore there must exist some $i\in \{1, \ldots, m - 1\}$ such that  the $G_i$-fibre and the $G_i$-projection partition induced on $\Omega_i$ by $\Pi$ differ. However, this means that the partition induced on $G_i\times G_m$ is not a direct product of partitions of $\Omega_i\times \Omega_m$, which is a contradiction since we have assumed that $G_i$ and $G_m$ are partition-orthogonal.

Therefore, every $G$-invariant partition of $\Omega$ must be a direct product of partitions of the sets $\Omega_i$. \qed
\end{proof}

\begin{lemma}\label{lemma-orthogonal2}
Let $G_1 \leq \Sym(\Omega_1), \ldots, G_m\leq \Sym(\Omega_m), H\leq \Sym(\Delta)$ be transitive groups. If $H$ is partition-orthogonal to $G_i$ for all $i\in \{1, \ldots, m\}$, then $H$ is partition-orthogonal to $G_1\times \cdots \times G_m$.
\end{lemma}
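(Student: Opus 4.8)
The plan is to reduce to the case $m=2$ and then induct on $m$. Granting the two-factor statement, suppose the lemma holds for fewer than $m$ factors and let $H$ be partition-orthogonal to $G_1,\dots,G_m$. By the two-factor case, $H$ is partition-orthogonal to $G_{m-1}\times G_m$; since partition-orthogonality is symmetric, the $m-1$ groups $G_1,\dots,G_{m-2}$ and $G_{m-1}\times G_m$ are each partition-orthogonal to $H$, so the inductive hypothesis gives that $H$ is partition-orthogonal to $(G_1\times\cdots\times G_{m-2})\times(G_{m-1}\times G_m)$, which after reassociating the product set is $G_1\times\cdots\times G_m$. Thus all the content lies in the case $m=2$.

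For $m=2$ I would pass to the language of overgroups of a point stabiliser. Set $\Gamma=H\times G_1\times G_2$ acting coordinatewise on $\Delta\times\Omega_1\times\Omega_2$, fix a base point $\alpha=(\delta_0,\omega_1,\omega_2)$ with stabiliser $\Gamma_\alpha$, and recall from Section~\ref{s:pg} that $\Gamma$-invariant partitions correspond to the subgroups $L$ with $\Gamma_\alpha\le L\le\Gamma$, that the crossings of an $H$-invariant partition of $\Delta$ with a $(G_1\times G_2)$-invariant partition of $\Omega_1\times\Omega_2$ are exactly those for which $L$ splits as a direct product $L_1\times L_{23}$ with $L_1\le H$ and $L_{23}\le G_1\times G_2$, and that partition-orthogonality of $H$ to $G_i$ says precisely that every subgroup of $H\times G_i$ lying above the point stabiliser $(H\times G_i)_{(\delta_0,\omega_i)}$ is such a direct product. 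So the task becomes: show every $L$ with $\Gamma_\alpha\le L\le\Gamma$ has the form $L_1\times L_{23}$.

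The key is that $L$ has coordinate projections and coordinate sections to which partition-orthogonality applies. Put $L_1=\pi_H(L)$. Both $\pi_{H\times G_1}(L)$ and $L\cap(H\times G_1\times\{1\})$ contain the relevant point stabiliser of $H\times G_1$, so by orthogonality of $H$ to $G_1$ they are direct products; writing $\pi_{H\times G_1}(L)=L_1\times D_1$ and $L\cap(H\times G_1\times\{1\})=A_1\times B_1\times\{1\}$ yields $A_1\times\{1\}\times\{1\}\subseteq L$, and symmetrically $L\cap(H\times\{1\}\times G_2)=A_2\times\{1\}\times B_2$ with $A_2\times\{1\}\times\{1\}\subseteq L$. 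Now for $h\in L_1$ the pair $(h,1)\in\pi_{H\times G_1}(L)$ lifts to some $(h,1,g_2)\in L$; this element has trivial middle coordinate, so it lies in $L\cap(H\times\{1\}\times G_2)=A_2\times\{1\}\times B_2$, forcing $h\in A_2$. Hence $L_1=A_2$ and $L_1\times\{1\}\times\{1\}\subseteq L$. Since $H\times\{1\}\times\{1\}$ and $\{1\}\times G_1\times G_2$ are normal in $\Gamma$, the identity $L=\bigl(L\cap(H\times\{1\}\times\{1\})\bigr)\cdot\bigl(L\cap(\{1\}\times G_1\times G_2)\bigr)$ now follows, exhibiting $L$ as the required direct product.

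The step I expect to be the main obstacle is exactly the one that produces $L_1\times\{1\}\times\{1\}\subseteq L$: it requires combining the section of $L$ in the $H,G_2$ coordinates with the projection of $L$ to the $H,G_1$ coordinates and checking that $\pi_H(L)$ acquires nothing beyond $A_2$. One also has to be scrupulous that each auxiliary subgroup genuinely contains the point stabiliser to which partition-orthogonality is being applied, and that the dictionary between crossings of $\Delta\times(\Omega_1\times\Omega_2)$ and product subgroups of $H\times(G_1\times G_2)$ is correctly set up, bearing in mind that a $(G_1\times G_2)$-invariant partition of $\Omega_1\times\Omega_2$ need not be a product of partitions of $\Omega_1$ and $\Omega_2$.
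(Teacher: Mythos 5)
Your proof is correct, and its outer structure (reduce to two factors, then induct on $m$) is the same as the paper's; the genuine difference is in how the two-factor case is handled. The paper argues by contradiction in the combinatorial language of the proposition preceding the lemma: if a $(G_1\times G_2\times H)$-invariant partition is not a crossing, then two triples in a common part must differ in the $\Delta$-coordinate as well as in the $(\Omega_1,\Omega_2)$-coordinates, which forces the projection and fibre partitions on $\Omega_1\times\Delta$ or on $\Omega_2\times\Delta$ to differ, contradicting partition-orthogonality of $H$ with the relevant $G_i$. You instead push everything through the correspondence between invariant partitions and overgroups of a point stabiliser, and prove the splitting $L=\pi_H(L)\times\left(L\cap(\{1\}\times G_1\times G_2)\right)$ by playing the projection of $L$ to $H\times G_1$ against its section over $H\times\{1\}\times G_2$. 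Both arguments are valid. Your route costs the bookkeeping of the dictionary (crossings correspond exactly to product subgroups, and each auxiliary subgroup must be checked to contain the relevant point stabiliser --- which you do check), but it buys an explicit structural statement about subgroups of $H\times G_1\times G_2$ and sidesteps the most compressed step of the paper's argument, namely that a failure of the crossing property over $\Omega_1\times\Omega_2$ must already be visible in one of the two coordinate projections onto $\Omega_i\times\Delta$. Two minor remarks: the final identity does not actually need normality of $H\times\{1\}\times\{1\}$ and $\{1\}\times G_1\times G_2$, only the containment $\pi_H(L)\times\{1\}\times\{1\}\subseteq L$ you have just established; and the subgroup $A_1$ extracted from $L\cap(H\times G_1\times\{1\})$ is never used, since only the projection to $H\times G_1$ and the section over $H\times\{1\}\times G_2$ enter the final step.
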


\begin{proof}
Let $G = G_1\times \cdots \times G_m \times H$ and $\Omega = \Omega_1\times \cdots \times \Omega_m \times \Delta$. We prove the claim by induction on $m$.

We first prove the claim for $m = 2$. Let $(\alpha, \beta, \delta), (\alpha', \beta', \delta')\in \Omega_1\times \Omega_2\times \Delta$. First consider the $H$-fibre partition $\Pi_{\Delta}$ of $\Delta$ induced by $\Pi$. Note that by definition $(\alpha, \beta, \delta)$ and $(\alpha, \beta, \delta')$ are in the same part of $\Pi$ if and only if $\delta$ and $\delta'$ are in the same part of $\Pi_{\Delta}$.

Now consider the $(G_1\times H)$-fibre partition $\Pi_{\Omega_1\times \Delta}$ of $\Omega_1\times \Delta$ induced by $\Pi$. Since $G_1$ and $H$ are partition-orthogonal by assumption, we must have $\Pi_{\Omega_1\times \Delta} = \Pi_{\Omega_1}\times \Pi_{\Delta}$, where $\Pi_{\Omega_1}$ denotes the $G_1$-fibre partition of $\Omega_1$ induced by $\Pi$. This means that $(\alpha, \beta, \delta)$ and $(\alpha', \beta, \delta')$ are in the same part of $\Pi$ if and only if $\alpha$ and $\alpha'$ are in the same part of $\Pi_{\Omega_1}$ and $\delta$ and $\delta'$ are in the same part of $\Pi_{\Delta}$.

An entirely similar argument shows that $(\alpha, \beta, \delta)$ and $(\alpha, \beta', \delta')$ are in the same part of $\Pi$ if and only if $\beta$ and $\beta'$ are in the same part of the $G_2$-fibre partition $\Pi_{\Omega_2}$ of $\Omega_2$ induced by $\Pi$ and $\delta$ and $\delta'$ are in the same part of $\Pi_{\Delta}$.

It therefore follows that $(\alpha, \beta,\delta)$ and $(\alpha', \beta', \delta')$ are in the same part of $\Pi$ if and only if $\alpha$ and $\alpha'$ are in the same part of $\Pi_{\Omega_1}$, $\beta$ and $\beta'$ are in the same part of $\Pi_{\Omega_2}$ and $\delta$ and $\delta'$ are in the same part of $\Pi_{\Delta}$, and so
\[
\Pi = \Pi_{\Omega_1}\times \Pi_{\Omega_2}\times \Pi_{\Delta}
\]
which proves the claim.

Now suppose that the claim holds for all integers less than $m$. Then, it follows that $H$ is partition-orthogonal to $G_1\times \cdots \times G_{m - 1}$. Now, since $H$ is partition-orthogonal to both $G_1\times \cdots \times G_{m -1}$ and $G_m$, using the inductive hypothesis once more gives us that $H$ is indeed partition-orthogonal to $G_1\times \cdots \times G_m$. \qed
\end{proof}

\begin{lemma}\label{lemma-orthogonalPP}
Let $G\leq \Sym(\Gamma)$ and $H\leq \Sym(\Delta)$ be partition-orthogonal pre-primitive groups. Then $G\times H$ in its product action is pre-primitive.
\end{lemma}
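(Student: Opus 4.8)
The plan is to combine the partition-orthogonality hypothesis with pre-primitivity of the two factors; almost all of the real work has already been done in setting up partition-orthogonality, so the proof is short. First I would note that $G\times H$ is transitive on $\Gamma\times\Delta$ in the product action (given $(\gamma_1,\delta_1)$ and $(\gamma_2,\delta_2)$, choose $g\in G$ with $\gamma_1 g=\gamma_2$ and $h\in H$ with $\delta_1 h=\delta_2$; then $(g,h)$ maps the first pair to the second), so it is meaningful to ask whether it is pre-primitive.

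Now let $\Pi$ be an arbitrary $(G\times H)$-invariant partition of $\Gamma\times\Delta$. Since $G$ and $H$ are partition-orthogonal, $\Pi$ is the crossing of a $G$-invariant partition $\Pi_1$ of $\Gamma$ with an $H$-invariant partition $\Pi_2$ of $\Delta$; that is, the parts of $\Pi$ are exactly the sets $P\times Q$ with $P$ a part of $\Pi_1$ and $Q$ a part of $\Pi_2$. Because $G$ is pre-primitive, $\Pi_1$ is the orbit partition of some subgroup of $G$, which (as recalled in this subsection, following \cite{a-mcs}) we may take to be a normal subgroup $N\trianglelefteq G$; similarly $\Pi_2$ is the orbit partition of a normal subgroup $M\trianglelefteq H$. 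Then $N\times M$ is a normal subgroup of $G\times H$, and its orbit on $\Gamma\times\Delta$ through $(\gamma,\delta)$ is $(\gamma N)\times(\delta M)$, a part of $\Pi_1$ crossed with a part of $\Pi_2$. Hence the orbit partition of $N\times M$ is precisely $\Pi$, so every $(G\times H)$-invariant partition is the orbit partition of a (normal) subgroup, which is exactly the assertion that $G\times H$ is pre-primitive.

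There is no substantive obstacle here: partition-orthogonality is exactly what is needed to force every invariant partition of the product to split as a crossing, and pre-primitivity of each factor then supplies the required normal subgroups, whose direct product does the job. The only minor point to state carefully is that the orbit partition of $N\times M$ on the product set is the crossing of the two orbit partitions, which is immediate from the componentwise action.
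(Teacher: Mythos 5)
Your proof is correct and follows essentially the same route as the paper: use partition-orthogonality to write the invariant partition as a crossing $\Pi_1\times\Pi_2$, use pre-primitivity of the factors to realise $\Pi_1$ and $\Pi_2$ as orbit partitions of subgroups, and observe that the direct product of those subgroups has orbit partition $\Pi$. The extra remarks on transitivity and on taking the subgroups normal are fine but not needed beyond what the paper does.
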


\begin{proof}
Let $\Pi$ be a $G\times H$-invariant partition of $\Gamma\times \Delta$. Since $G$ and $H$ are partition-orthogonal, $\Pi$ is the direct product of a $G$-invariant partition $\Pi_G$ and an $H$-invariant partition $\Pi_H$. Since both $G$ and $H$ are pre-primitive, it follows that $\Pi_G$ and $\Pi_H$ are orbit partitions of some subgroups $G^*$ and $H^*$ of $G$ and $H$ respectively. It is then easy to check that $\Pi$ is the orbit partition of $G^*\times H^*$, which proves the claim. \qed
\end{proof}

\begin{theorem}\label{thm-directPP}
Let $G_i \leq \Sym(\Omega_i)$ for $i\in \{1, \ldots, m\}$, and let $G_i$ act primitively on $\Omega_i$ for all $i\in \{1, \ldots, m\}$. Then $G = G_1\times \cdots \times G_m$ in its product action is pre-primitive.
\end{theorem}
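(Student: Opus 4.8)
The plan is to argue by induction on $m$, reducing to repeated application of Lemmas~\ref{lemma-orthogonal2} and~\ref{lemma-orthogonalPP} together with the case $m=2$, which is the result of~\cite{a-mcs}. The difficulty is that the product of two primitive groups need not be primitive, nor even partition-orthogonal to a third primitive group: the obstruction is $C_p\times C_p$ acting regularly on $p^2$ points, which has $p+1$ non-trivial invariant partitions (the ``diagonal'' ones), none of them a product of partitions of the factors. So one cannot feed ``$G_1\times\cdots\times G_{m-1}$ is pre-primitive'' straight into Lemma~\ref{lemma-orthogonalPP} without first isolating the cyclic factors of prime order.

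The key input is a dichotomy for two factors: \emph{if $G\le\Sym(\Gamma)$ and $H\le\Sym(\Delta)$ are primitive, then $G$ and $H$ are partition-orthogonal unless $G$ and $H$ are both cyclic of the same prime order}. To prove this, suppose $\Pi$ is a $(G\times H)$-invariant partition of $\Gamma\times\Delta$ that is not obtained by crossing invariant partitions of the factors. By the Proposition above characterising equality of the projection and fibre partitions, the $G$-fibre and $G$-projection partitions of $\Gamma$ then differ, and likewise those of $\Delta$; since these partitions are $G$- (respectively $H$-) invariant and $G,H$ are primitive, while the fibre partition refines the projection partition, the projection partitions must be $U$ and the fibre partitions $E$. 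The edge-count in that Proposition now forces $|\Gamma|=|\Delta|=:n$, makes each part of $\Pi$ a perfect matching, hence the graph of a bijection $\Gamma\to\Delta$, and (as the parts partition $\Gamma\times\Delta$) exhibits these bijections as a sharply transitive set $\Sigma$; invariance of $\Pi$ says exactly that $H\Sigma G=\Sigma$. Picking $\sigma_0\in\Sigma$ to identify $\Delta$ with $\Gamma$, one gets $\Sigma_0:=\sigma_0^{-1}\Sigma\subseteq\Sym(\Gamma)$ with $\id\in\Sigma_0$, $|\Sigma_0|=n$, and $\Sigma_0=H'\Sigma_0 G$ for a conjugate $H'$ of $H$. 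Hence $G=\id\cdot G\subseteq\Sigma_0$, so $|G|\le n$; as $G$ is transitive of degree $n$ this forces $|G|=n$ and $\Sigma_0=G$, so $G$ is regular, and being primitive it is cyclic of prime order $p=n$; symmetrically $H\cong H'\subseteq\Sigma_0=G$ is cyclic of the same order. This proves the dichotomy. In the exceptional case $C_p\times C_p$ is abelian and regular, so every invariant partition is the coset partition of a (necessarily normal) subgroup, whence the group is pre-primitive; more generally, for any prime $p$ and any $k\ge1$ the regular elementary abelian group $C_p^k$, in its product action, is pre-primitive for the same reason.

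Now I would prove the theorem. A single primitive $G_i$ is pre-primitive, since its only invariant partitions $E,U$ are the orbit partitions of $\{1\}$ and $G_i$. Group the factors into blocks: for each prime $p$ amalgamate all the $G_i$ that are cyclic of order $p$ into a single block isomorphic to $C_p^{k_p}$ (pre-primitive by the previous paragraph), and let every remaining $G_i$ --- primitive but not cyclic of prime order --- be its own block. By the dichotomy, any two factors lying in distinct blocks are partition-orthogonal (two factors in the same block would both be cyclic of the same prime order), so two applications of Lemma~\ref{lemma-orthogonal2} (and symmetry of partition-orthogonality) show that distinct blocks are partition-orthogonal. Finally induct on the number of blocks $B_1,\dots,B_r$: granting that $B_1\times\cdots\times B_{r-1}$ is pre-primitive, Lemma~\ref{lemma-orthogonal2} gives that $B_r$ is partition-orthogonal to $B_1\times\cdots\times B_{r-1}$, and Lemma~\ref{lemma-orthogonalPP} then yields that $G=B_1\times\cdots\times B_r$ is pre-primitive.

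The main obstacle is the two-factor dichotomy, and within it the sharply-transitive-set computation identifying the $C_p\times C_p$ ``Latin square'' configuration as the only way two primitive groups can fail to be partition-orthogonal; once that is in place, the rest is bookkeeping with Lemmas~\ref{lemma-orthogonal2} and~\ref{lemma-orthogonalPP} and the observation that elementary abelian regular groups are pre-primitive.
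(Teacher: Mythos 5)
Your proof is correct and follows essentially the same route as the paper: amalgamate the cyclic factors of each prime order into elementary abelian blocks, show that distinct blocks are partition-orthogonal by two applications of Lemma~\ref{lemma-orthogonal2}, and conclude by induction using Lemma~\ref{lemma-orthogonalPP}. The only real difference is that you prove the two-factor dichotomy (two primitive groups fail to be partition-orthogonal only when both are cyclic of the same prime order) from scratch via the sharply transitive set argument, whereas the paper simply cites this from~\cite{a-mcs}; your argument for it is sound, and you are in fact slightly more careful than the paper in checking explicitly that the elementary abelian blocks are themselves pre-primitive, which the application of Lemma~\ref{lemma-orthogonalPP} requires.
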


\begin{proof}

Abelian primitive groups are cyclic of prime order. So, by rearranging the components if necessary, we can write $G$ as a direct product of elementary abelian groups of different prime power order and non-abelian primitive groups. 

It has been shown in~\cite{a-mcs} that two primitive groups are partition-orthogonal if and only if they are not cyclic of the same prime order. Therefore, if $P$ and $Q$ are two elementary abelian groups of orders $p^a$ and $q^b$ respectively, with $p\ne q$, then it follows by Lemma~\ref{lemma-orthogonal2} that every component of $Q$ is partition-orthogonal to $P$, and then applying Lemma~\ref{lemma-orthogonal2} again, we get that $P$ must be partition-orthogonal to $Q$. Similarly, we get that any elementary abelian group and any non-abelian primitive group are partition-orthogonal. Then Lemma~\ref{lemma-orthogonal} gives us that $G$ can be written as  a direct product of mutually partition-orthogonal factors, and it is hence pre-primitive by Lemma~\ref{lemma-orthogonalPP}. \qed
\end{proof}

\subsubsection{Regular groups}\label{ss:regular}

It follows from Corollary~\ref{c:permut} that, if $G$ is a regular permutation
group, then $G$ has the OB property if and only if any two subgroups of $G$ commute. These groups were determined by Iwasawa~\cite{iwasawa}; we refer
to Schmidt~\cite[Chapter 2]{schmidt} for all the material we require. In this
section we use the term \emph{quasi-hamiltonian}, taken from \cite{cd}, for a
group in which any two subgroups commute. (The term will not be used
outside this section.)

We warn the reader that both Iwasawa and Schmidt consider hypotheses which are
more general in two ways:
\begin{itemize}
\item they consider groups whose subgroup lattices are modular, which is weaker
than requiring all subgroups to commute;
\item they consider infinite as well as finite groups.
\end{itemize}
We have not found a reference for precisely what we want, so we give a direct
proof of the first part; the second is \cite[Theorem 2.3.1]{schmidt}.

\begin{theorem}
\begin{enumerate}
\item A finite group $G$ is quasi-hamiltonian if and only if it is the direct
product of quasi-hamiltonian subgroups of prime power order.
\item Suppose that $p$ is prime, and $G$ is a non-abelian quasi-hamiltonian
$p$-group. Then either
\begin{itemize}
\item $G=Q_8\times V$, where $Q_8$ is the quaternion group of order $8$ and
$V$ an elementary abelian $2$-group; or
\item $G$ has an abelian normal subgroup $A$ with cyclic factor group and there is
$b\in G$ with $G=A\langle b\rangle$ and $s$ such that $b^{-1}ab=a^{1+p^s}$ for
all $a\in A$, with $s\ge2$ if $p=2$.
\end{itemize}
\end{enumerate}
\label{t:qh}
\end{theorem}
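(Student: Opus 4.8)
The plan is to prove part (a) directly and cite Schmidt for part (b). For part (a), the nontrivial direction is showing that a finite quasi-hamiltonian group is a direct product of its Sylow subgroups (each of which is automatically quasi-hamiltonian, since any subgroup of a quasi-hamiltonian group is quasi-hamiltonian). The easy direction is that a direct product of quasi-hamiltonian groups of coprime orders is quasi-hamiltonian: if $H \le G_1 \times G_2$ with $\gcd(|G_1|,|G_2|)=1$, then $H = (H \cap G_1) \times (H \cap G_2)$ by the coprimality, and products of such ``rectangular'' subgroups commute componentwise; one extends this to any number of factors by induction. So the real content is the forward implication.

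**First I would** show that in a finite quasi-hamiltonian group $G$, every Sylow subgroup is normal. Fix a prime $p$ dividing $|G|$ and let $P$ be a Sylow $p$-subgroup. Take any $g \in G$; then $P$ and $\langle g \rangle$ commute, so $P\langle g\rangle$ is a subgroup of $G$. Decompose $\langle g \rangle = \langle g_p \rangle \times \langle g_{p'} \rangle$ into its $p$-part and $p'$-part. The subgroup $P\langle g \rangle$ has order dividing $|P|\cdot|\langle g\rangle|$, and its Sylow $p$-subgroup contains $P$ and $g_p$; by order considerations (the $p'$-part $\langle g_{p'}\rangle$ contributes the full $p'$-part of $|P\langle g\rangle|$) one sees $|P\langle g\rangle|_p = |P|$, so $P$ is already Sylow in $P\langle g\rangle$ and in particular $g_p \in P$, whence $g_p$ normalises... more cleanly: since $P$ has $p$-power index... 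Actually the cleanest route is: $P \langle g \rangle$ is a group whose order is $|P| \cdot |\langle g\rangle : \langle g\rangle \cap P|$, and since $P$ is a Sylow $p$-subgroup of this group too, the number of Sylow $p$-subgroups of $P\langle g\rangle$ is $\equiv 1 \pmod p$ and divides $|\langle g\rangle|_{p'}$; but $\langle g\rangle$ normalises $P\langle g\rangle \cap P' $-type arguments get delicate, so I would instead invoke the standard fact (provable from commuting subgroups) that $P^g \subseteq P\langle g\rangle$, and since $P^g$ is also Sylow in the $p$-group-times-cyclic group $P\langle g\rangle$ whose Sylow $p$-subgroups are all conjugate within it by an element of $\langle g\rangle$ of $p'$-order — and such an element commutes with $P$ — we get $P^g = P$. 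Hence $P \trianglelefteq G$.

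**Then** with all Sylow subgroups normal, the standard argument gives $G = P_1 \times \cdots \times P_k$ (the Sylow subgroups for the distinct primes $p_1,\dots,p_k$): normal subgroups of coprime order intersect trivially and commute elementwise, and the product has the full order $|G|$. Each $P_i$, being a subgroup of $G$, is quasi-hamiltonian. This completes part (a). For part (b), I would simply state that this is the classification of non-abelian quasi-hamiltonian (equivalently, modular) $p$-groups due to Iwasawa, and refer the reader to \cite[Theorem 2.3.1]{schmidt} for the proof, as already announced in the paragraph preceding the theorem.

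**The main obstacle** is the normality-of-Sylow step: one must be careful to extract, from the mere hypothesis that cyclic subgroups commute with $P$, the conclusion that $P$ is normal, without accidentally assuming nilpotency. The key leverage is that $P\langle g\rangle$ is a genuine subgroup (that is the whole force of quasi-hamiltonicity here), that $P$ is a normal Sylow subgroup of the smaller group $P\langle g_p\rangle$ (a $p$-group, so trivially), and that the $p'$-element $g_{p'}$ — which commutes with $P$ by hypothesis applied to $\langle g_{p'}\rangle$ and $P$ — therefore normalises $P$; combining $g = g_p g_{p'}$ with $g_p$-conjugation staying inside the $p$-group $P\langle g_p \rangle = P$ (as $g_p \in$ some Sylow $p$-subgroup of $P\langle g\rangle$ equal to $P$) gives $P^g = P$. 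I would write this out carefully, as it is the one place a slick-looking argument can hide a gap.
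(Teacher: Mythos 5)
Your overall architecture for part (a) --- show that every Sylow subgroup of a finite quasi-hamiltonian group is normal, deduce that $G$ is the direct product of its (quasi-hamiltonian) Sylow subgroups, handle the converse by decomposing an arbitrary subgroup into its Sylow components and commuting factor by factor, and cite \cite[Theorem 2.3.1]{schmidt} for part (b) --- is exactly the paper's. The problem is the mechanism you propose for Sylow normality. The inference ``the $p'$-element $g_{p'}$ commutes with $P$ by hypothesis, therefore normalises $P$'' is not valid: two subgroups of coprime order can commute in the sense $HK=KH$ without either normalising the other. For instance, in $S_4$ a Sylow $3$-subgroup $H$ and a Sylow $2$-subgroup $K$ satisfy $HK=S_4=KH$ and have coprime orders, yet $K$ does not normalise $H$ (there are four Sylow $3$-subgroups). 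Of course $S_4$ is not quasi-hamiltonian, but the point is that the single hypothesis you invoke at that step --- that $\langle g_{p'}\rangle$ and $P$ commute --- cannot yield normality; you must apply quasi-hamiltonicity again, to a different pair of subgroups. The same unproved fact is silently used earlier in your sketch, when you assert that the Sylow $p$-subgroup of $P\langle g\rangle$ containing $g_p$ ``equals $P$''.

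The repair is short, and it makes the whole detour through $P\langle g\rangle$, $g_p$ and $g_{p'}$ unnecessary. If $P_1$ and $P_2$ are any two Sylow $p$-subgroups of $G$ (for example $P$ and $P^g$), then $P_1P_2$ is a subgroup by hypothesis, of order $|P_1|\cdot|P_2|/|P_1\cap P_2|$, which is a power of $p$; since $P_1$ is a Sylow $p$-subgroup this forces $|P_1P_2|=|P_1|$, hence $P_1=P_2$. So the Sylow $p$-subgroup is unique, hence normal. This is precisely the paper's one-line argument, and it is also what is needed to close the gap in your version, since showing that the $p'$-conjugator fixes $P$ again comes down to comparing the two Sylow $p$-subgroups $P$ and $P^{g_{p'}}$. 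With that substitution the remainder of your proof is fine.
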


Here is the proof of part (a). Suppose that $P_1$ and $P_2$ are Sylow
$p$-subgroups of the quasi-hamiltonian group $G$. Then $P_1P_2$ is a subgroup,
and $|P_1P_2|=|P_1|\cdot|P_2|/|P_1\cap P_2|$. Since $P_1$ and $P_2$ are Sylow
subgroups, this implies that $P_1=P_2$. So all Sylow subgroups of $G$ are
normal, and $G$ is nilpotent. Thus it is the direct product of its Sylow
subgroups. Since quasi-hamiltonicity is clearly inherited by subgroups, the
result follows

Conversely, if $G$ is nilpotent with quasi-hamiltonian Sylow subgroups, then
any subgroup is nilpotent and hence a direct product of its Sylow subgroups.
Factors whose orders are powers of different
primes commute; factors whose orders are powers of the same prime commute
by hypothesis. So any two subgroups commute.

\medskip

Note that not every quasi-hamiltonian group is a Dedekind group, namely a group all of whose subgroups are normal; so the OB
property lies strictly between transitivity and pre-primitivity. Note also
that $Q_8$ (acting regularly) is quasi-hamiltonian but $Q_8\times Q_8$ in the product action is not; so the OB property
is not closed under direct product.

\medskip

For groups with a regular normal subgroup, we have the following result.

\begin{theorem}
If $G\leq \Sym(\Omega)$ is a transitive group containing a regular normal subgroup $N$, then $G$ is OB if and only if the subgroups of $N$ normalised by
$G_{\alpha}$ commute.
\end{theorem}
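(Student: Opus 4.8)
The plan is to apply Corollary~\ref{c:permut}, which says that $G$ has the OB property if and only if every pair of subgroups between $G_\alpha$ and $G$ commute, and to translate this into a statement purely about subgroups of $N$. First I would record the semidirect decomposition: since $N$ is regular it is transitive, so $G=NG_\alpha$, and since $N$ acts freely we have $N\cap G_\alpha=1$, hence $G=N\rtimes G_\alpha$ and $G_\alpha$ acts on $N$ by conjugation. The structural heart of the argument is a bijection between the overgroups $H$ with $G_\alpha\le H\le G$ and the subgroups $M\le N$ normalised by $G_\alpha$, given by $H\mapsto H\cap N$ and $M\mapsto MG_\alpha$. For the first map, given $h\in H$ write $h=ng$ with $n\in N$, $g\in G_\alpha$; since $g\in H$ this forces $n\in H\cap N$, so $H=(H\cap N)G_\alpha$, and $H\cap N$ is normalised by $H\supseteq G_\alpha$. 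For the second, if $M\le N$ is $G_\alpha$-invariant then $MG_\alpha=G_\alpha M$ is a subgroup containing $G_\alpha$, and $(MG_\alpha)\cap N=M$ because $N\cap G_\alpha=1$. A routine check shows these two maps are mutually inverse.

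Next I would compute products. Writing $H=MG_\alpha$ and $K=M'G_\alpha$ for $G_\alpha$-invariant subgroups $M,M'\le N$, the fact that $G_\alpha$ normalises both gives $HK=MM'G_\alpha$ and $KH=M'MG_\alpha$. I then claim $HK=KH$ if and only if $MM'=M'M$. One direction is immediate. For the converse, suppose $MM'G_\alpha=M'MG_\alpha$; then any $x\in MM'\subseteq N$ can be written $x=yg$ with $y\in M'M\subseteq N$ and $g\in G_\alpha$, whence $g=y^{-1}x\in N\cap G_\alpha=1$, so $x=y\in M'M$; by symmetry $MM'=M'M$. Combining this equivalence with the bijection above and with Corollary~\ref{c:permut} yields the theorem: $G$ is OB if and only if every two subgroups of $N$ normalised by $G_\alpha$ commute.

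I do not expect a serious obstacle here; the argument is essentially a correspondence computation. The only points needing a little care are verifying that $H\mapsto H\cap N$ really lands on \emph{all} $G_\alpha$-invariant subgroups of $N$ (not merely normal subgroups of $G$), and justifying the cancellation of the $G_\alpha$ factor when passing between $MM'G_\alpha=M'MG_\alpha$ and $MM'=M'M$. Both of these rest solely on the single fact $N\cap G_\alpha=1$, so once that is in place the rest is straightforward.
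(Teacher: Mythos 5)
Your proposal is correct and follows essentially the same route as the paper: both identify $G=N\rtimes G_\alpha$, show every subgroup between $G_\alpha$ and $G$ has the form $MG_\alpha$ with $M\le N$ normalised by $G_\alpha$, compute $MG_\alpha\,M'G_\alpha=MM'G_\alpha$, cancel the $G_\alpha$ factor using $N\cap G_\alpha=1$, and invoke Corollary~\ref{c:permut}. Your explicit check that every $G_\alpha$-invariant subgroup of $N$ arises as $H\cap N$ (so the correspondence is a genuine bijection) is a slightly more careful rendering of a step the paper leaves implicit, but the argument is the same.
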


\begin{proof}
Suppose that $G$ is OB. Since $N$ is a regular normal subgroup of $G$ we can write $G = NG_{\alpha}$ for some $\alpha \in \Omega$, where $N\cap G_{\alpha} = 1$ and we can identify $\Omega$ with $N$ in such a way that $G_{\alpha}$ acts by conjugation and $N$ acts by right multiplication. 

We first show that the subgroups containing $G_{\alpha}$ are of the form $HG_{\alpha}$ for some $H\leq N$ invariant under the action of $G_{\alpha}$. Let $F$ be a subgroup containing $G_{\alpha}$. Since $F\leq G = NG_{\alpha}$ all the elements of $F$ are of the form $ng$ where $n\in N$ and $g\in G_{\alpha}$. Then since $G_{\alpha}\leq F$ it follows that $n = ngg^{-1}\in F$. Hence, $F = HG_{\alpha}$, where $H = N\cap F \leq N$. Since both $F$ and $N$ are invariant under the action of $G_{\alpha}$, so is $H$.

Let $KG_{\alpha}$, $LG_{\alpha}$ be two such subgroups. Since $G$ is OB, they
commute (Corollary~\ref{c:permut}), and we have
\begin{equation}\label{eq-ob}
KG_{\alpha}LG_{\alpha} = LG_{\alpha}KG_{\alpha}.
\end{equation}
But $G_{\alpha}L = LG_{\alpha}$ and $G_{\alpha}K = KG_{\alpha}$ since $KG_{\alpha}, LG_{\alpha} \leq G$.
Therefore, by Equation (1) we get
\[
KLG_{\alpha} = LKG_{\alpha}
\]
and intersecting both sides with $N$ gives us $KL = LK$. Since $K, L$ were arbitrary $G_{\alpha}$-invariant subgroups of $N$ the claim holds. 

Conversely, suppose that all the $G_{\alpha}$-invariant subgroups of $N$ commute and consider subgroups $KG_{\alpha}, LG_{\alpha}\leq G$, where $K$ and $L$ are
subgroups of $N$ normalised by $G_\alpha$. Then
\[
KG_{\alpha}LG_{\alpha} = KLG_{\alpha} = LKG_{\alpha} = LG_{\alpha}KG_{\alpha},
\]
and so $G$ is OB (again by Corollary~\ref{c:permut}).
\qed
\end{proof}

\subsubsection{Modularity and distributivity}

We have seen at the start of Section~\ref{ss:regular} that the subgroup lattice of a group, which clearly determines
modularity, does not determine whether the subgroups commute. So we cannot
expect a characterisation of the OB property in terms of the lattice of
subgroups containing a given point stabiliser. But is there anything to say here?

An example of a transitive group in which the lattice of invariant equivalence
relations is the pentagon ($P_5$ in Figure~\ref{f:p5n3}) is the following. Let $G$ be the $2$-dimensional
affine group over a finite field $F$ of order~$q$, and let $G$ act on the set
of \emph{flags} (incident point-line pairs) in the affine plane. The three
non-trivial $G$-invariant relations are ``same line'', ``parallel
lines'', and ``same point''. Clearly the equivalence relations ``same point''
and ``same line'' do not commute.

Since modularity does not imply the OB property, we could ask whether a
stronger property does. We saw in Corollary~\ref{c:chain} that the property
of being a chain does suffice. Is there a weaker property?

\begin{prop}
Let $G$ be a finite regular permutation group. Then the lattice of
$G$-invariant partitions is distributive if and only if $G$ is cyclic.
\end{prop}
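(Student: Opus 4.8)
The plan is to reduce the statement to the classical theorem of Ore that a finite group has distributive subgroup lattice if and only if it is cyclic. Since $G$ is regular, the point stabiliser $G_\alpha$ is trivial, so the order-preserving bijection recorded above between $G$-invariant partitions and overgroups of $G_\alpha$ becomes a bijection with \emph{all} subgroups of $G$; and that bijection carries meet to intersection and join to the subgroup generated. Hence the lattice of $G$-invariant partitions is isomorphic to the full subgroup lattice $L(G)$, and what must be shown is that $L(G)$ is distributive if and only if $G$ is cyclic. This is the finite case of Ore's theorem; a reference is \cite[Chapter~2]{schmidt}. Below I sketch a reasonably self-contained argument.

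For the direction ``$G$ cyclic $\Rightarrow$ distributive'': if $G\cong C_n$, then its subgroups are $C_d$ for $d\mid n$, one of each order, with $C_d\wedge C_e=C_{\gcd(d,e)}$ and $C_d\vee C_e=C_{\operatorname{lcm}(d,e)}$. Thus $L(G)$ is the divisor lattice of $n$, which is the direct product over the primes $p\mid n$ of a chain; a product of chains is distributive. (Equivalently, $d\mapsto\{(p,i):p^i\mid d\}$ embeds $L(G)$ into a Boolean lattice, so distributivity follows from the Fundamental Theorem on Distributive Lattices.)

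For the converse, suppose $L(G)$ is distributive, so by the characterisation of distributive lattices above it contains no sublattice isomorphic to $P_5$ or $N_3$. The first step is to show every Sylow subgroup of $G$ is cyclic. If $P$ is a Sylow $p$-subgroup, then $L(P)$ is a sublattice of $L(G)$ (meets and joins of subgroups of $P$ are computed the same way inside $P$ and inside $G$), hence distributive. But a non-cyclic $p$-group either contains a subgroup isomorphic to $C_p\times C_p$ --- whose $p+1\ge 3$ subgroups of order $p$, together with the trivial subgroup and $C_p\times C_p$, form an $N_3$ --- or else $p=2$ and $P$ is generalised quaternion, in which case the three maximal subgroups of $P$, together with their common pairwise intersection (a cyclic subgroup) and $P$ itself, form an $N_3$. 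Either way $L(P)$ is non-distributive, a contradiction; so all Sylow subgroups of $G$ are cyclic.

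It remains to show that $G$ is abelian, for then, having cyclic Sylow subgroups of coprime orders, $G$ is their direct product and hence cyclic; this last step is where the real work lies. If $G$ were non-abelian it could not be nilpotent (a nilpotent group with cyclic Sylow subgroups is cyclic), so some Sylow $q$-subgroup $Q$ is not normal. Replacing $G$ by $AQ$, where $A$ is the (normal, cyclic) Hall $q'$-subgroup, and then factoring out the normal kernel $Z$ of the conjugation action of $Q$ on $A$, one checks that distinct conjugates of $Q$ pairwise meet in $Z$, and that a subgroup $U$ minimal among those containing two conjugates of $Q$ in fact contains at least three of them and equals their common join; any three such conjugates then form an $N_3$ in the interval $[Z,U]$, contradicting distributivity. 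The delicate points are the choice of $q$ (so that $Q$ really does act nontrivially on $A$) and the verification of these pairwise meets and joins; alternatively one may simply quote Ore's theorem as recorded in \cite[Chapter~2]{schmidt}. Hence $G$ is abelian, and therefore cyclic.
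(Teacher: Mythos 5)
Your proof is correct and follows essentially the same route as the paper: regularity makes the point stabiliser trivial, so the lattice of $G$-invariant partitions is the full subgroup lattice of $G$, and the statement reduces to the finite case of Ore's theorem, which the paper likewise simply quotes from Schmidt's book. Your extra self-contained sketch is a bonus rather than a divergence, though note that its delicate final step (the non-abelian case, in particular the existence of a normal cyclic Hall $q'$-subgroup) is precisely the part you end up deferring to the citation anyway, so the proof ultimately rests on the same reference as the paper's.
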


This is true because a group with distributive subgroup lattice is locally
cyclic, by Ore's theorem \cite[Section 1.2]{schmidt}, and a finite locally
cyclic group is cyclic. Since a cyclic group is Dedekind, it is pre-primitive
and so has the OB property.

However, there is no general result along these lines. Even if we assume that
the lattice of $G$-invariant partitions is a \emph{Boolean lattice}
(isomorphic to the lattice of subsets of a finite set), the group may fail to
have the OB property, as the next example shows.

\paragraph{Example} Let $G=\mathrm{GL}(n,q)$ acting on the set of maximal
chains of non-trivial proper subspaces
\[V_1<V_2<\cdots<V_{n-1}\]
in the vector space $V=\mathrm{GF}(q)^n$, where $\dim(V_k)=k$ for $0<k<n$.
The stabiliser $B$ of such a chain is a \emph{Borel subgroup} of $G$; if we 
take $V_k$ to be spanned by the first $k$ basis vectors, then $B$ is the group
of upper triangular matrices with non-zero entries on the diagonal. From the
theory of algebraic groups, it is known that the only subgroups of $G$
containing $B$ are the \emph{parabolic subgroups}, the stabilisers of subsets
of $\{V_1,\ldots,V_{n-1}\}$ (see for example \cite{humphreys} for the theory).
Hence the lattice of $G$-invariant partitions is isomorphic to the Boolean
lattice $B_{n-1}$ of subsets of $\{1,\ldots,n-1\}$ (the isomorphism reverses
the order since the stabiliser of a smaller set of subspaces is larger).

However, the equivalence relations do not all commute. Consider the relations
$\Pi_1$ and $\Pi_2$ corresponding to the subgroups fixing $V_1$ and $V_2$.
Thus, two chains are in the relation $\Pi_1$ if they contain the same
$1$-dimensional subspace, and similarly for $\Pi_2$. Now starting from the
chain $(V_1,V_2,\ldots,V_n)$, a move in a part of $\Pi_2$ fixes $V_2$ and
moves $V_1$ to a subspace $V_1'$ of $V_2$; then a move in $\Pi_1$ fixes
$V_1'$, so the resulting chain begins with a subspace of $V_2$. But if
we move in a part of $\Pi_1$, we can shift $V_2$ to a different $2$-dimensional
subspace, and then a move in a part of $\Pi_2$ can take $V_1$ to a subspace
not contained in $V_2$. So $\Pi_1\circ\Pi_2\ne\Pi_2\circ\Pi_1$, and the
lattice is not an OBS.

So $G$ does not have the OB property, even though the lattice of $G$-invariant
partitions is a Boolean lattice (and hence distributive).

\subsection{Generalised wreath products}

In this section, we prove two main results. The first describes the
group-theoretic structure of a generalised wreath product, and will be needed
later. The second investigates properties of the generalised wreath product
of primitive groups; in particular, they are pre-primitive
and hence have the OB property, and we give necessary and sufficient
conditions for them to have the PB property. 

\subsubsection{A group-theoretic result}

First we prove a result about generalised wreath products which will be needed
later.

We note that, if $p$ is a minimal element of a poset $M$, then $\{p\}$ is a
down-set, and so corresponds to a partition $\Pi$ of the domain $\Omega$ of
the generalised wreath product of a family of groups over $M$.

\begin{theorem}
Let $G$ be the generalised wreath product of the groups $G(m)$ over a
poset~$M$, acting on a set $\Omega$. Let $p$ be a minimal element of $M$. Let
$\Pi$ be the corresponding partition of $\Omega$, $H$ the group induced on the
set of parts by $G$, $N$ the stabiliser of all parts of $\Pi$. Then
\begin{enumerate}
\item $H$ is isomorphic to the generalised wreath product of the groups $G(q)$
for $q\in M\setminus\{p\}$;
\item $N$ is a direct product of copies of $G(p)$, where there is an
equivalence relation $\sim$ on the set of parts of $\Pi$ (determined by the poset $M$)
such that each direct factor acts in the same way on the parts in one
equivalence class and fixes every point in the other parts;
\item $G$ is a semidirect product $N\rtimes H$.
\end{enumerate}
\label{t:gwp_sdp}
\end{theorem}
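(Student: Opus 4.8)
The plan is to work directly with the explicit description of the generalised wreath product $G=\prod_{i=1}^N F_i$, where $F_i$ is the set of functions $\Omega^i\to G(m_i)$ and $\Omega^i=\prod_{j\in A(i)}\Omega_j$. Label the minimal element $p=m_1$, so that $A(1)=\{j: m_1\sqsubset m_j\}$ and $1\notin A(j)$ for any $j$ (minimality means nothing lies below $m_1$). The partition $\Pi$ attached to the down-set $\{m_1\}$ has $R_{\{m_1\}}(\omega,\omega')$ iff $\omega_i=\omega'_i$ for all $i\neq 1$; thus the parts of $\Pi$ are indexed by $\Omega'=\prod_{i\neq 1}\Omega_i$, and moving within a part only changes the first coordinate. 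I would first identify $N$, the kernel of the action of $G$ on the parts of $\Pi$: from the action formula $(\omega f)_i=\omega_i(\omega\pi^i f_i)$, an element $f=\prod f_i$ fixes every part of $\Pi$ iff $(\omega f)_i=\omega_i$ for all $i\neq 1$ and all $\omega$; since for $i\neq 1$ the first coordinate does not appear among the inputs to $f_i$ (as $1\notin A(i)$), this forces $f_i$ to be the constant identity function for every $i\neq 1$, so $N=F_1$, the set of functions $\Omega^1\to G(p)$. The complement $H$ consists of those $f$ with $f_1$ the identity function, i.e.\ $H\cong\prod_{i\neq 1}F_i$; one checks $H$ is a subgroup and that $N$ is normalised by it (conjugation reindexes the input of an element of $F_1$), giving part (c), $G=N\rtimes H$.

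For part (a), I would show $H\cong\prod_{i\neq 1}F_i$ acts on the set of parts of $\Pi$ exactly as the generalised wreath product of the $G(q)$, $q\in M\setminus\{p\}$, over the induced subposet. The parts of $\Pi$ are identified with $\Omega'=\prod_{i\neq 1}\Omega_i$, and for $i\neq 1$ the set $A(i)$ computed inside $M$ equals the set $A(i)$ computed inside $M\setminus\{p\}$ (removing a minimal element never removes something that lies \emph{above} $m_i$), so $\Omega^i$ and hence $F_i$ are unchanged. The action formula for $H$ on $\Omega'$ is literally the defining action of the generalised wreath product over $M\setminus\{p\}$; this is essentially a bookkeeping check. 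The kernel of the $H$-action on parts is trivial (if $f_i$ is the constant identity for all $i\neq 1$ then $f$ is the identity of $H$), so $H$ really is this generalised wreath product, not just a quotient.

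For part (b), I analyse $N=F_1=\{\text{functions }\Omega^1\to G(p)\}$ as a permutation group on $\Omega$. An element $f_1\in F_1$ sends $\omega=(\omega_1,\dots,\omega_N)$ to $(\omega_1(\omega\pi^1 f_1),\omega_2,\dots,\omega_N)$, i.e.\ it multiplies the first coordinate by the permutation $f_1$ assigns to the $\Omega^1$-part of $\omega$. Since $F_1$ is the \emph{full} function space, $N\cong G(p)^{|\Omega^1|}$ as an abstract group, with one factor $G(p)$ for each value $v\in\Omega^1$. Define $\sim$ on the parts of $\Pi$ (i.e.\ on $\Omega'=\prod_{i\neq 1}\Omega_i$) by: two parts are equivalent iff they project to the same element of $\Omega^1$ under the natural map $\Omega'\to\Omega^1$ (well-defined since $1\notin A(1)$, so $A(1)\subseteq\{2,\dots,N\}$). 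Then the $G(p)$-factor indexed by $v\in\Omega^1$ acts identically on all parts whose $A(1)$-coordinates equal $v$ (permuting the first coordinate there by the same element of $G(p)$) and fixes every point in all other parts; this is exactly the assertion in (b). The equivalence classes of $\sim$ are the fibres of $\Omega'\to\Omega^1$, so $\sim$ is indeed determined by the poset.

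The main obstacle I anticipate is purely notational rather than mathematical: keeping straight which coordinates $j\in\{1,\dots,N\}$ lie in $A(i)$ for which $i$, and verifying carefully that deleting the minimal element $p$ from $M$ leaves all the sets $A(i)$ for $i\neq 1$ unchanged — this is what makes the factors $F_i$ in (a) literally the same objects, and it relies on minimality of $p$ (so $p$ is never an ancestor of anything). Once that indexing is pinned down, each of (a), (b), (c) is a direct verification from the action formula $(\omega f)_i=\omega_i(\omega\pi^i f_i)$, with (c) requiring in addition the routine check that $H$ normalises $N=F_1$ and that $N\cap H=1$.
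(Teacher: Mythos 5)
Your argument is correct. For parts (a) and (c) it is essentially the paper's own proof: deleting the minimal element $p=m_1$ changes no ancestor set $A(i)$ for $i\neq 1$, so the action induced on the parts of $\Pi$ is literally the generalised wreath product over $M\setminus\{p\}$, and the complement realising (c) is, in both treatments, the set of elements acting as the identity on the $p$-th coordinate (your subgroup of those $f$ with $f_1$ constantly trivial). The genuine difference is in (b). The paper defines $\sim$ lattice-theoretically ($\mu\sim\sigma$ if and only if $\mu$ and $\sigma$ lie in the same part of $\Pi\vee\Phi$ for every partition $\Phi$ of the block structure incomparable to $\Pi$) and then argues both implications, reducing to join-indecomposable $\Phi$ via distributivity. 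You instead identify $N$ with $F_1\cong G(p)^{\Omega^1}$, noting that the group operation of $G$ restricts to the pointwise product on $N$ because elements of $N$ do not disturb the coordinates in $A(1)$, and you exhibit $\sim$ explicitly as the fibres of the projection of the part-index set $\prod_{i\neq 1}\Omega_i$ onto $\Omega^1=\prod_{j\in A(1)}\Omega_j$. This buys a completely explicit description of the direct factors and of $\sim$, for which the required properties are immediate from the action formula; it also behaves transparently in the degenerate case where no partition of the structure is incomparable to $\Pi$ (for instance $M$ a chain, where the classes must be singletons and the paper's quantification over incomparable $\Phi$ is vacuous and has to be read with care). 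The verifications you defer as routine, namely that $N$ consists exactly of those $f$ with $f_i$ constantly the identity for all $i\neq 1$ (vary $\omega_i$ independently of $\omega\pi^i$), that your $H$ is a subgroup, and that $NH=G$ with $N\cap H=1$, do all go through directly from the action formula.
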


\begin{proof}
For (a), we note that, since $p$ is minimal, suppressing the
$p$th coordinate of every tuple in $\Omega$ gives the generalised wreath
product of the remaining groups indexed by the elements different from $p$.

\medskip

Part (b) is proved using the definition of a generalised wreath product.
The equivalence relation is defined as follows: for parts $P$ and $Q$
of $\Pi$, $P\sim Q$
if and only if $P$ and $Q$ lie in the same part of 
$\Pi\vee \Phi$ for all partitions $\Phi$ of the poset block structure
defined by $M$ which are incomparable to~$\Pi$.

First note that since $N$ fixes the parts of $\Pi$, it must also fix the parts
of every partition lying above $\Pi$. Therefore, only parts of partitions
incomparable to $\Pi$ can be moved by $N$. Now let $\Phi$ denote a
partition incomparable to $\Pi$. Note that since
$\Pi\preccurlyeq \Pi\vee \Phi$, the parts of $\Phi$ contained in the same
part of $\Pi\vee \Phi$ can only be permuted amongst themselves by $N$. Hence,
if the actions of $h\in N$ on two parts of $\Pi$ are equivalent, then those two parts must be contained in the same part of
$\Pi\vee \Phi$ for every partition $\Phi$ of $\Omega$ incomparable to $\Pi$.

It now remains to show that if $P,Q\in \Pi$ are such that
$P\sim Q$, then $N$ acts in the same way on $P$ and $Q$.
Let $\gamma, \delta$ lie in $P$ and $Q$, and moreover suppose that they
are contained in the same part of $\Phi$ for every partition $\Phi$ of $\Omega$
incomparable to $\Pi$. It suffices to show that every $h\in N$ maps $\gamma$
and $\delta$ to the same part of $\Phi$ for every $\Phi$ incomparable
to~$\Pi$.

Now $h$ can be written as a product $\prod_{\Phi} h_{\Phi}$, where each factor $h_{\Phi}$ encodes the permutation induced by $h$ of the parts of the corresponding partition $\Phi$ of $\Omega$ induced by $h$. Hence, it suffices to show that $h_{\Phi}$ maps $\gamma$ and $\delta$ to the same part for an arbitrary partition $\Phi$ of $\Omega$ incomparable to $\Pi$. 
We may assume without loss of generality that $\Phi$ is join-indecomposable,
since every element is a join of JI elements, and the distributive law implies
that if a collection of JI elements are incomparable with $\Pi$ then so is
their join.

Let $m$ be the element corresponding to $\Phi$ in the poset $M$. Using the notation established in~\cite{bprs}, we note that $\gamma$ and $\delta$ must be such that $\gamma_i = \delta_i$ for all $i\sqsupseteq m$ in $M$. Therefore,
\[
(\gamma h_{\Phi})_i = \gamma_i(\gamma \pi^i (h_{\Phi})_i) = \delta_i(\delta \pi^i (h_{\Phi})_i) = (\delta h_{\Phi})_i
\]
for all $i\sqsupseteq m$, which proves the claim.

We finally note that $\sim$ is dependent only on the poset $M$ and not the group $G$.

\medskip

For (c), we have to show that $H$ normalises $N$ and that the action of $H$
extends to $\Omega$. The first statement is clear since $N$ is the subgroup
fixing all parts of $\Pi$. For the second, note that $H$ acts on the set
of $(|M|-1)$-tuples; extend each element to act on $|M|$-tuples by acting
as the identity on the $p$-th coordinate.\qed
\end{proof}

\subsubsection{Generalised wreath products of primitive groups}

In this section, we will use the notation for poset block structures and generalised wreath products defined in Section~\ref{s:pbs}. Moreover, let $[N]$ denote the set $\{1, \ldots, N\}$, and for every subset $J$ of $M$, let $X_J$ be the index set of $J$, namely $\{i\in [N] \, : \, m_i\in J\}$. We then define $P_J$ to be the partition of $\Omega$ whose set of parts is 
\[
\left\{\prod_{j \in X_J} \Omega_j \times \prod_{k\in [N]\setminus X_J} \{\alpha_k\} : \alpha_k \in \Omega_k \hbox{ for all } k\in [N]\setminus X_J\right\}.
\]

We now prove a small lemma that will be used in the proof of Theorem~\ref{thm-gwpPP}.

\begin{lemma}
Let $G$ be the generalised wreath product of the groups $G(m_i)$ over the
poset $M$.
Let $J, K$ be down-sets of $M$ such that $P_K \preccurlyeq P_J$, let $\Gamma$ be a part of $P_{J}$, and let $\Delta$ be the set of parts of $P_K$ contained in $\Gamma$. Then the permutation group $G(\Delta,\Gamma)$ induced by the setwise stabiliser of $\Gamma$ on $\Delta$ is isomorphic to the generalised wreath product of the groups $G(m_i)$ for $i\in X_J\setminus X_K$.
\end{lemma}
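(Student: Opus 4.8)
The plan is to construct an explicit isomorphism between $G(\Delta,\Gamma)$ and the generalised wreath product $\widetilde{G}$ of the groups $G(m_i)$ for $i\in X_J\setminus X_K$, taken over the sub-poset $M'$ induced by $M$ on $\{m_i : i\in X_J\setminus X_K\}$; write $G_\Gamma$ for the setwise stabiliser of $\Gamma$ in $G$. First I would set up the identification of $\Delta$. Since $J$ and $K$ are down-sets, $P_K\preccurlyeq P_J$ forces $X_K\subseteq X_J$; the part $\Gamma$ of $P_J$ consists of all $\omega\in\Omega$ with $\omega_k=\alpha_k$ for $k\notin X_J$, for some fixed tuple $(\alpha_k)_{k\notin X_J}$, and the parts of $P_K$ lying inside $\Gamma$ are obtained by additionally prescribing the coordinates indexed by $X_J\setminus X_K$; this gives a bijection between $\Delta$ and $\prod_{i\in X_J\setminus X_K}\Omega_i$. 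I would also record two consequences of the down-set hypothesis: if $i\notin X_J$ then $A(i)\cap X_J=\emptyset$ (which is why $P_J$ is $G$-invariant and $\Gamma$ a block), and if $i\in X_J\setminus X_K$ then $A(i)\cap X_K=\emptyset$, so that in the latter case $A(i)$ is the disjoint union of $A(i)\cap(X_J\setminus X_K)$ — exactly the set of $M'$-ancestors of $m_i$ — and $A(i)\setminus X_J$.

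Next I would define the candidate map $\Phi\colon G_\Gamma\to\widetilde{G}$ by \emph{specialisation}. Given $f=\prod_{i=1}^{N}f_i\in G_\Gamma$ and $i\in X_J\setminus X_K$, the second fact above lets me view each element of the domain $\Omega^i$ of $f_i\colon\Omega^i\to G(m_i)$ as a pair consisting of a tuple over the $M'$-ancestors of $m_i$ and a tuple over $A(i)\setminus X_J$; freezing the second component to the matching entries of $(\alpha_k)_{k\notin X_J}$ leaves a function on $\prod_{j\in A(i)\cap(X_J\setminus X_K)}\Omega_j$, that is, an element of the $i$th factor of $\widetilde{G}$; let $\Phi(f)$ collect these. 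I would then check three points: (i) $\Phi$ is a group homomorphism; (ii) under the bijection $\Delta\leftrightarrow\prod_{i\in X_J\setminus X_K}\Omega_i$, the element $\Phi(f)$ acts on $\Delta$ in the same way as $f$ does — this is a direct substitution into the defining formula $(\omega f)_i=\omega_i(\omega\pi^i f_i)$, using that for $\omega\in\Gamma$ and $i\in X_J\setminus X_K$ the projection $\omega\pi^i$ involves only coordinates that are either frozen (lying outside $X_J$) or free (lying in $X_J\setminus X_K$); and (iii) $\Phi$ is surjective, by lifting a given $\widetilde{f}\in\widetilde{G}$: extend each of its factors to a function on the larger domain $\Omega^i$ that simply ignores the coordinates outside $X_J\setminus X_K$, and take the identity in every factor indexed by $X_K$ or by $[N]\setminus X_J$; one checks directly that the result lies in $G_\Gamma$ and is sent to $\widetilde{f}$. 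Granting these, $G(\Delta,\Gamma)$ — which is by definition the image of $G_\Gamma$ in $\Sym(\Delta)$ — equals the image of the composite $G_\Gamma\xrightarrow{\Phi}\widetilde{G}\to\Sym(\Delta)$ by (ii); by (iii) this is the image of the full action of $\widetilde{G}$ on $\Delta$; and since $\widetilde{G}$ acts faithfully on $\Delta$ (being a subgroup of the automorphism group of the poset block structure over $M'$, by Proposition~\ref{p:pbs}), this image is isomorphic to $\widetilde{G}$, as claimed. In particular no separate computation of the kernel of $\Phi$ is needed.

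The step I expect to be the main obstacle is (i), since the group operation on a generalised wreath product is defined through its action rather than coordinatewise. I would handle it by first isolating the ``locality'' of the multiplication: the $i$th factor $(fg)_i$ of a product is determined by the factors $f_j,g_j$ with $j\in A[i]$ alone. For $i\in X_J\setminus X_K$ the set $A[i]$ avoids $X_K$, and on $\Gamma$ its coordinates lying outside $X_J$ are frozen, so specialising at $(\alpha_k)_{k\notin X_J}$ commutes with this localised multiplication, yielding $\Phi(fg)=\Phi(f)\Phi(g)$. An alternative route that reuses earlier machinery is to argue in two stages: restriction to $\Gamma$ identifies $G_\Gamma$ acting on $\Gamma\cong\prod_{i\in X_J}\Omega_i$ with the generalised wreath product over the sub-poset on $\{m_i : i\in X_J\}$ (again via a specialisation/extension argument), after which one inducts on $|X_K|$, at each step peeling off a minimal element of $K$ and invoking Theorem~\ref{t:gwp_sdp}(a) together with the observation that the subgroup fixing all parts of that one partition already acts trivially on the parts of $P_K$; the combinatorial content is the same either way.
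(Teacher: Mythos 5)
Your proposal is correct, and all the facts it relies on do hold: $P_K\preccurlyeq P_J$ gives $X_K\subseteq X_J$; for $i\notin X_J$ one has $A(i)\cap X_J=\emptyset$, and for $i\in X_J\setminus X_K$ one has $A(i)\cap X_K=\emptyset$, so the specialisation map $\Phi$ is well defined, intertwines the action of $G_\Gamma$ on $\Delta$ with the action of the target generalised wreath product on $\prod_{i\in X_J\setminus X_K}\Omega_i$, and is surjective via the extension-by-identity lifting; combined with faithfulness of the target on its domain this yields the lemma (and, as you note, check (ii) plus faithfulness already forces (i), so your direct verification of the homomorphism property is a bonus rather than a necessity). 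The route is genuinely different in mechanics from the paper's, though it rests on the same coordinate split $X_K$, $X_J\setminus X_K$, $[N]\setminus X_J$. The paper instead identifies the setwise stabiliser $G_\Gamma$ with the generalised wreath product of groups $H(m_i)$ (equal to $G(m_i)$ for $i\in X_J$ and to the point stabilisers $(G(m_i))_{\alpha_i}$ otherwise), identifies the kernel of the permutation representation $\rho$ of $G_\Gamma$ on $\Delta$ with the generalised wreath product of groups $L(m_i)$ (full on $X_K$, trivial on $X_J\setminus X_K$, point stabilisers outside $X_J$), and reads off $G(\Delta,\Gamma)\cong G_\Gamma/\ker\rho$ as the generalised wreath product over $X_J\setminus X_K$. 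That argument is shorter and yields the kernel explicitly as extra structural information, but it is terser at exactly the points your version makes explicit: strictly speaking $G_\Gamma$ is only constrained, for $i\notin X_J$, in the value of $f_i$ at the frozen argument $(\alpha_j)_{j\in A(i)}$, so the paper's equalities should be read up to the induced action, and the final quotient identification is asserted rather than computed. Your epimorphism-plus-faithfulness argument sidesteps both of these delicacies at the cost of a somewhat longer verification, and your closing two-stage alternative (restrict to $\Gamma$, then peel off minimal elements of $K$ using Theorem~\ref{t:gwp_sdp}(a), noting that minimal elements of a down-set are minimal in $M$) is also viable.
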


\begin{proof}
Let $\Gamma = \prod_{j \in X_J} \Omega_j \times \prod_{i\in [N]\setminus X_J} \{\alpha_i\}$, where $\alpha_i$ is fixed for $i\in [N]\setminus X_J$. Note that the setwise stabiliser $G_\Gamma$ inside $G$ must be equal to the generalised wreath product of the groups $H(m_i)$, where $H(m_i) = G(m_i)$ for all $i\in X_J$ and $H(m_i) = (G(m_i))_{\alpha_i}$ for $i\not\in X_J$. Now since the elements of $\Delta$ are blocks of imprimitivity of $G$, they are also blocks of $G_\Gamma$, and moreover, since $\Gamma$ is a block of $P_J$, it follows that $G_\Gamma$ induces a permutation group on $\Delta$. Let $\rho$ denote the associated permutation representation.

Note that every element of $\Delta$ is of the form $\prod_{i\in X_K}\Omega_i \times \prod_{i\in [N]\setminus X_K}\{\alpha_i\}$, where $\alpha_i$ is fixed for $i\in X_J\setminus X_K$. Therefore, $\ker\rho$ must fix all elements of $\Omega_i$ for $i\in X_J\setminus X_K$, must fix $\alpha_i$ for $i\in [N]\setminus X_J$, and can permute the elements of $\Omega_i$ for $i\in X_K$ in any way $G_\Gamma$ allows. Hence, $\ker \rho$ is equal to the generalised wreath product of $L(m_i)$, where $L(m_i) = G(m_i)$ for $i\in X_K$, $L(m_i) = 1$ for $i\in X_J\setminus X_K$, and $L(m_i) = G(m_i)_{\alpha_i}$ for $i\in [N]\setminus X_J$. We then deduce that 
\[
G(\Delta,\Gamma) \cong G_\Gamma/ \ker \rho, \]
the generalised wreath product of $G(m_i)$ for $i\in X_J\setminus X_K$,
as claimed. \qed
\end{proof}

We are now in a position to state and prove the main theorem of this section.

\begin{theorem}\label{thm-gwpPP}
If $G(m_i)$ is primitive for every $i\in [N]$, then the following hold for
their generalised wreath product $G$:
\begin{enumerate}
\item $G$ is pre-primitive, and hence has the OB property;
\item the following are equivalent:
  \begin{enumerate}
  \item $G$ has the PB property;
  \item the only $G$-invariant partitions are the ones corresponding to
down-sets in $M$;
  \item there do not exist incomparable elements $m_i,m_j\in M$ such that
$G(m_i)$ and $G(m_j)$ are cyclic groups of the same prime order.
  \end{enumerate}
\end{enumerate}
\end{theorem}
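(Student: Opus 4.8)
The plan is to prove $(a)$ first, and then establish the cycle of implications $(b)\Rightarrow(c)$, $(c)\Rightarrow(b)$ is perhaps most cleanly routed through $(d)$, i.e.\ to prove $(b)\Leftrightarrow(c)$ and $(c)\Leftrightarrow(d)$, where I have relabelled the three equivalent conditions as $(b),(c),(d)$. For part $(a)$, the key observation is that the domain $\Omega$ of the generalised wreath product is the Cartesian product $\prod_{i=1}^N\Omega_i$, and that $G$ contains, for each $i$, the subgroup $F_i$ of ``constant'' functions valued in $G(m_i)$ acting on the $i$th coordinate; more usefully, $G$ contains the direct product $G(m_1)\times\cdots\times G(m_N)$ acting coordinatewise (these are the constant functions). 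By Theorem~\ref{thm-directPP}, that direct product of primitive groups is pre-primitive, so every partition invariant under it is the orbit partition of a normal subgroup of the form $M_1\times\cdots\times M_N$ with $M_i\trianglelefteq G(m_i)$. Since $G$-invariant partitions are in particular invariant under this subgroup, each $G$-invariant partition is of the form $P_J$ for some subset $J$; the generalised wreath product structure (specifically the action formula $(\omega f)_i=\omega_i(\omega\pi^if_i)$, which only allows the $i$th coordinate to be moved using data from coordinates $j\in A(i)$) forces $J$ to be a down-set for the partition to be $G$-invariant, and conversely the construction in Section~\ref{s:pbs} shows each $P_J$ with $J$ a down-set \emph{is} $G$-invariant. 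Hence the $G$-invariant partitions are orbit partitions of the normal subgroups $\prod_{i\in X_J}G(m_i)\times\prod_{i\notin X_J}1$, so $G$ is pre-primitive, and the OB property follows from the Corollary that pre-primitive groups are OB.

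For the equivalences in part $(b)$: the implication ``$(d)$, i.e.\ the $P_J$ for down-sets $J$ are all the invariant partitions'' $\Rightarrow$ ``$(b)$, $G$ has the PB property'' is immediate, since by the construction in Section~\ref{s:pbs} the lattice of the $P_J$ (for $J$ ranging over down-sets of $M$) is distributive and the $P_J$ pairwise commute; that is exactly the poset block structure attached to $M$. For ``$(b)\Rightarrow(d)$'': if $G$ has the PB property its invariant partitions form a \emph{distributive} lattice, and I would argue that if there were an invariant partition not of the form $P_J$ then, restricting attention to the two incomparable coordinates producing it (see the next paragraph), the lattice would contain $N_3$ as a sublattice, contradicting distributivity by the characterisation theorem for distributive lattices. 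The crux is therefore the equivalence of $(d)$ with the arithmetic condition $(c)$.

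For ``not $(c)$ $\Rightarrow$ not $(d)$'': suppose $m_i,m_j$ are incomparable with $G(m_i)\cong G(m_j)\cong C_p$. Then $A(i)$ and $A(j)$ each exclude the other index, and on the ``sub-structure'' indexed by the down-set generated by $\{m_i,m_j\}$ the group $G$ induces (by the Lemma just proved, applied with $K$ the down-set $D(i)\cup D(j)$ below both) essentially $C_p\times C_p$ in its product action on $\mathbb{F}_p^2$; since $C_p$ and $C_p$ are \emph{not} partition-orthogonal (the diagonal/anti-diagonal partitions of $\mathbb{F}_p^2$, i.e.\ the parallel classes of the affine plane, are $C_p\times C_p$-invariant but not products), there is a $G$-invariant partition on this quotient not of product form, which pulls back to a $G$-invariant partition of $\Omega$ not of the form $P_J$. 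For the converse ``$(c)\Rightarrow(d)$'': assuming no two incomparable coordinate-groups are cyclic of the same prime order, I want every $G$-invariant partition to be some $P_J$. As in part $(a)$, $G$ contains $G(m_1)\times\cdots\times G(m_N)$, and by the theorem on partition-orthogonality of primitive groups from~\cite{a-mcs} together with Lemmas~\ref{lemma-orthogonal} and~\ref{lemma-orthogonal2}, the hypothesis $(c)$ makes all the factors \emph{pairwise partition-orthogonal}; hence by Lemma~\ref{lemma-orthogonal} every partition invariant under the direct product (a fortiori every $G$-invariant partition) is a product $\prod_i\Pi_i$ of partitions of the $\Omega_i$, and since each $G(m_i)$ is primitive each $\Pi_i$ is $E$ or $U$, so the partition is exactly $P_J$ with $X_J=\{i:\Pi_i=U\}$, and $G$-invariance forces $J$ to be a down-set.

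The main obstacle I anticipate is the argument that non-orthogonality of two incomparable cyclic-of-prime-order factors actually \emph{produces} an extra $G$-invariant partition of $\Omega$ (not merely of a quotient): one must check that the non-product $C_p\times C_p$-invariant partition on the relevant quotient lifts to a genuine partition of all of $\Omega$ which is invariant under the \emph{full} generalised wreath product $G$, not just under the constant-function subgroup — i.e.\ that the functional (non-constant) elements $f_i\in F_i$ also preserve it. This should follow because the extra partition is constant on coordinates $i,j$ ``twisted together'' while being trivial above, and the incomparability $m_i\not\sqsubseteq m_j$, $m_j\not\sqsubseteq m_i$ means neither coordinate's action depends on the other; but pinning down that the $f_k$ for $k$ \emph{above} $m_i$ or $m_j$ respect the twisted relation is the delicate point and will need the explicit action formula. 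The Lemma immediately preceding the theorem is presumably designed precisely to package this reduction to a quotient cleanly.
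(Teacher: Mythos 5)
There are two genuine errors. First, in your part (a): pre-primitivity of $K=G(m_1)\times\cdots\times G(m_N)$ does \emph{not} say that every $K$-invariant partition is the orbit partition of a normal subgroup of product form $M_1\times\cdots\times M_N$, only that it is the orbit partition of \emph{some} (normal) subgroup of $K$; for $K=C_p\times C_p$ the diagonal subgroups are normal but not of product form, and their orbit partitions are $K$-invariant yet not of the form $P_J$. Hence your conclusion in (a) that every $G$-invariant partition is $P_J$ for a down-set $J$ is precisely statement (b)(ii) of the theorem, which fails whenever (c) fails, and it contradicts the diagonal partitions you yourself construct later. The conclusion of (a) is nevertheless true and is obtained in the paper in one line from the ingredients you already have: $K$ embeds in $G$ as the constant functions, $K$ is pre-primitive by Theorem~\ref{thm-directPP}, and pre-primitivity is upward-closed.

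Second, your proof of (c)$\Rightarrow$(d) (and hence also your (b)$\Rightarrow$(d), which relies on the same claim) asserts that (c) makes all factors pairwise partition-orthogonal. That is false: (c) only excludes \emph{incomparable} pairs with groups cyclic of the same prime order, so e.g.\ a chain with both groups equal to $C_p$ satisfies (c) while the two factors are not partition-orthogonal, and $K$-invariant diagonal partitions do exist there (they are simply not $G$-invariant). So Lemma~\ref{lemma-orthogonal} cannot be invoked as you propose, and ``every $K$-invariant partition is a product'' is not available under (c). The missing idea — the core of the paper's argument — is to exploit $G$ itself rather than $K$: an extra $G$-invariant partition is $K$-invariant, hence either of product type over a non-down-set $J$ or of diagonal type for two same-prime cyclic factors $G(m_i),G(m_j)$; in the first case, and in the second case when $m_i\sqsubset m_j$, the subgroup of $G$ given by the iterated wreath product $G(a_0)\wr\cdots\wr G(a_k)$ along a chain from $m_i$ to $m_j$ cannot preserve such a partition, which forces the offending pair to be incomparable and so contradicts (c). By contrast, the ``delicate point'' you flag in not\,(c)$\Rightarrow$not\,(d) does go through essentially as you sketch: for two points related by the lifted diagonal partition, all coordinates outside the down-set $D[i]\cup D[j]$ (in particular all coordinates above $m_i$ or $m_j$, by incomparability) agree, so every functional component acts identically on the two points, while the coordinates strictly below are unconstrained; the paper treats this via the induced action of $G(m_i)\times G(m_j)$ on the interval $[\Pi_T,\Pi_S]$ and then uses the resulting partitions to exhibit an $N_3$ sublattice, which is also what destroys distributivity for the (i)$\Rightarrow$(iii) direction.
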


\begin{proof}
Let $K$ denote the direct product of the groups $G(m_i)$ (for $i\in[N]$)
in its product action and $P$ denote the lattice of partitions corresponding to down-sets in $M$.

(a) Since pre-primitivity is upward-closed, it suffices to show that $K$ can be embedded in $G$. Then the claim will follow by Theorem~\ref{thm-directPP}. Let $H$ be the set of all functions $f = \prod_{i\in [N]} f_i \in G$ such that $f_i$ sends all elements of $\Omega^i$ to the same element of $G(m_i)$ for all $i\in [N]$. We will show that $H$ is permutation isomorphic to~$K$.
 
 We first start by showing that $H\leq G$. To prove closure, it suffices to show that for $f, h\in H$, then $(fh)_i$ sends all elements of $\Omega^i$ to the same element of $G(m_i)$ for every $i\in [N]$. We can do this by showing that if $\gamma, \delta\in \Omega$, then $fh$ acts on both $\gamma$ and $\delta$ with the same group element on each coordinate. We will slightly abuse notation and for $f = \prod_{i\in [N]} f_i\in H$, we will write $\im(f_i)$ for the element that $f_i$ maps all the elements of $\Omega^i$ to, instead of the set containing just this element. Now let $g = \im(f_i)$ and $g' = \im(h_i)$, then
 \[
 (\gamma fh)_i = (\gamma f)_i(\gamma f\pi^ih_i) = \gamma_i(\gamma\pi^if_i)(\gamma f\pi^ih_i) = \gamma_igg' 
 \]
 for all $i\in [N]$. Similarly,
  \[
 (\delta fh)_i = (\delta f)_i(\delta f\pi^ih_i) = \delta_i(\delta\pi^if_i)(\delta f\pi^ih_i) = \delta_igg' 
 \]
for all $i\in [N]$, and therefore, $fh\in H$. For $i$ in $[N]$, let $z_i$ be the function which maps all the elements of $\Omega^i$ to the inverse of ${\rm im}(f_i)$. Put $z = \prod_{i\in [N]}z_i$. Then
\[
(\gamma fz)_i = (\gamma f)_i(\gamma f\pi^iz_i) = \gamma_i(\gamma\pi^if_i)(\gamma f\pi^iz_i) = \gamma_igg^{-1} = \gamma_i
\]
for all $i\in [N]$, and thus $z = f^{-1}\in H$.

We now need to show that $H$ is permutation isomorphic to $K$ in its product action on $\Omega$. Let $\phi: G \to K$ be the function defined by the formula $(\prod_{i\in [N]}f_i) \phi = \prod_{i\in [N]} \im(f_i)$, and let $\id$ denote the identity function. Note that $\phi$ is clearly a bijection by construction, and also
\[
(\delta f)_i = \delta_ig_i
\]
for all $i\in [N]$, where $g_i=\im(f_i)$, and hence
\[(\delta f) = \delta \left(\prod_{i\in[N]} \im(f_i)\right) = \delta (f \phi) = (\delta \id)(f\phi) ,\] 
 which completes the proof of (a). 
 
\medskip

We now prove (b).  

\vspace{3mm}

\noindent (ii) $\Rightarrow$ (i). Note that if the only partitions preserved by $G$ are the ones in $P$, then clearly $G$ is PB. 

\vspace{3mm}

\noindent (iii) $\Rightarrow$ (ii). We prove the contrapositive. Suppose that there are further partitions fixed by $G$ other than the ones corresponding to down-sets in $M$, and let $\Pi$ be such a partition. Since $K\leq G$, it follows that $\Pi$ is also preserved by $K$. Therefore, by the arguments in the proof of Theorem~\ref{thm-directPP} we deduce that since all of the $G(m_i)$s are primitive, one of the following must hold:
\begin{itemize}
\item All the $G(m_i)$s are mutually partition-orthogonal, and so $\Pi$ is of the form

\[
(\alpha_1, \alpha_2, \ldots, \alpha_n) \sim_J (\beta_1, \beta_2, \ldots, \beta_n) \iff (\forall i\not\in X_J)(\alpha_i = \beta_i),
\]

where $J$ is not a down-set of $M$;
\item at least two of the $G(m_i)$s, say $G(m_i)$ and $G(m_j)$, are cyclic of the same prime order, and $\Pi$ is a partition whose corresponding $G(m_i)$ and $G(m_j)$-fibre partitions are different to the $G(m_i)$ and $G(m_j)$-projection partitions respectively. Since the degree of $G(m_i)$ and $G(m_j)$ is prime, this can only happen if the fibre partitions are the partitions into singletons and the projection partitions are the partitions into a single part.
\end{itemize}

If $\Pi$ is of the first type, then there exist some $i, j\in [N]$ such that $m_i \sqsubset m_j$ and $m_j\in J$, but $m_i\not\in J$. Since $m_i$ and $m_j$ are comparable, there exists a chain $(m_i = a_0, a_1, \ldots, a_k = m_j)$ in $M$. Thus, $\Pi$ must be preserved by the wreath product $G(a_0)\wr G(a_1)\wr \ldots \wr G(a_k)$. However, we know that an imprimitive iterated wreath product cannot preserve partitions of equivalence relations where
\[
(\alpha_{a_0}, \ldots, \alpha_{a_k}) \sim (\beta_{a_0}, \ldots, \beta_{a_k}),
\] 
with $\alpha_{a_s} = \beta_{a_s}$ but $\alpha_{a_t} \neq \beta_{a_t}$ for some $s, t$ such that $s < t$, because for every $l\in \{1, \ldots, k\}$, the group $G(a_l)$ permutes whole copies of $\Omega_r$ for each $r \in \{0, \ldots, l-1\}$. 
 
 Hence, $\Pi$ must be of the second type and thus there exist $G(m_i)$ and $G(m_j)$ cyclic of the same prime order and $\Pi$ is a partition whose corresponding $G(m_i)$ and $G(m_j)$-fibre partitions are the partitions into singletons and the $G(m_i)$ and $G(m_j)$-projection partitions are the partitions into a single part. If $m_i$ and $m_j$ are related, say $m_i \sqsubset m_j$ then, as above, $\Pi$ must be preserved by the iterated wreath product  $G(a_0)\wr G(a_1)\wr \ldots \wr G(a_k)$. However, knowing what partitions imprimitive iterated wreath products preserve, we deduce that $G(a_0)\wr G(a_1)\wr \ldots \wr G(a_k)$ cannot preserve $\Pi$ and therefore $m_i$ and $m_j$ must be incomparable.
 
 \vspace{3mm}

\noindent (i) $\Rightarrow$ (iii). We again prove the contrapositive. So suppose that there are two incomparable
elements in $M$, say $m_1$ and $m_2$, such that the corresponding groups are
cyclic of the same prime order $p$. As defined in Section~\ref{s:pbs},
\[D(i)=\{m\in M:m\sqsubset m_i\}\]
for $i=1,2$. Set
\[S=D(1)\cup D(2),\quad Q=S\setminus\{m_1\}, \quad
R=S\setminus\{m_2\},\hbox{ and } T=Q\cap R.\]
These four sets are all down-sets, and the interval between $T$ and $S$ has the
group $G(m_1)\times G(m_2)$ acting, and so we can find partitions fixed by
the group, other than the ones corresponding to down-sets in $M$. More precisely, there are $p + 1$ partitions corresponding to orbit partitions of the diagonal subgroups of $G(m_1)\times G(m_2)$, and thus preserved by $G(m_1) \times G(m_2)$. If $Y$ is one of those, then $S, T, Q, R, Y$ form a $N_3$ sublattice (Figure~\ref{f:p5n3})
of the invariant partition lattice of $G$, and hence $G$ fails the PB property. This proves the claim. \qed
\end{proof}

\subsection{The embedding theorem}

The Krasner--Kaloujnine theorem~\cite{kk} says that, if $G$ is a transitive but
imprimitive permutation group on $\Omega$, then $G$ is embeddable in the wreath product
of two groups which can be extracted from $G$ (the stabiliser of a block acting
on the block, and $G$ acting on the set of blocks).

In this section, we extend this result to transitive groups which preserve a
poset block structure (a distributive lattice of commuting equivalence
relations). In particular, our result holds for groups with the PB property.
As explained in Subsection~\ref{s:pbs},
such a lattice $\Lambda$ is associated with a poset $M$ (so that $M$ consists
of the non-$E$ join-indecomposable elements of $\Lambda$, and $\Lambda$ consists
of the down-sets in $M$). We want to associate a group with each element
$m\in M$ such that $G$ is embedded in the generalised wreath product of these
groups over the poset $M$.

Our first attempt was as follows. Take $m\in M$; it corresponds to a
join-indecomposable partition $\Pi\in\Lambda$. The join-indecomposability of
$\Pi$ implies that there is a unique partition $\Pi^-$ in $\Lambda$
which is maximal with respect to being below $\Pi$. Then let $G(m)$ be
the permutation group induced by the stabiliser of a part of $\Pi$ acting
on the set of parts of $\Pi^-$ it contains.

However, this does not work. Take $G$ to be the symmetric group $S_6$. This
group has an outer automorphism, and so has two different actions on sets of
size $6$. Take $\Omega$ to be the Cartesian product of these two sets. The
invariant partitions for $G$ are $E$ and $U$ together with the rows $R$ and
columns $C$ of the square. Then $\{E,R,C,U\}$ is a poset block structure.
Both $R$ and $C$ are join-indecomposable, and
$R^-=C^-=E$. Thus $M$ is a $2$-element antichain $\{r,c\}$, and $G(r)$ is
the stabiliser of a row acting on the points of the row, which is the
group $\mathrm{PGL}(2,5)$, and similarly $G(c)$. However, $S_6$ is clearly
not embeddable in $\mathrm{PGL}(2,5)\times\mathrm{PGL}(2,5)$.

So we use a more complicated construction. Given $\Pi$ and $\Pi^-$ as 
above, where $\Pi$ corresponds to $m\in M$, 
let $\mathcal{G}(m)$ be the set of partitions $\Phi\in\Lambda$ satisfying
$\Phi\wedge\Pi=\Pi^-$. For $\Phi\in\mathcal{G}(m)$, let $G_\Phi(m)$
be the group induced on the set of parts of $\Phi$ contained in a given
part of $\Phi\vee\Pi$.

\begin{lemma}
\begin{enumerate}
\item $\mathcal{G}(m)$ is closed under join.
\item If $\Phi_1,\Phi_2\in\mathcal{G}(m)$ with $\Phi_1\preccurlyeq\Phi_2$,
then there is a canonical embedding of $G_{\Phi_1}(m)$ into
$G_{\Phi_2}(m)$.
\end{enumerate}
\label{l:mono}
\end{lemma}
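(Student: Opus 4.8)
The plan is to obtain part (a) straight from the distributive law, and to prove part (b) by constructing a natural bijection between the sets on which $G_{\Phi_1}(m)$ and $G_{\Phi_2}(m)$ act which intertwines the two stabiliser actions. For (a): if $\Phi_1,\Phi_2\in\mathcal{G}(m)$ then $\Phi_1\vee\Phi_2\in\Lambda$, and the distributive law gives
\[
(\Phi_1\vee\Phi_2)\wedge\Pi=(\Phi_1\wedge\Pi)\vee(\Phi_2\wedge\Pi)=\Pi^-\vee\Pi^-=\Pi^-,
\]
so $\Phi_1\vee\Phi_2\in\mathcal{G}(m)$; nothing more is needed.

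For (b) I would fix a point $\alpha\in\Omega$ and, for $i=1,2$, let $C_i$ be the part of $\Phi_i\vee\Pi$ containing $\alpha$, so that $G_{\Phi_i}(m)$ is the group induced by the setwise stabiliser $G_{C_i}$ of $C_i$ in $G$ on the set $X_i$ of parts of $\Phi_i$ contained in $C_i$. Since $\Phi_1\preccurlyeq\Phi_2$ we have $\Phi_1\vee\Pi\preccurlyeq\Phi_2\vee\Pi$, hence $C_1\subseteq C_2$; moreover, if $g\in G_{C_1}$ then $\alpha g\in C_1\subseteq C_2$, so $g$ also fixes $C_2$, and thus $G_{C_1}\le G_{C_2}$. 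The distributive law next gives
\[
\Phi_2\wedge(\Phi_1\vee\Pi)=(\Phi_2\wedge\Phi_1)\vee(\Phi_2\wedge\Pi)=\Phi_1\vee\Pi^-=\Phi_1,
\]
using $\Phi_1\preccurlyeq\Phi_2$ and $\Pi^-=\Phi_1\wedge\Pi\preccurlyeq\Phi_1$. Passing to the block $C_2$, which coarsens each of $\Phi_2$, $\Phi_1\vee\Pi$ and $\Phi_1$, this says that the partitions induced on $C_2$ by $\Phi_2$ and by $\Phi_1\vee\Pi$ commute, have join the universal partition of $C_2$, and have meet the partition induced by $\Phi_1$. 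Since two commuting partitions whose join is universal have the property that every part of one meets every part of the other in a single part of their meet, taking a part $B$ of $\Phi_2$ inside $C_2$ together with the part $C_1$ of $\Phi_1\vee\Pi$ shows that $B\cap C_1$ is a single part of $\Phi_1$; this yields a bijection $\beta\colon X_1\to X_2$, namely $B_1\mapsto{}$the unique part of $\Phi_2$ containing $B_1$, with inverse $B\mapsto B\cap C_1$.

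It then remains to check that $\beta$ is $G_{C_1}$-equivariant, which is immediate: for $g\in G_{C_1}$ and $B_1\in X_1$, the set $gB_1$ is a part of $\Phi_1$ inside $C_1$ contained in the part $g\,\beta(B_1)$ of $\Phi_2$, so $\beta(gB_1)=g\,\beta(B_1)$. Hence $\beta$ transports the action of $G_{C_1}$ on $X_1$ to its action on $X_2$, so $G_{\Phi_1}(m)$, the image of $G_{C_1}$ in $\Sym(X_1)$, is carried isomorphically onto the image of $G_{C_1}$ in $\Sym(X_2)$, which lies inside $G_{\Phi_2}(m)$, the image of $G_{C_2}$. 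Composing gives the embedding; it depends only on the nested pair $C_1\subseteq C_2$, and any other admissible choice alters it only by the conjugation relating the two realisations of each $G_{\Phi_i}(m)$, which is the sense in which it is canonical.

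The supporting facts invoked above --- that $G_{C_i}$ acts on $X_i$, that restriction to the block $C_2$ respects meets, joins and commuting, and the elementary statement about commuting partitions with universal join --- are routine. The step that needs real care, and the main obstacle, is the bijectivity of $\beta$: that each part of $\Phi_2$ in $C_2$ contains exactly one part of $\Phi_1$ lying in $C_1$. This is exactly where distributivity is used, through $\Phi_2\wedge(\Phi_1\vee\Pi)=\Phi_1$, and where the defining condition $\Phi_i\wedge\Pi=\Pi^-$ of $\mathcal{G}(m)$ is essential; the breakdown of an analogous statement without that condition is what sinks the ``first attempt'' construction (the $S_6$ example discussed above).
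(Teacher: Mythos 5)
Your proof is correct, and for part (b) it takes a genuinely different route from the paper's, so a comparison is worth recording. Part (a) is exactly the paper's argument. For part (b) the paper passes to the correspondence between $G$-invariant partitions and subgroups containing a point stabiliser $G_\alpha$: writing $H_1,H_2,P,P^-$ for the subgroups corresponding to $\Phi_1,\Phi_2,\Pi,\Pi^-$, it notes $H_i\cap P=P^-$ and that $\Phi_i\vee\Pi$ corresponds to $H_iP$, computes $|H_iP:H_i|=|P:H_i\cap P|=|P:P^-|$ so that coset representatives of $P^-$ in $P$ serve simultaneously as coset representatives of $H_i$ in $H_iP$ for $i=1,2$, and then gets the embedding from $H_1P\le H_2P$, leaving the equivariance implicit in ``the result holds''. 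You instead stay entirely on the partition side: the identity $\Phi_2\wedge(\Phi_1\vee\Pi)=\Phi_1$ (which in fact needs only the modular law, with $a=\Phi_1\preccurlyeq c=\Phi_2$ and $b=\Pi$, though distributivity is of course available here), combined with the elementary fact that two commuting partitions with universal join have every part of one meeting every part of the other in a single part of their meet, shows that each part of $\Phi_2$ inside $C_2$ meets $C_1$ in exactly one part of $\Phi_1$; this gives your bijection $\beta$, and you check $G_{C_1}$-equivariance and the inclusion $G_{C_1}\le G_{C_2}$ explicitly. The two constructions produce the same identification --- in coset language your $\beta$ sends $\alpha H_1x$ to $\alpha H_2x$ for $x\in P$ --- so the resulting embedding is the same canonical one. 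What your version buys is self-containedness and explicitness: it does not invoke the subgroup correspondence or Theorem~\ref{t:com}, and it spells out the equivariance the paper compresses; the cost is length, and the restriction-to-a-block facts you call routine do need the (easy) observation that $C_2$ is a union of parts of each partition involved, which you have effectively noted.
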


\begin{proof}
The first part is immediate from the distributive law: if 
$\Phi_1,\Phi_2\in\mathcal{G}(m)$, then
\[(\Phi_1\vee\Phi_2)\wedge\Pi=(\Phi_1\wedge\Pi)\vee(\Phi_2\wedge\Pi)
=\Pi^-\vee\Pi^-=\Pi^-.\]

For the second part, we use the fact that, for a given point $\alpha\in\Omega$,
there is a natural correspondence between partitions and certain subgroups of
$G$ containing $G_\alpha$, where the partition $\Pi$ corresponds to the setwise
stabiliser of the part of $\Pi$ containing $\alpha$; meet and join correspond
to intersection and
product of subgroups. Let $H_1$, $H_2$, $P$, $P^-$ be the subgroups
corresponding to $\Phi_1$, $\Phi_2$, $\Pi$, $\Pi^-$. Then the definition
of $\mathcal{G}(m)$ shows that $H_i\cap P=P^-$ for $i=1,2$, while the partitions
$\Phi_i\vee\Pi$ correspond to the subgroups $H_iP$. The actions we are
interested in are thus $H_iP$ on the cosets of $H_i$. We have
\[|H_iP:H_i|=|P:H_i\cap P|=|P:P^-|\]
for $i=1,2$; so coset representatives of $P^-$ in $P$ are also coset
representatives for $H_i$ in $H_iP$. Thus we have a natural correspondence
between these sets. Since $H_1\le H_2$, we have $H_1P\le H_2P$, and the
result holds.\qed
\end{proof}

Hence if $\Psi$ is the (unique) maximal element of $\mathcal{G}(m)$, then
the group $G_\Psi(m)$, which we will denote by $G^*(m)$, embeds all
the groups $G_\Phi(m)$ for $\Phi\in\mathcal{G}(m)$.

Now we can state the embedding theorem.

\begin{theorem}
Let $G$ be a transitive permutation group which preserves a poset block
structure $\Lambda$, and let $M$ be the associated poset.
Define the groups $G^*(m)$ for $m\in M$ as above. Then $G$ is embedded in the
generalised wreath product of the groups $G^*(m)$ over $m\in M$.
\label{t:embed}
\end{theorem}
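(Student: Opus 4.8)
The plan is to realise $\Omega$ explicitly as a Cartesian product $\prod_{m\in M}\Omega_m$ in such a way that $\Lambda$ becomes the \emph{standard} poset block structure over $M$. Once this is done, Proposition~\ref{p:pbs} shows that $G\le\Sym(\Omega)$ automatically lies inside the generalised wreath product of the full symmetric groups $\Sym(\Omega_m)$, and each $g\in G$ is described by a family of ``local functions'' $f_m^{(g)}\colon\Omega^i\to\Sym(\Omega_m)$ obeying the generalised-wreath-product action formula. The remaining task is then to choose the coordinatisation cleverly enough that every $f_m^{(g)}$ in fact takes values in $G^*(m)$. Throughout I would fix a basepoint $\alpha$ and use the order-preserving bijection between partitions in $\Lambda$ and subgroups of $G$ between $G_\alpha$ and $G$ (meet $\leftrightarrow$ intersection, join $\leftrightarrow$ product).

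The first step is to pin down $\Psi=\Psi_m$ combinatorially. By Lemma~\ref{l:mono}(a) the set $\mathcal{G}(m)$ has a greatest element $\Psi_m$; translating into down-sets (with $m=m_i$ corresponding to the principal down-set $D[i]$, so that $\Pi^-_m$ corresponds to $D(i)$) shows that $\mathcal{G}(m)$ consists of exactly those down-sets $\Phi$ with $i\notin X_\Phi$ and $D(i)\subseteq X_\Phi$, whence $\Psi_m$ corresponds to $M\setminus A[i]$ and $\Psi_m\vee\Pi_m$ to $M\setminus A(i)$. In particular the parts of $\Psi_m\vee\Pi_m$ are precisely the classes of ``equal coordinates on $A(i)$'' --- that is, the fibres of the projection $\pi^i$, indexed by $\Omega^i=\prod_{j\in A(i)}\Omega_j$ --- and inside each such part $C$ the parts of $\Psi_m$ form a set of size $n_i$ on which $G^*(m)$ is, by definition, the group induced by the setwise stabiliser of $C$.

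Now the coordinatisation. Let $C_m$ be the part of $\Psi_m\vee\Pi_m$ containing $\alpha$, and put $\Omega_m=\{\,\text{parts of }\Psi_m\text{ contained in }C_m\,\}$, a set of size $n_i$. Since $G$ is transitive on the set of parts of $\Psi_m\vee\Pi_m$, fix for each such part $c$ an element $h_c\in G$ with $h_{C_m}=1$ and $C_mh_c=c$. Define $\theta\colon\Omega\to\prod_m\Omega_m$ by $\theta_m(\omega)=[\omega]_{\Psi_m}\,h_{c(\omega)}^{-1}$, where $[\omega]_{\Psi_m}$ is the $\Psi_m$-part through $\omega$ and $c(\omega)$ is the $(\Psi_m\vee\Pi_m)$-part through $\omega$ (then $[\omega]_{\Psi_m}\subseteq c(\omega)$, so $\theta_m(\omega)$ really is a $\Psi_m$-part inside $C_m$). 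I would then verify: (i) $\theta$ is a bijection carrying each $\Pi_D$ to the standard coordinate partition --- here one uses $A[i]\cap X_D=\emptyset$ whenever $i\notin X_D$, and recovers the coordinates of $\omega$ one maximal element of $M$ at a time; and (ii) setting $f_m^{(g)}(c):=(h_c\,g\,h_{c\cdot g}^{-1})|_{\Omega_m}$ (where $c\cdot g$ is the image part), one has $\theta_m(\omega g)=\theta_m(\omega)\,f_m^{(g)}(c(\omega))$, i.e.\ $\theta$ intertwines the $G$-action with the generalised-wreath-product action determined by the $f_m^{(g)}$. Both identities reduce to the fact that $g\in G$ preserves the $G$-invariant partitions $\Psi_m$ and $\Psi_m\vee\Pi_m$.

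It then only remains to observe that $h_c\,g\,h_{c\cdot g}^{-1}$ maps $C_m$ to $c$, then to $c\cdot g$, then back to $C_m$, so it lies in the setwise stabiliser of $C_m$; since it also preserves the $G$-invariant partition $\Psi_m$, its restriction to $\Omega_m$ lies in $G^*(m)$. Hence every $f_m^{(g)}$ is a function $\Omega^i\to G^*(m)$, $\theta$ conjugates $G$ into the generalised wreath product of the $G^*(m)$ over $M$, and faithfulness is immediate from the action formula. I expect the main obstacle to be the verification (i)--(ii) of the previous paragraph: one must confirm that the transversals $\{h_c\}$, chosen separately for each $m$, nonetheless assemble into a single well-defined coordinatisation $\theta$ that trivialises every $\Pi_D$ simultaneously and intertwines the actions --- this is exactly the point where one is forced to use the down-set description of $\Lambda$ and the properties of $\mathcal{G}(m)$ from Lemma~\ref{l:mono}, and it explains why the genuinely larger group $G^*(m)$ (rather than the naive group induced on the parts of $\Pi^-_m$ inside a part of $\Pi_m$) is the right object. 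An alternative route, avoiding a global coordinatisation, is to apply the iterated Krasner--Kaloujnine embedding along each linear extension of $M$ and then intersect, invoking Theorem~\ref{t:poset2gwp} and Corollary~\ref{c:linext}; this trades the coordinatisation bookkeeping for the task of reconciling the embeddings obtained from different linear extensions.
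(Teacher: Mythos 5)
Your proposal is correct, but it proves the theorem by a genuinely different route from the paper. The paper argues by induction on $|M|$: it strips off a minimal element $p$, uses Lemma~\ref{l:ji} to identify the interval $[\Pi_0,U]$ with the down-set lattice of $M\setminus\{p\}$, uses Lemma~\ref{l:mono}(b) to show $\bar G^*(q)\le G^*(q)$ for the groups computed in that interval, and then controls the kernel $N_0$ fixing the parts of $\Pi_0$ via Proposition~\ref{p:pbs} and the semidirect decomposition of Theorem~\ref{t:gwp_sdp}. You instead give a direct, non-inductive construction generalising the classical transversal proof of Krasner--Kaloujnine: you coordinatise $\Omega$ as $\prod_i\Omega_i$ (legitimate, since the paper records that every poset block structure arises from the standard construction), identify $\Psi_m$ with the down-set $M\setminus A[i]$ and $\Psi_m\vee\Pi_m$ with $M\setminus A(i)$ so that its parts are the $\pi^i$-fibres, and define the recoordinatisation $\theta_m(\omega)=[\omega]_{\Psi_m}h_{c(\omega)}^{-1}$ together with the cocycle-type local maps $f^{(g)}_m(c)=(h_c\,g\,h_{c\cdot g}^{-1})|_{\Omega_m}$. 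The verifications you flag do go through: $\theta_j(\omega)$ depends only on the old coordinates in $A[j]$, and a downward induction from the maximal elements of $M$ (for $j'\in A(j)$ one has $A[j']\subseteq A(j)$) recovers the old coordinates on $A[j]$ from the new ones, which simultaneously gives bijectivity of $\theta$, the identification of each $\Pi_D$ with a coordinate partition, and---crucially---the fact that $f^{(g)}_m$ is a well-defined function on $\Omega^i$; the intertwining identity and the membership $f^{(g)}_m(c)\in G^*(m)$ are then the short computations you indicate. What each approach buys: the paper's induction is economical because it leans on machinery already established (Lemmas~\ref{l:eq}, \ref{l:ji}, \ref{l:mono}(b), Theorem~\ref{t:gwp_sdp}), whereas your argument is self-contained and explicit, uses only the existence of the greatest element $\Psi_m$ (Lemma~\ref{l:mono}(a)) rather than the canonical embeddings, produces the embedding concretely from a choice of transversals, and makes transparent exactly why the enlarged groups $G^*(m)$ (rather than the naive ones, cf.\ the $S_6$ example) are forced: maximality of $\Psi_m$ makes the parts of $\Psi_m\vee\Pi_m$ the $\pi^i$-fibres, which is precisely what lets the local permutation in coordinate $i$ depend only on the ancestor coordinates $A(i)$. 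The price is the bookkeeping of the simultaneous recoordinatisation, which you have correctly identified as the main point to write out carefully; your alternative suggestion via linear extensions and Corollary~\ref{c:linext} would indeed founder on reconciling the differently chosen transversals, so the direct construction is the right variant to pursue.
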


We remark that this theorem generalises the theorem of Krasner and Kaloujnine.
If $\Pi$ is a non-trivial $G$-invariant partition, then $\{E,\Pi,U\}$ is a
poset block structure; the corresponding poset is $M=\{m_1,m_2\}$, with
$m_1$ and $m_2$ corresponding to the partitions $\Pi$ and $U$; this
$G^*(m_1)$ is the group induced by the stabiliser of a part of $\Pi$ on its
points, and $G^*(m_2)$ the group induced by $G$ on the parts of $\Pi$, as
required.

The proof uses properties of distributive lattices: we deal with some of
these first. Since these lemmas are not specifically about lattices of
partitions, we depart from our usual convention and use lower-case italic
letters for elements of a lattice, and $0$ and $1$ for the least and
greatest elements respectively.

\begin{lemma}\label{lem:cancellation}
Let $L$ be a distributive lattice. If $a,x,y\in L$ satisfy
\[a\wedge x=a\wedge y\hbox{ and }a\vee x=a\vee y,\]
then $x=y$.
\label{l:eq}
\end{lemma}

\begin{proof}
Suppose first that $x\le y$. Then
\begin{eqnarray*}
y &=& y\vee(a\wedge x) \\
&=& (y\vee a)\wedge(y\vee x)  \\
&=& (x\vee a)\wedge(x\vee y)  \\
&=& x\vee(a\wedge y) \\
&=& x\vee(a\wedge x) \\
&=& x.
\end{eqnarray*}

Now let $x$ and $y$ be arbitrary, and put $z=x\wedge y$. Then $z\le x$ and
\begin{eqnarray*}
a\wedge z &=& (a\wedge x)\wedge(a\wedge y)=a\wedge x\\
a\vee z &=& (a\vee x)\wedge (a\vee y)=a\vee x.
\end{eqnarray*}
By the first part, $z=x$. Similarly $z=y$, so $x=y$.\qed
\end{proof}

\noindent \textbf{Remark.} The identity in Lemma~\ref{lem:cancellation} is commonly known as the \emph{cancellation property for distributive lattices}, which appears in~\cite{dp} as Exercise 6.6. Note that distributivity and the cancellation property are in fact equivalent lattice properties, but we only use one direction here, so we only prove the direction we use. The converse can be proved using Theorem~\ref{thm:diamond-pentagon}. In particular, it is clear that $P_5$ and $N_3$ do not admit cancellation.

\begin{lemma}
Suppose that $L$ is the lattice of down-sets in a poset $M$. Let $p$ be
a minimal element of $M$ (so that $\{p\}$ is a down-set). Then the
interval $[\{p\},1]$ in $L$ is isomorphic to the lattice of down-sets
in $M\setminus\{p\}$.
\label{l:ji}
\end{lemma}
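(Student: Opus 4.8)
The plan is to exhibit an explicit order-preserving bijection between the interval $[\{p\},1]$ in $L$ and the lattice of down-sets of $M\setminus\{p\}$, and then to invoke the fact that an order isomorphism between two lattices is automatically a lattice isomorphism.

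First I would identify the two sets concretely: the elements of $[\{p\},1]$ are exactly the down-sets $D$ of $M$ with $p\in D$, since $1$ is $M$ itself. Define a map $\theta$ on this interval by $\theta(D)=D\setminus\{p\}$, and a map $\theta'$ on down-sets $D'$ of $M\setminus\{p\}$ by $\theta'(D')=D'\cup\{p\}$. The only point needing care is that these maps are well defined, and this is precisely where minimality of $p$ is used. For $\theta$: if $m\in D\setminus\{p\}$ and $m'\sqsubseteq m$ with $m'\in M\setminus\{p\}$, then $m'\in D$ because $D$ is a down-set, and $m'\ne p$, so $m'\in D\setminus\{p\}$; thus $\theta(D)$ is a down-set of $M\setminus\{p\}$. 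For $\theta'$: suppose $m\in D'\cup\{p\}$ and $m'\sqsubseteq m$ in $M$. If $m=p$, then $m'\sqsubseteq p$ forces $m'=p$ by minimality, so $m'\in D'\cup\{p\}$; if $m\in D'$, then either $m'=p\in D'\cup\{p\}$, or $m'\ne p$, in which case $m'\in M\setminus\{p\}$ and $m'\sqsubseteq m\in D'$ gives $m'\in D'$. Hence $\theta'(D')$ is a down-set of $M$ containing $p$.

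Next I would check that $\theta$ and $\theta'$ are mutually inverse: $(D\setminus\{p\})\cup\{p\}=D$ since $p\in D$, and $(D'\cup\{p\})\setminus\{p\}=D'$ since $p\notin D'$. Both maps manifestly preserve inclusion, and inclusion is the lattice order on down-sets in either poset, so $\theta$ is an order isomorphism from $[\{p\},1]$ onto the lattice of down-sets of $M\setminus\{p\}$. Since the meet and join in each of these lattices are intersection and union of down-sets, and $\theta$ (being a set-theoretic operation that commutes with intersections and unions that contain $p$ appropriately) respects them, $\theta$ is a lattice isomorphism.

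I do not expect any real obstacle here; the proof is essentially the two well-definedness verifications above, and the single subtlety is to invoke minimality of $p$ at exactly those two spots, so that deleting $p$ from, or adjoining $p$ to, a down-set never disturbs the down-set condition on the remaining elements.
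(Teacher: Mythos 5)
Your proof is correct, and it takes a genuinely different and more elementary route than the paper's. You exploit the concrete realisation of $L$ as down-sets: the interval $[\{p\},1]$ consists of the down-sets containing $p$, and the maps $D\mapsto D\setminus\{p\}$ and $D'\mapsto D'\cup\{p\}$ are mutually inverse and inclusion-preserving, with minimality of $p$ invoked exactly where it is needed (so that adjoining $p$ keeps the down-set property); an order isomorphism of lattices is then automatically a lattice isomorphism. The paper instead argues abstractly inside the distributive lattice: setting $z=\{p\}$, it shows that $a\mapsto a\vee z$ is an order isomorphism from the join-indecomposables of $L$ other than $z$ onto the join-indecomposables of $[z,1]$, using the cancellation property of distributive lattices (Lemma~\ref{l:eq}) for injectivity and a case analysis for surjectivity, and then concludes via the representation of a distributive lattice by its poset of join-indecomposables (Theorem~\ref{t:ftdl}). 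What the paper's heavier argument buys is an explicit dichotomy for the join-indecomposables of the interval --- either a JI element of $L$ already above $z$, or of the form $\Psi\vee z$ with $\Psi$ JI in $L$ and $z\not\preccurlyeq\Psi$ --- and this description is quoted and used later in the proof of the embedding theorem (Theorem~\ref{t:embed}). Your argument is shorter and avoids the cancellation lemma entirely; if the downstream dichotomy were needed, it could still be recovered from your isomorphism, since the join-indecomposables of the down-set lattice of $M\setminus\{p\}$ are the principal down-sets, whose preimages under your map are exactly the joins of principal down-sets of $M$ with $\{p\}$.
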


\begin{proof}
Let $z=\{p\}$. Let $\JI(L)$ be the set of join-indecomposables in $L$. Since lattices are generated by their join-indecomposable elements, it suffices to
construct an order-isomorphism $F$ from $\JI(L)\setminus z$ to
$\JI([z,1])$. 
The map $F$ is defined by
\[F(a)=a\vee z\]
for $a\in\JI(L)\setminus\{z\}$. We have to show that it is a bijection and
preserves order. First we show that its image is contained in $\JI([z,1])$.

Take $a\in\JI(L)$, $a\ne z$. If $z\le a$, then $a\vee z=a$ and this is
join-indecomposable in $[z,1]$. Suppose that $z\not\le a$. If $a\vee z$
is not JI in $[z,1]$, then there exist $b,c\in[z,1]$ with $b,c\ne a\vee z$ and
$b\vee c=a\vee z$. Then
\[(a\wedge b)\vee(a\wedge c)=a\wedge(b\vee c)=a\wedge(a\vee z)=a.\]
 Since
$a$ is join-indecomposable, we have, without loss of generality, $a\wedge  b=a$, so $a\le b$. Since we also have $z\le b$, it follows that $a\vee z \le b$, and so $a\vee z = b$, a contradiction.

We show that the map is onto. Let $a\in\JI([z,1])$. If $a\in\JI(L)$ then
$a=F(a)$; so suppose not. Then $a=b\vee c$ for some $b,c\in L$. Then
\[z=a\wedge z=(b\vee c)\wedge z=(b\wedge z)\vee(c\wedge z),\]
so at least one of $b$ and $c$ (but not both) is in $[z,1]$, say $b\in[z,1]$.
Then $a=b\vee(c\vee z)$. Since $a\in\JI([z,1])$ and $a\ne b$, we must have
$c\vee z=a$. We claim that $c$ is join-indecomposable. For if $c=d\vee e$, then
\[a=c\vee z=(d\vee z)\vee(e\vee z).\]
If $d\vee z=a=c\vee z$, then $c=d$ (since $d\wedge z=0=c\wedge z$), a
contradiction. The other case leads to a similar contradiction.

Next we show that $F$ is one-to-one. Suppose that $F(a_1)=F(a_2)$. If
$a_1,a_2\in[z,1]$, then $a_1=a_2$. If $a_1,a_2\notin[z,1]$, then
$a_1\vee z=F(a_1)=F(a_2)=a_2\vee z$; also $a_1\wedge z=0=a_2\wedge z$. By
Lemma~\ref{l:eq}, $a_1=a_2$. So suppose that $a_1\in[z,1]$, $a_2\notin[z,1]$.
Then $a_1=F(a_1)=F(a_2)=a_2\vee z$, contradicting the fact that $a_1$ is join-indecomposable.

Finally we show that $F$ is order-preserving. Suppose that $a_1\le a_2$.
If $a_1,a_2\in[z,1]$, then $F(a_1)=a_1\le a_2=F(a_2)$. If $a_1,a_2\notin[z,1]$,
then $F(a_1)=a_1\vee z\le a_2\vee z=F(a_2)$. We cannot have $a_1\in[z,1]$
and $a_2\notin[z,1]$, since then $z\le a_1\le a_2$ but $z\not\le a_2$. Finally
suppose that $a_1\notin[z,1]$ but $a_2\in[z,1]$, so that $z\le a_2$ and
$a_1\le a_2$, then $F(a_1)=a_1\vee z\le a_2=F(a_2)$.\qed
\end{proof}

Now we turn to the proof of Theorem~\ref{t:embed}. The proof is by induction on
the number of elements in $M$. We take
$\Pi_0$ to be a minimal non-$E$ partition, corresponding to a minimal element
$p\in M$. We decorate things computed in the interval $[\Pi_0,U]$ with bars;
for example, $\bar G^*(q)$ corresponds to the group associated in this lattice
with the element $q\ne p$ (which is not in general the same as $G^*(q)$). Thus
$\bar G$ is the group induced by $G$ on the set of parts of $\Pi_0$, which
is a PB group with associated poset $M\setminus\{p\}$; our induction
hypothesis will imply that the group $\bar G$ is embedded in the generalised
wreath product of the groups $\bar G^*(q)$ for $q\in M\setminus\{p\}$.

Let $\Pi$ be a join-indecomposable partition in $[\Pi_0,U]$, corresponding to the element
$q\in M\setminus\{p\}$. As we saw in the proof of Lemma~\ref{l:ji},
there are two possibilities:
\begin{itemize}
\item \textit{Case 1:} $\Pi$ is join-indecomposable in the lattice $L$ of downsets of $M$. Then $\Pi^-$
is above $\Pi_0$, and so the group $\bar G^*(q)$ is the same as $G^*(q)$.
\item \textit{Case 2:} $\Pi=\Pi_0\vee\Psi$, where $\Psi$ is join-indecomposable in $L$
and $\Pi_0$ is not below $\Psi$. Consider the set $\bar{\mathcal{G}}(q)$, where
the bar denotes that it is computed in the lattice $[\Pi_0,U]$. A partition
$\Phi$ belongs to this set if it is above $\Pi_0$ and satisfies
$\Phi\wedge\Pi=\bar\Pi^{-}$, where again the bar denotes the unique maximal
element below $\Pi$ in $[\Pi_0,U]$. An easy exercise shows that
$\bar\Pi^{-}\wedge\Psi=\Psi^-$; hence
\[\Phi\wedge\Psi=\Psi^-,\]
and so $\Phi$ belongs to $\mathcal{G}(q)$. In other words, we have shown that
\[\bar{\mathcal{G}}(q)\subseteq\mathcal{G}(q).\]
By Lemma~\ref{l:mono}, $\bar G^*(q)$ is canonically embedded in $G^*(q)$.
\end{itemize}
In other words, $\bar G^*(q)\le G^*(q)$ for all $q\in M\setminus\{p\}$.
Now, using the induction hypothesis, the group $\bar G$ induced by $G$ on the
parts of $\Pi_0$ is embedded in the generalised wreath product of the
groups $G^*(q)$ over $q\in M\setminus\{p\}$.

Next, consider the normal subgroup $N_0$ of $G$ which fixes every part of
$\Pi_0$. Because $G$ preserves the poset block structure, $N_0$ is contained
in the automorphsm group of this structure, which is a generalised wreath
product of symmetric groups, by Proposition~\ref{p:pbs}. Hence there is
an equivalence relation on the set of parts of $\Pi_0$ as described in
Theorem~\ref{t:gwp_sdp}; the subgroup of the generalised wreath product
fixing all parts of $\Pi_0$
is a direct product of symmetric groups. Since the stabiliser in $G$
of a part of $\Pi_0$ induces the group $G(p)$ on it, we see that $N_0$ is
actually contained in the direct product of copies of $G(p)$, where the
conditions of Theorem~\ref{t:gwp_sdp} apply to this product. Since
$G(p)\le G^*(p)$, we have that $N_0$ is contained in the stabiliser of the
parts of $\Pi_0$ in the generalised wreath product of the groups $G^*(q)$.
We call this stabiliser $N^*$.

In Theorem~\ref{t:gwp_sdp}, we saw that the generalised wreath product $G^*$ of the groups $G^*(q)$
is the semidirect product $N^*\rtimes H^*$ of this normal subgroup by the generalised wreath product
$H^*$ of the groups $G^*(q)$ for $q\ne p$. Now $G$ has a normal subgroup which
is contained in $N^*$, and a complement which is contained in $H^*$; so $G$
is contained in $G^*$. This completes the proof of Theorem~\ref{t:embed}.\qed

\subsection{Intersections of posets}

If $G_1$ and $G_2$ are permutation groups on $\Omega_1$ and $\Omega_2$
respectively, then $G_1\times G_2$ is a subgroup of $G_1\wr G_2$; indeed,
$G_1\times G_2$ is the intersection of $G_1\wr G_2$ and $G_2\wr G_1$. We
are going to extend this to arbitrary generalised wreath products.

Given a family $(G(i) \leq \Sym(\Omega_i) \, : \, m_i\in M)$ of transitive permutation groups indexed by a set $M$, any partial
order on $M$ gives rise to a generalised wreath product of the groups. So we
have a map from partial orders on $M$ to generalised wreath
products of the groups $G(i)$. In this section, we prove that this
map preserves order and intersections. To explain the terminology, inclusions
and intersections of partial orders on the same sets are given by inclusions
and intersections of the sets of ordered pairs comprising the order relations.
It is easy to show that the intersection of partial orders is a partial order.

\begin{theorem}
Let $(G(i) \leq \Sym(\Omega_i) \, : \, m_i\in M)$ be a family of transitive permutation groups indexed by a set $M$, and let
$\mathcal{M}_1=(M,\sqsubseteq_1)$ and $\mathcal{M}_2=(M,\sqsubseteq_2)$ be two
posets based on $M$. Then
\begin{enumerate}
\item the intersection of the generalised wreath products of the groups over
$\mathcal{M}_1$ and $\mathcal{M}_2$ is the generalised wreath product
over the intersection of $\mathcal{M}_1$ and $\mathcal{M}_2$;
\item if $\mathcal{M}_1$ is included in $\mathcal{M}_2$, then the
generalised wreath product over $\mathcal{M}_1$ is a subgroup of the
generalised wreath product over $\mathcal{M}_2$.
\end{enumerate}
\label{t:poset2gwp}
\end{theorem}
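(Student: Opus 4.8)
The plan is to give an \emph{intrinsic} description of the generalised wreath product over a poset on $M$, as a subgroup of $\Sym(\Omega)$, noting first that $\Omega=\prod_{m_i\in M}\Omega_i$ is the \emph{same} set for both posets: only the function-domains $\Omega^i$, and hence the sets $A(i)$ of strict ancestors, depend on the chosen order. Write $B(i)=A(i)\cup\{i\}$. The first step is a characterisation lemma: a permutation $g$ of $\Omega$ lies in the generalised wreath product over $\mathcal{M}=(M,\sqsubseteq)$ if and only if, for every $i\in[N]$, (i) the coordinate $(\omega g)_i$ depends only on the coordinates $\omega_j$ with $j\in B(i)$, and (ii) for each $\tau\in\Omega^i$ the induced map $\omega_i\mapsto(\omega g)_i$, which is well defined by~(i), belongs to $G(m_i)$. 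The forward implication is immediate from the defining formula $(\omega f)_i=\omega_i(\omega\pi^i f_i)$; for the converse, given such a $g$ one defines $f_i\in F_i$ by letting $\tau f_i$ be the permutation in~(ii), and checks that $f=\prod_i f_i$, an element of the generalised wreath product (which by~\cite{bprs} is exactly $\prod_i F_i$ with this action and is a group of permutations of $\Omega$), acts on $\Omega$ exactly as $g$ does. The point needing attention here is that condition~(ii) builds in the assertion that the induced maps are bijections lying in $G(m_i)$: this is not automatic from $g$ being a permutation, and is exactly where the groups $G(m_i)\le\Sym(\Omega_i)$ enter.

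The combinatorial heart of the theorem is then an elementary observation: if a function on $\prod_j\Omega_j$ depends only on the coordinates in a set $S$ and also only on those in a set $T$, then it depends only on the coordinates in $S\cap T$. Indeed, given two tuples $\omega,\omega'$ that agree on $S\cap T$, form the tuple $\omega''$ agreeing with $\omega$ on $S$ and with $\omega'$ off $S$; then $\omega''$ and $\omega$ agree on $S$, while $\omega''$ and $\omega'$ agree on all of $T$, so the function takes equal values at all three.

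For part~(a), put $\mathcal{M}=\mathcal{M}_1\cap\mathcal{M}_2$; unwinding the definitions shows that its ancestor sets satisfy $A(i)=A_1(i)\cap A_2(i)$, and hence $B(i)=B_1(i)\cap B_2(i)$. Applying the characterisation lemma to $\mathcal{M}_1$, $\mathcal{M}_2$ and $\mathcal{M}$: membership of $g$ in the intersection of the first two products says precisely that, for every $i$, $(\omega g)_i$ depends only on the coordinates in $B_1(i)$ and also only on those in $B_2(i)$, with the induced maps lying in $G(m_i)$ in each case; by the observation above this is equivalent to $(\omega g)_i$ depending only on the coordinates in $B(i)$, and each induced map with respect to $B(i)$ equals one with respect to $B_1(i)$, hence still lies in $G(m_i)$ --- which is membership of $g$ in the product over $\mathcal{M}$. (The reverse passage is trivial, since depending only on $B(i)$ implies depending only on any larger set, with the same induced maps.) This proves~(a). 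Part~(b) is then immediate: if $\mathcal{M}_1$ is included in $\mathcal{M}_2$ then $\mathcal{M}_1\cap\mathcal{M}_2=\mathcal{M}_1$, so by~(a) the product over $\mathcal{M}_1$ equals its own intersection with the product over $\mathcal{M}_2$ and is therefore contained in it; equivalently, one reads it straight off the lemma, since $B_1(i)\subseteq B_2(i)$ makes conditions~(i)--(ii) for $\mathcal{M}_1$ stronger than those for $\mathcal{M}_2$.

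I expect the characterisation lemma to be the only step requiring genuine care --- chiefly the verification that the functions $f_i$ reconstructed from $g$ really lie in $F_i$, namely that the induced maps are well defined and lie in $G(m_i)$, together with the standard fact, quoted from~\cite{bprs}, that $\prod_i F_i$ is a group acting faithfully on $\Omega$ by the stated formula. Everything downstream --- the set-theoretic lemma on intersections of coordinate sets and the bookkeeping identity $A(i)=A_1(i)\cap A_2(i)$ --- is routine. As noted in the introduction, one then obtains as a consequence, by taking $\mathcal{M}_2$ to range over the linear extensions of a poset $\mathcal{M}_1$ and using that $\mathcal{M}_1$ is the intersection of those linear extensions, that a generalised wreath product is the intersection of the iterated wreath products over all linear extensions of its poset.
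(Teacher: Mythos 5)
Your proof is correct and follows essentially the same route as the paper's: both arguments reduce to the identity $A_3(i)=A_1(i)\cap A_2(i)$ together with the observation that a function depending only on the coordinates in $S$ and only on those in $T$ depends only on those in $S\cap T$. The only difference is one of rigour --- your characterisation lemma and the proof of the coordinate-dependence observation make explicit the identification between permutations of $\Omega$ and tuples of component functions that the paper's proof (which simply extends each $f_i$ to a function on all of $\Omega$ and intersects componentwise) leaves implicit.
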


\paragraph{Proof} (a) We first introduce some notation. Let
$\mathcal{M}_3=(M,\sqsubseteq_3)$ be the intersection of the two given posets.
For $t=1,2,3$, and $m_i\in M$, let $A_t(i)$ denote the ancestral set in the
poset $(M,\sqsubseteq_t)$ corresponding to $m_i\in M$: thus
$A_t(i)=\{m_j:m_i\sqsubset_tm_j\}$. Let
$\Omega^{t,i}$ be the product of the sets $\Omega_j$ for $j\in A_t(i)$.

We have permutation groups $G(i)$, acting on sets $\Omega_i$, associated with
the points $m_i\in M$. Our products will act on the set $\Omega$, the
Cartesian product of the sets $\Omega_i$ for $m_i\in M$.

As we have seen, the generalised wreath product over $\mathcal{M}_i$ is a
product of components, where the $i$th component $F_t(i)$ consists of all
functions from $\Omega^{t,i}$ to $G_i$. Since these functions have different
domains, we cannot directly compare them. So we extend the functions in
$F_t(i)$ so that their domain is the whole of $\Omega$, with the proviso
that they do not depend on coordinates outside $\Omega^{t,i}$.

Now we have
\[F_1(i)\cap F_2(i)=F_3(i).\]
For functions in this intersection do not depend on coordinates outside
$\Omega^{1,i}$ or on coordinates outside $\Omega^{2,i}$, and so do not depend
on coordinates outside $\Omega^{1,i}\cap\Omega^{2,i}$. But, from the definition
of the intersection of posets, we have
\[\Omega^{1,i}\cap\Omega^{2,i}=\Omega^{3,i},\]
so $F_1(i)\cap F_2(i)$ is identified with the set of functions from
$\Omega^{3,i}$ to $G(i)$, and the result follows.

Taking the product over all $i$ shows (a).
  
\medskip

(b) If $(M,\sqsubseteq_1)$ is included in $(M,\sqsubseteq_2)$, then
the intersection of these two posets is just the first, and so the
same relation holds for the generalised wreath products, whence the first
is a subgroup of the second. \qed

\medskip

A \emph{linear extension} of a poset $M$ is a total order which includes the
poset. It is a standard result that a poset is the intersection of all its
linear extensions. (If $i$ is below $j$ in the poset, then $i$ is below $j$ in
every linear extension. Conversely, if $i$ and $j$ are incomparable, there is a linear extension in which $i$ is below $j$, and one in which
$j$ is below $i$.)

If $G_i$ is a permutation group on $\Omega_i$ for $i=1,2,\ldots,N$, then the
\emph{iterated wreath product} of these groups is
\[(\cdots(G_1\wr G_2)\wr\cdots\wr G_N).\]
Thus, it is the generalised wreath product of the groups over the standard
linear order on $\{1,2,\ldots,N\}$. (In fact the brackets are not necessary
since the wreath product is associative.)

\begin{cor}
A generalised wreath product of a family of groups over a poset
$(M,\sqsubseteq)$ is equal to the intersection of the iterated wreath products
over all the linear extensions of $(M,\sqsubseteq)$.
\label{c:linext}
\end{cor}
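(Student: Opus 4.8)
The plan is to deduce the corollary directly from Theorem~\ref{t:poset2gwp} together with the standard fact, recalled in the paragraph immediately preceding the corollary, that a finite poset $(M,\sqsubseteq)$ is the intersection of all its linear extensions, where each order is identified with its set of ordered pairs. Since $M$ is finite it has only finitely many linear extensions, say $\sqsubseteq^{(1)},\ldots,\sqsubseteq^{(r)}$, and $\sqsubseteq=\bigcap_{s=1}^{r}\sqsubseteq^{(s)}$ as sets of pairs.

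The first step is to upgrade Theorem~\ref{t:poset2gwp}(a) from two posets to a finite family, by induction on $r$. The cases $r=1$ and $r=2$ are immediate (the latter being the theorem itself). For the inductive step, the intersection $\sqsubseteq^{(1)}\cap\cdots\cap\sqsubseteq^{(r-1)}$ is again a partial order on $M$ (the excerpt notes that an intersection of partial orders is a partial order), so Theorem~\ref{t:poset2gwp}(a) applies to the pair consisting of this intersection and $\sqsubseteq^{(r)}$; combining this with the induction hypothesis gives that the generalised wreath product over $\bigcap_{s=1}^{r}\sqsubseteq^{(s)}$ equals the intersection of the generalised wreath products taken over the individual orders $\sqsubseteq^{(s)}$. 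Throughout one keeps the description of each generalised wreath product as a product of function-space components $F_t(i)$, exactly as in the proof of Theorem~\ref{t:poset2gwp}, so that all the groups in question literally act on the same set $\Omega$ and "intersection" means intersection of subgroups of $\Sym(\Omega)$.

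The second step is the observation, made in the remark just before the corollary, that the generalised wreath product of the family $(G(i))$ over a \emph{linear} order on $M$ is precisely the iterated wreath product of the groups in that order. Putting the pieces together, the generalised wreath product over $\sqsubseteq=\bigcap_{s}\sqsubseteq^{(s)}$ equals $\bigcap_{s}$ (generalised wreath product over $\sqsubseteq^{(s)}$), which equals $\bigcap_{s}$ (iterated wreath product over $\sqsubseteq^{(s)}$), and this is exactly the claimed identity.

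The only point that needs any care — the nearest thing to an obstacle here — is the passage from the two-poset form of Theorem~\ref{t:poset2gwp}(a) to a finite family: one must check at each stage of the induction that the object being intersected with the next linear extension is a genuine poset on the same ground set $M$, so that the theorem is applicable. This is automatic from the finiteness of the set of linear extensions and from the fact that an intersection of partial orders on $M$ is again a partial order on $M$. Everything else is bookkeeping.
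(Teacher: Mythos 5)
Your proof is correct and follows the same route as the paper: the paper's proof simply cites Theorem~\ref{t:poset2gwp} together with the facts (recorded just before the corollary) that a poset is the intersection of its linear extensions and that the generalised wreath product over a linear order is the iterated wreath product. Your only addition is to spell out the routine induction extending Theorem~\ref{t:poset2gwp}(a) from two posets to finitely many, which the paper leaves implicit.
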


This is immediate from Theorem~\ref{t:poset2gwp} and the comments before the
corollary.

\section{Miscellanea}
\label{s:misc}

\subsection{Computing questions}
\label{s:comput}

As we did for pre-primitivity in \cite{a-mcs}, it would be good to go through
the list of small transitive groups to see how many have the OB property.
Here are some thoughts.

A permutation group $G$ on $\Omega$ is \emph{$2$-closed} if every permutation
which preserves every $G$-orbit on $2$-sets belongs to $G$. The
\emph{$2$-closure} is the smallest $2$-closed group containing $G$, and
consists of all permutations of $\Omega$ which preserve all $G$-orbits on
$\Omega^2$.

\begin{prop}
A transitive permutation group has the OB property if and only if its
$2$-closure does.
\end{prop}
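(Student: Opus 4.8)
The plan is to show that $G$ and its $2$-closure $G^{(2)}$ have exactly the same invariant partitions, and then to observe that the OB property is a property of this lattice of partitions alone, so it holds for one group if and only if it holds for the other. First note that, since $G\le G^{(2)}$, the group $G^{(2)}$ is again transitive, so the OB property is meaningful for it.

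The key step is the claim that a partition $\Pi$ of $\Omega$ is $G$-invariant if and only if it is $G^{(2)}$-invariant. Translating to equivalence relations, $\Pi$ is $G$-invariant exactly when the associated relation $R_\Pi\subseteq\Omega^2$ is $G$-invariant, which happens exactly when $R_\Pi$ is a union of $G$-orbits on $\Omega^2$. If this holds, then every element of $G^{(2)}$ preserves each $G$-orbit on $\Omega^2$ by definition, hence preserves their union $R_\Pi$, so $\Pi$ is $G^{(2)}$-invariant. Conversely every $G^{(2)}$-invariant partition is $G$-invariant because $G\le G^{(2)}$. Thus the lattice of $G$-invariant partitions coincides with the lattice of $G^{(2)}$-invariant partitions.

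Finally, recall that a transitive group $H$ has the OB property precisely when any two $H$-invariant partitions commute as equivalence relations: conditions (a)--(c) in the definition of an orthogonal block structure are automatic for the lattice of invariant partitions of a transitive group, so only condition (d) is at issue. Commuting of two equivalence relations, $R_1\circ R_2=R_2\circ R_1$, is a statement about the relations themselves and does not mention any group. Since $G$ and $G^{(2)}$ have the same invariant partitions, $G$ has the OB property if and only if $G^{(2)}$ does. I do not expect a genuine obstacle here; the only point requiring a little care is the first equivalence above, namely that $\Pi$ is $G$-invariant if and only if $R_\Pi$ is a union of $G$-orbits on $\Omega^2$, and this is immediate once partitions are identified with equivalence relations.
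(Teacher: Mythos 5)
Your proposal is correct and follows essentially the same route as the paper, which simply notes that $G$ and its $2$-closure preserve the same binary relations, hence the same equivalence relations, and that the OB property depends only on these. Your write-up just spells out in more detail the observation that an invariant partition corresponds to a union of $G$-orbits on $\Omega^2$, so no genuinely different idea is involved.
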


For the group and its $2$-closure preserve the same binary relations, and in
particular the same equivalence relations.

So we can simplify the computation by first filtering out the $2$-closed 
groups and testing these. The computer algebra system \textsf{GAP}~\cite{gap}
has a \texttt{TwoClosure} function.

Also, \textsf{GAP} has a function \texttt{AllBlocks}. Using this we can compute
representatives of the blocks of imprimitivity and test the permuting property.
We find, for example, that only one of the transitive groups of degree~$8$ (the
dihedral group acting regularly) fails the OB property.

Table~\ref{t:nos} is a table corresponding to the one in~\cite{a-mcs}. This
gives the numbers of transitive groups of degree $n$ and the numbers with the
OB and PP properties (where PP is pre-primitivity). In the cases where OB holds
we should determine which ones give rise to isomorphic orthogonal block
structures.

\begin{table}[htbp]
\[\begin{array}{|c|rrr|}
\hline
n & \hbox{Trans} & \hbox{OB} & \hbox{PP}\\
\hline
10 & 45 & 44 & 42 \\
11 & 8 & 8 & 8 \\
12 & 301 & 285 & 276 \\
13 & 9 & 9 & 9 \\
14 & 63 & 62 & 59 \\
15 & 104 & 104 & 102 \\
16 & 1954 & 1886 & 1833 \\
17 & 10 & 10 & 10 \\
18 & 983 & 922 & 900 \\
19 & 8 & 8 & 8 \\
20 & 1117 & 1100 & 1019 \\
\hline
\end{array}\]
\caption{\label{t:nos}Numbers of transitive, OB, and pre-primitive groups}
\end{table}

Here is another approach. Taking both approaches would be a useful check on
the correctness of the computations. This uses the fact that an orthogonal
block structure gives rise to an association scheme. 

Hanaki and Miyamoto~\cite{hm} have a web page listing the association schemes
on small numbers of points. (By ``association scheme'' they mean a homogeneous
coherent configuration, which is more general than the definition in 
\cite{bailey:as}.) Now we should check which association schemes come from
orthogonal block structures, and which of these have transitive automorphism
groups.

In fact, there is a \textsf{GAP} package~\cite{as_gap} by Bamberg, Hanaki and Lansdown
which can be used to check isomorphism. Using this package, we could add a
column to the above table giving the number of different association schemes
which result (and identifying them in the Hanaki--Miyamoto tables).

\subsection{Some problems}

\qquad 1. Under what conditions is the generalised wreath product of OB groups OB?

2. In \cite{diag}, the \emph{diagonal group} $D(G,n)$ is defined for any group
$G$ and positive integer $n$, and the conditions for this group to be primitive
are determined. For which $G$ and $n$ does $D(G,n)$ have the OB property?
the PB property?

\medskip

3. In \cite{a-mcs}, the set of natural numbers $n$ for which every transitive
group of degree~$n$ is pre-primitive was considered. We can ask the analogous
question for the OB property. As we saw, there are examples of products of two
primes which are in the second set but not the first, such as $15$.

\paragraph{Conjecture} If $p$ and $q$ are primes with $p>q$ and $q\nmid p-1$,
then every transitive group of degree $pq$ has the OB property.

As well as $15$, this is true for degrees $33$ and $35$.

\medskip

4. In \cite{bailey:as} it is explained how, given an orthogonal block structure
on $\Omega$, the vector space $\mathbb{R}^\Omega$ can be decomposed into
pairwise orthogonal subspaces (called \emph{strata} in the statistical 
literature). If the group $G$ has the OB property, it preserves the subspaces
in this decomposition. When does it happen that some or all of the subspaces
are irreducible as $G$-modules?

More generally, what information does the permutation character give about
groups with the OB property?

\medskip

5. A topic worth considering is the extensions of the groups considered in this
paper by groups of lattice automorphisms, as suggested at the end of
Section~\ref{s:unify}.

\medskip

6. It would be interesting to know more about transitive groups which do not
have the OB property. How common are they? Are similar techniques useful in
their study?


\begin{thebibliography}{99}

\bibitem{a-mcs}
Marina Anagnostopoulou-Merkouri, Peter J. Cameron and Enoch Suleiman,
Pre-primitive permutation groups,
\textit{J. Algebra} \textbf{636} (2023), 695--715.

\bibitem{Geelong}
  R.~A.~Bailey,
  Distributive block structures and their automorphisms.
  In \textit{Combinatorial Mathematics VIII} (ed.\ K.~L.~McAvaney),
  Lecture Notes in Mathematics \textbf{884}, pp.~115--124,
  Springer-Verlag, Berlin, 1981. 

\bibitem{rab:strat}
R.~A.~Bailey,
Strata for randomized experiments,
\textit{J. Royal Statist. Soc.} (B) \textbf{53} (1991), 27--78.

\bibitem{DCC}
R.~A.~Bailey,
Orthogonal partitions in designed experiments.
\textit{Designs, Codes and Cryptography}
\textbf{8} (1996), 45--77.

\bibitem{bailey:as}
R. A. Bailey,
\textit{Association Schemes: Designed Experiments, Algebra and Combinatorics},
Cambridge Studies in Advanced Mathematics \textbf{84},
Cambridge University Press, Cambridge, 2004.

\bibitem{RABHist}
  R.~A.~Bailey,
  Latin squares at Rothamsted in the time of Fisher and Yates.
  In \textit{Research in History and Philosophy of Mathematics,
    Joint Meeting of the British Society for the History of Mathematics and
    the Canadian Society for the History and Philosophy of Mathematics},
  \textit{Annals of the Canadian Society for History and Philosophy of Mathematics},
  2021 Volume (eds.\ Maria Zack and David Waszek), pp.~213–233,
  Birkh\"auser Cham, 2023.
  
\bibitem{diag}
R. A. Bailey,  Peter J. Cameron, Cheryl E. Praeger and Csaba Schneider,
The geometry of diagonal groups,
\textit{Trans. Amer. Math. Soc.} \textbf{375} (2022), 5259--5311.

\bibitem{J}
  R.~A.~Bailey and D.~Jungnickel,
  Translation nets and fixed-point-free group automorphisms.
  \textit{Journal of Combinatorial Theory, Series A} \textbf{55} (1990),
  1--13.
  
\bibitem{bprs}
R. A. Bailey, Cheryl E. Praeger, C. A. Rowley and T. P. Speed,
Generalized wreath products of permutation groups,
\textit{Proc. London Math. Soc.} \textbf{47} (1983), 69--82.

\bibitem{as_gap}
J. Bamberg, A. Hanaki and J. Lansdown, J.,
\texttt{AssociationSchemes}, A \textsf{GAP} package for working with
association schemes and homogeneous coherent configurations,
Version 3.1.0 (2024) (\textsf{GAP} package),
\url{http://www.jesselansdown.com/AssociationSchemes} 


\bibitem{Par}
Peter~J.~Cameron,
  \textit{Parallelisms of Complete Designs}.
Cambridge University Press, Cambridge, 1976.

\bibitem{CTTA}
  Peter J.~Cameron,
\textit{Combinatorics: Topics, Techniques, Algorithms}.
Cambridge University Press, Cambridge, 1994.

\bibitem{cap}
Peter J. Cameron,
Coherent configurations, association schemes, and permutation groups,
in \textit{Groups, Combinatorics and Geometry} (ed. A. A. Ivanov,
M. W. Liebeck and J. Saxl), pp. 55--71, World Scientific, Singapore, 2003.

\bibitem{PJCarx}
  Peter J.~Cameron,
  Asymmetric Latin squares, Steiner triple systems, and edge parallelisms.
\url{https://arxiv.org/abs/1507.02190}

\bibitem{cd}
Laxmi M. Chataut and Martyn R. Dixon,
Groups with the weak minimal condition on non-permutable subgroups,
in \textit{Infinite Group Theory: From the Past to the Future} (ed.
Paul Baginski, Benjamin Fine and Anthony M. Gaglioni), pp. 1--18,
World Scientific, Hackensack, NJ, USA, 2018.

\bibitem{dp}
B. A. Davey and H. A. Priestley,
\textit{Introduction to Lattices and Order},
Cambridge University Press, Cambridge, 1990.

\bibitem{dm}
J. D. Dixon and B. Mortimer,
\textit{Permutation Groups},
Graduate Texts in Mathematics \textbf{163}, Springer, 1996.

\bibitem{ddj}
P. Dubreil and M.-L. Dubreil-Jacotin,
Th\'eorie alg\'ebrique des relations d'\'equivalence, 
\textit{J. Math. Pures Appl. (Liouville)} \textbf{18} (1939), 63--95.

\bibitem{Rota1996}
D. Finberg, M. Mainetti and G.-C. Rota,
The logic of commuting equivalence relations.
\textit{Logic and algebra (Pontignano, 1994)}, 69--96,
Lecture Notes in Pure and Appl. Math. 180, Dekker, New York, 1996.

\bibitem{gap}
The \textsf{GAP} Group,
\textsf{GAP} -- Groups, Algorithms, and Programming, Version 4.13.1; 2024. (\url{https://www.gap-system.org})

\bibitem{hm}
A. Hanaki and I. Miyamoto,
Classification of association schemes with small vertices,
\url{http://math.shinshu-u.ac.jp/~hanaki/as/}

\bibitem{Holl}
W.~C.~Holland,
  The characterization of generalized wreath products.
  \textit{Journal of Algebra} \textbf{13} (1969), 152--172.

\bibitem{HS}
  A.~M.~Houtman and T.~P.~Speed,
  Balance in designed experiments with orthogonal block structure.
  \textit{Annals of Statistics} \textbf{4} (1983), 1069--1085.

\bibitem{humphreys}
James M. Humphreys,
\textit{Linear Algebraic Groups},
Graduate Texts in Mathematics \textbf{21}, Springer-Verlag, 
Berlin--Heidelberg--New York, 1975.
  
\bibitem{iwasawa}
K. Iwasawa,
\"Uber die endlichen Gruppen und die Verb\"ande ihrer Untergruppen,
\textit{J. Fac. Sci. Imp. Univ. Tokyo}, Sect. I, \textbf{4} (1941), 171--199.

\bibitem{ARL}
 O.~Kempthorne, G.~Zyskind, S.~Addelman, T.~N.~Throckmorton and
  R.~F.~White, 
  \textit{Analysis of Variance Procedures}.
  Aeronautical Research Laboratory, Ohio, Report 149 (1961).

\bibitem{kk}
Marc Krasner and Leo Kaloujnine,
Produit complet des groupes de permutations et problème d’extension de groupes,
I, \textit{Acta Sci. Math. Szeged} \textbf{13} (1950), 208--230;
II, ibid. \textbf{14} (1951), 39--66; III, ibid. \textbf{14} (1951), 69--82. 

\bibitem{MWan}
  B.~D.~McKay and I.~M.~Wanless,
  On the number of Latin squares.
  \textit{Annals of Combinatorics} \textbf{9} (2005), 335--344.
  
\bibitem{JAN1}
J.~A.~Nelder,
    The analysis of randomized experiments with orthogonal block
structure. I. Block structure and the null analysis of variance.
\textit{Proc.\ Roy.\ Soc.\ Lond.\ Ser.\ A} \textbf{283} (1965), 147--162.
\bibitem{JAN2}
J.~A.~Nelder,
    The analysis of randomized experiments with orthogonal block
structure. II. Treatment structure and the general analysis of variance.
\textit{Proc.\ Roy.\ Soc.\ Lond.\ Ser.\ A}
\textbf{283} (1965), 163--178.

\bibitem{schmidt}
Roland Schmidt,
\textit{Subgroup Lattices of Groups},
Expositions in Mathematics \textbf{14}, Walter de Gruyter, 1994.
\bibitem{scott}
Leonard L. Scott,
Representations in characteristic~$p$,
\textit{Proc. Symp. Pure Math.} \textbf{37} (1980), 319--331.
\bibitem{Sill}
H.~K.~Silcock,
  Generalized wreath products and the lattice of normal subgroups of a group.
  \textit{Algebra Universalis} \textbf{7} (1977), 361--372.
\bibitem{TPSRAB}
 T.~P.~ Speed and R.~A.~Bailey,
  On a class of association schemes derived from lattices of equivalence relations.
  In \textit{Algebraic Structures and Applications} (eds.\ P.~Schultz, C.~E.~Praeger and
  R.~P.~Sullivan), pp. 55--74, Marcel Dekker, New York, 1982. 
\bibitem{SB2}
T.~P. Speed and R.~A.~Bailey,
  Factorial dispersion models.
  \textit{Inter\-national Statistical Review} \textbf{55} (1987), 261--277.
  \bibitem{Throck}
T.~N.~Throckmorton,
  \textit{Structures of classification data}.
  Ph.D.\ Thesis, Iowa State University, 1961.
\bibitem{Well}
 C.~Wells,
  Some applications of the wreath product construction.
  \textit{Amer.\ Math.\ Monthly} \textbf{83} (1976), 317--338. 

\bibitem{yan}
Catherine Huafei Yan,
Distributive laws for commuting equivalence relations,
\textit{Discrete Math.} \textbf{181} (1998), 295--298.
 
\bibitem{FYCE}
  F.~Yates, 
  Complex experiments.
  \textit{Supplement to the Journal of the Royal Statistical Society} \textbf{II} (1935),
  181--247.
  \bibitem{Zys}
G.~Zyskind,
  \textit{Error structures in experimental designs}.
  Ph.D.\ Thesis, Iowa State University, 1958.
\bibitem{Zys2}
G.~Zyskind,
  On structure, relation, $\Sigma$, and expectation of mean squares.
  \textit{Sankhy\=a A} \textbf{24} (1962), 115--148.
\end{thebibliography}
\end{document}